\theoremstyle{definition}
\newtheorem{definition}{Definition}
\theoremstyle{theorem}
\newtheorem{proposition}[definition]{Proposition}
\newtheorem{corollary}[definition]{Corollary}
\newtheorem{lemma}[definition]{Lemma}
\newtheorem{theorem}[definition]{Theorem}
\newtheorem{assumption}[definition]{Assumption}
\newtheorem{example}[definition]{Example}
\numberwithin{equation}{section}
\numberwithin{definition}{section}
\theoremstyle{remark}
\newtheorem{remark}[definition]{Remark}
\newtheorem{qn}[definition]{Question}
\def\bip{\null}
\def\half{\frac{1}{2}}
\def\defto{{\eqdef }}
\def\esssup{\mathrm{esssup}}
\def\essinf{\mathrm{essinf}}
\def\PPesssup{\PP\mbox{-}\esssup}
\def\PP{\mathsf P}
\def\P{\mathbb P}
\def\QQ{\mathsf Q}
\def\Q{\mathbb Q}
\def\AA{\mathcal A}
\def\EE{\mathsf E}
\def\EEE{\mathcal E}
\def\E{\EE}
\def\RR{\mathbb R}
\def\R{\RR}
\def\Epsilon{\mathcal E}
\def\EEc{\EE^{\PP^c}}
\def\FF{\mathcal F}
\def\NN{\mathcal N}
\def\HH{\mathcal H}
\def\CC{\mathbf C}
\def\defto{\buildrel {\mathrm{def}}\over =}
\def\lto{\buildrel {L^1}\over \rightarrow}
\def\eqdef{\defto}
\def\nto{\buildrel {n\rightarrow\infty}\over \rightarrow}
\def\GG{\mathcal G}
\def\G{\GG}
\def\S{S}
\def\SS{\mathcal{S}}
\def\BB{\mathcal{B}}
\def\TT{\mathcal{T}}
\def\DD{\mathcal D}
\def\LL{\mathcal L}
\def\Im{\mathrm{Im}}
\def\Gtimes{\mathbf{G}}
\def\bG{\Gtimes}
\def\Exp{\mathrm{Exp}}
\def\Unif{\mathrm{Unif}}
\def\stop{{W}}
\def\1{\mathbbm{1}}
\newcommand\xqed[1]{%
  \leavevmode\unskip\penalty9999 \hbox{}\nobreak\hfill
  \quad\hbox{#1}}
\newcommand\finish{\xqed{$\diamond$}}
\def\esssup{\mathrm{esssup}}
\def\essinf{\mathrm{essinf}}
\def\PPesssup{\PP\mbox{-}\esssup}
\def\PP{\mathsf P}
\def\P{\mathbb P}
\def\QQ{\mathsf Q}
\def\AA{\mathcal A}
\def\EE{\mathsf E}
\def\RR{\mathbb R}
\def\Epsilon{\mathcal E}
\def\EEc{\EE^{\PP^c}}
\def\FF{\mathcal F}
\def\HHH{\mathcal R}
\def\NN{\mathcal N}
\def\HH{\mathcal H}
\def\CC{\mathbf C}
\def\GG{\mathcal G}
\def\SS{\mathcal{S}}
\def\BB{\mathcal{B}}
\def\TT{\mathcal{T}}
\def\DD{\mathcal D}
\def\LL{\mathcal L}
\def\Im{\mathrm{Im}}
\def\Gtimes{\mathbf{G}}
\def\Exp{\mathrm{Exp}}
\def\Unif{\mathrm{Unif}}
\def\id{\mathrm{id}}
\newsavebox\MBox
\begin{document}
\title{On the informational structure in optimal dynamic stochastic control}
\author{Saul Jacka}
\address{Department of Statistics, University of Warwick, UK}
\address{The Alan Turing Institute, London, UK}
\email{s.d.jacka@warwick.ac.uk}
\author{Matija Vidmar}
\address{Department of Mathematics, University of Ljubljana, Slovenia}
\address{Institute for Mathematics, Physics and Mechanics, Ljubljana, Slovenia}
\email{matija.vidmar@fmf.uni-lj.si}
\begin{abstract}
We formulate a very general framework for optimal dynamic stochastic control problems which allows for a control-dependent informational structure. The issue of informational consistency is investigated. Bellman's principle is formulated and proved. In a series of related results, we expound on the informational structure in the context of (completed) natural filtrations of stochastic processes.
\end{abstract}
\thanks{This research was initiated while MV was a PhD student in the Department of Statistics at the University of Warwick, and a recipient of a scholarship from the Slovene Human Resources Development and Scholarship Fund (contract number 11010-543/2011). MV acknowledges this latter support and also that of the Slovenian Research Agency (research core funding No. P1-0222); he thanks the hospitality of the former. We thank Jon Warren, for many helpful discussions on some of the material from section~\ref{appendix:stopped} of this paper.}

\keywords{Optimal dynamic stochastic control; informational structure; stopped (completed) natural filtrations and (completed) natural filtrations at stopping times; Bellman's principle.}

\subjclass[2010]{Primary: 93E20; Secondary: 60G05, 60A10, 28A05}

\maketitle

\section{Introduction}
\subsection{Motivation and overview of results}
Optimal dynamic stochastic control, is \emph{stochastic}, in the sense that the output of the system is random; \emph{optimal} in that the goal is optimization of some expectation; and it is  \emph{dynamic}, in that the control is adapted to the current and past state of the system. In general, however, the controller in a dynamic stochastic control problem can observe only some of the information which is being accumulated; and what is observed may vary,  depending on the chosen control. 

It is reasonable, therefore, to insist that all admissible controls (when regarded as processes) are adapted (or even predictable with respect) to a \emph{control-dependent-information-flow} that is being acquired by the controller.

Some (informal) examples follow.

\begin{example}\emph{Job hunting}. Consider an employee trying to optimize her choice of employer. The decision of whether or not, and where to move, will be based on the knowledge of the qualities of the various potential employers, this knowledge itself depending on her previous employments. 
\end{example}
\begin{example}\emph{Quality control in a widget production line}. A fault with the line may cause all the widgets from some time onwards to be faulty. Once this has been detected, the line can be fixed and the production of functioning widgets restored. However, only the condition of those widgets which are taken out of the production line, and tested, can actually be observed. A cost is associated with this process of testing. Conversely, rewards accrue from producing functioning widgets.  
\end{example}
\begin{example}\emph{Tanaka's SDE}.  From the theory of controlled SDEs, a classic example of \emph{loss of information} is Tanaka's SDE: let $X$ be a Brownian motion and $W_t\eqdef \int_0^t\mathrm{sgn}(X_s)dX_s$, $t\in [0,\infty)$. Then the completed natural filtration of $W$ is strictly included in the completed natural filtration of $X$ \cite[p. 302]{karatzasshreve}. 
\end{example}
\begin{example}\emph{Bandit models}.  The class of \emph{bandit} models is well-known (see the excellent literature overview in \cite{fryer}). These are ``sequential decision problems where, at each stage, a resource like time, effort or money has to be allocated strategically between several options, referred to as the arms of the bandit \ldots The key idea in this class of models is that agents face a trade-off between experimentation (gathering information on the returns to each arm) and exploitation (choosing the arm with the highest expected value).'' \cite[p. 2]{fryer}. 
\end{example}
Other situations in which the control is non-trivially, and naturally, adapted or predictable with respect to an information-flow, which it both influences and helps generate, abound; for example, controlling the movement of a probe in a stochastic field, only the local values of the field being observable  (see Example~\ref{ex:cts_field}); a similar situation for movement on a random graph, the controller only being able to observe the values attached to the vertices visited; additively controlling a process, but observing the sum of the process itself and of the control.

Analysis of dynamic stochastic control problems is vastly facilitated by the application of Bellman's principle. Informally this states that (i) the best control is to behave optimally now, conditionally on behaving optimally in the future, and that (ii) the longer one pursues a fixed, but arbitrary, policy before behaving optimally the worse one's payoff from the control problem.

Failure to reflect the control-dependent informational flow explicitly in the filtration structure, may result in Bellman's super(martingale) principle not being valid. This is exemplified in the following:

\begin{example}
\emph{Box picking.}  Let $\Omega\eqdef \{0,1\}\times\{-1,1\}$, endow it with the discrete $\sigma$-field $\HH\eqdef 2^\Omega$ and the probability measure $\PP$, given by $\PP(\{(0,-1)\})=1/6$, $\PP(\{(0,1)\})=1/3$, $\PP(\{(1,-1)\})=1/3$, $\PP(\{(1,1)\})=1/6$.  
Let $Y_1$ and $Y_2$ be the projections onto the first and second coordinate, 
respectively; set $X_0\eqdef 0$.  The time set is $\{0,1,2\}$. Note that $Y_1$ stochastically dominates $Y_2$ and that on $\{Y_1=1\}$, $\{Y_2\leq Y_1\}$, whilst conditionally on $\{Y_1=0\}$, $Y_2$ has a strictly positive conditional expectation. 

The set of admissible controls are processes $c=(c_i)_{i=0}^2$ such that $c_0$ is 0, while $c_1$ and $c_2$ are $\{1,2\}$-valued with $c$ predictable with respect to the natural filtration of the controlled process $X^c$ which is given by $X^c_t\defto Y_{c_t}$ for $t\in \{0,1,2\}$. We can think of  being allowed to open sequentially either of two boxes containing rewards.
 
So, we decide on index $1$ or $2$ and then after observing the corresponding $Y_1$ or $Y_2$, decide again on $1$ or $2$. The objective functional, whose expectation is to be maximized, is $J(c)\eqdef X^c_1+X^{c}_2$ for admissible controls $c$. 

For an admissible control $c$ and $t\in \{0,1,2\}$, let $D(c,t)$ be the set of admissible controls that agree with $c$ up to, and inclusive of, time $t$. 

It is then  elementary to verify that:

 \textbf{(1)} The control $c^*$ given by $c^*_0=0$, $c^*_1=1$, $c^*_2=2\1_{\{X_1=0\}}+\1_{\{X_1=1\}}$, is the unique optimal control. 

 \textbf{(2)} Setting $\FF_0\eqdef \{\emptyset,\Omega\}$ and $\FF_1\eqdef \FF_2\eqdef 2^\Omega$, $\FF=(\FF_t)_{t=0}^2$ is the smallest filtration with respect to which all the $X^c$ (as $c$ runs over the admissible controls) are adapted. The ``\textbf{classical Bellman process}" $W^*=(W^*_t)_{t=0}^2$ associated to this filtration, i.e. the process
$$W^*_t\eqdef \sup_{d\in D(c^*,t)}\EE^\PP[J(d)\vert \FF_t]\text{ for }t\in \{0,1,2\},
$$
\textbf{is not an $\FF$-supermartingale}--contradicting both parts of Bellman's principle.

 \textbf{(3)} Letting $\GG^{c}$ be the natural filtration of $X^{c}$, the 
``\textbf{informationally-consistent Bellman process}'' $\hat V^c=(\hat V^c_t)_{t=0}^2$ 
associated to this filtration, i.e. the process
$$
\hat V^c_t\eqdef \sup_{d\in D(c,t)}\EE^\PP[J(d)\vert \GG^{c}_t]\text{ for }t\in \{0,1,2\},
$$
 \textbf{is a $\GG^{c}$-supermartingale} for each admissible control $c$--confirming that this version satisfies the second part of Bellman's principle; and $ \hat V^{c^*}$ \textbf{is a $\GG^{c^*}$-martingale}--confirming that this version satisfies the first part of Bellman's principle. 

Item \textbf{(3)} also follows from  Theorems~\ref{prop:lattice} and~\ref{Bell1}  below.\finish
\end{example}
So, one should like a \emph{general} framework for stochastic control,  equipped with a suitable abstract version of Bellman's principle, which makes the  control-dependent informational flows explicit and inherent in its machinery. The circularity of the requirement that the controls are adapted to an informational structure which they themselves help generate, makes this a delicate point. 

In this paper, then, 
we describe a general (single controller, perfect recall)  stochastic control framework, which explicitly allows for a control-dependent informational flow, and  provide  (under a technical 
condition, Assumption~\ref{assumption:lattice}), a fully general, abstract version of Bellman's principle. This is the content of sections 3-6. 
Specifically, section~\ref{section:setting} formally 
defines a system of stochastic control (in which observed information is an explicit function of control); 
section~\ref{section:cond_payoff_sytem} discusses its conditional payoff and `Bellman' system; 
section~\ref{section:bellman} formulates the relevant version of Bellman's principle -- Theorem~\ref{Bell1} is our 
main result. We should emphasise that for this part of the paper we follow on from the approach of El Karoui \cite{elkaroui}.

Section~\ref{section:formal_example} contains a detailed solution of a concrete class of  examples in some reasonably straightforward cases, illustrating some of the main ideas of this paper. Several other examples and counterexamples are also given along the way. 

A crucial requirement for the programme outlined above to be successful is that of informational 
consistency over controls (see Definition~\ref{assumption:temporalconsistency}): if two controls agree 
up to a certain time, then what we have observed up to that time should also agree. Especially at the 
level of stopping times, this becomes a non-trivial statement -- for example, when the 
observed information is that generated by a controlled process, which is often the case. We explore this issue of informational consistency in the context of (completed) natural filtrations of processes in 
section~\ref{appendix:stopped}. Specifically, we consider there the following 
question, which is interesting in its own right: 
\begin{qn}\label{FiltQ} Suppose that $X$ and $Y$ are two processes, 
that $S$ is a stopping time of one or both of their (possibly completed) natural filtrations; and that the stopped 
processes $X^S$ and $Y^S$ agree (possibly only with probability one): must the two (completed) natural 
filtrations also agree \emph{at} the time $S$? 
\end{qn}
The answer to this question is non-trivial in a
continuous-time setting, and several related findings are obtained along the way (see the 
introductory remarks to section~\ref{appendix:stopped}, on p.~\pageref{appendix:stopped}, for a more 
detailed account). The main result here is Theorem~\ref{theorem:galmarino}. 

\subsection{Literature overview}
The phenomenon  of control-dependent information has, by now, entered and been studied in the stochastic control literature in and through numerous more or less \emph{specific} situations and problems, see e.g. \cite{elkaroui_jeanblanc,fryer,fleming_pardoux,wonham,davis-varaiya,varaiya} \cite[Chapter~8]{bensoussan} \cite[Sections~VI.10-11]{fleming-rishel} \cite[Subsection~2.7.6]{yong} \cite[Section~1.4]{gihman} \cite[Sections~VI.2-3]{alain} and the references therein --- the focus having been, for the most part, on reducing the original control problem, which is based on partial control-dependent observation, to an associated `separated' problem, which is based on complete observation. 

When it comes to \emph{general} frameworks for dynamic stochastic control, however, thus far, by and large, only a single, non-control dependent, observable \cite{elkaroui} informational flow \cite{striebel} \cite[Sections~2 and~3]{soner} appears to have been allowed.  Two exceptions to this were pointed out to us after this work was essentially completed. The first is the monograph of Y{\"u}ksel and Ba\c{s}ar \cite{yuksel}. It contains a framework for networked stochastic optimization in discrete time under information constraints with multiple agents and non-perfect recall \cite[Section~2.4]{yuksel}. The monograph provides an in-depth analysis of the information structures for stochastic teams \cite[Chapter~3]{yuksel}, but it does not (for its non-perfect-recall and multiple-agent generality, presumably even cannot) offer an explicit general (Bellman (super)martingale) optimality principle. The second is the paper by Rishel \cite{rishel}. Therein the author considers a stochastic control problem involving a general two-component controlled process $x_u=(y_u,z_u)$ ($u$ being the control), only the component $y_u$ being observable (and this at certain exogenously given deterministic time instances up to an endogenous finite random lifetime $\eta_u$). The payoff is the integral $\int_0^{\eta_u}k_u(s)ds$, with $k_u$ adapted to $x_u$ and satisfying certain moment integrability assumptions. The validity of  a principle of optimality \cite[Eq.~(28)]{rishel} (at deterministic times) is proved under a `relative completeness assumption' \cite[Eq.~(32)]{rishel} (cf. Assumption~\ref{assumption:lattice}). 

Our work complements and extends \cite{yuksel,rishel} in several important directions: by contrast to \cite{yuksel} (i)  Bellman's principle \emph{is} formulated and proved in the control-dependent filtration setting, both in discrete and continuous time; further, and by contrast to \cite{rishel}, this is done (ii) using the clear and appealing notion of a (super)martingale system (cf. \cite[Eq.~(28)]{rishel}); with (iii) the (super)martingale property  stated not only for deterministic times, but also for (what are extensions to the control-dependent setting of the concept of) stopping times; finally (iv) the setting of \cite{rishel} is considerably generalised and its 
assumptions are weakened.

As concerns  Question~\ref{FiltQ}, Theorem~\ref{theorem:galmarino} provides a generalization of a part of Galmarino's test, available in the literature for coordinate 
processes on canonical spaces \cite[p. 149, Theorem~IV.100]{dellacheriemeyer} \cite[p. 320, 
Lemma~4.18]{karatzasshreve}, and extended here to a non-canonical setting. 
 In particular, in existing work one finds argued, under reasonably innocuous conditions (e.g. \cite[p. 9, 
Proposition~2.18]{karatzasshreve}), that the $\sigma$-field generated by the stopped process is 
included in the history of the natural filtration of the process up to that stopping time. The opposite 
inclusion, has (to the best of our knowledge) until now only been established for coordinate 
processes on canonical spaces \cite[p.~33, Lemma~1.3.3]{varadhan} \cite{blumenthal-getoor} (the result in \cite{blumenthal-getoor} is with completions, in a strong Markov context) or, slightly more generally, under the condition that all the stopped paths of the process are already paths of the process \cite[Paragraph 1.1.3, p.~10, Theorem~6]{shiryaev}. We show in Theorem~\ref{theorem:galmarino} that it holds true far more generally.

\subsection{A convention}\label{subsection:convention}
As stated earlier, we will provide and analyze a framework for optimal dynamic stochastic control in which information is explicitly control-dependent. The informational flow  is modeled using filtrations, and this can be done in one of the following two, essentially different, ways: 
\begin{enumerate}
 \item Dealing with events `with certainty', irrespective of the presence of probability.
 \item Dealing with events `up to a.s. equality', insisting that the filtrations be complete relative to the underlying probability measure(s).
\end{enumerate}
\emph{In sections~\ref{section:setting}--\ref{section:bellman}} we will develop the second `probabilistic' approach -- of complete filtrations -- \emph{in parallel}  to the first -- `measure-theoretic' -- setting. The formal differences between the two approaches are minor. For the most part one has merely to add, in the `complete' setting, a number of a.s. qualifiers. We will put these, and any other eventual differences of the second approach as compared to the first, in \{\} braces. This will enforce a strict separation between the two settings, while allowing us to repeat ourselves as little as possible.

\subsection{Some general notation}
Throughout this paper, for a probability measure $\PP$ on $\Omega$ and 
$A\subset \Omega$, a property will be said to hold $\PP$-a.s. on $A$, if the set 
of $\omega\in A$ for which the property does not hold is first measurable (i.e. belongs to the domain of 
$\PP$), and second is of $\PP$-measure zero. When $A=\Omega$, we shall just say that the 
property holds $\PP$-a.s. 

For $\LL$ a collection of subsets of $\Omega$, 
\begin{enumerate}
\item $\sigma(\LL)=\sigma_\Omega(\LL)$ denotes the $\sigma$-field generated by $\LL$ (on $\Omega$); 
\item if  $A\subset \Omega$, 
$\LL\vert_A\eqdef \{L\cap A: L\in \LL\}$ denotes
 the trace of $\LL$ on $A$.
\suspend{enumerate}

Finally,
\resume{enumerate}
\item given a map $f$ from $\Omega$ into some measurable space $(E,\Epsilon)$, $\sigma(f)\eqdef \{f^{-1}(A):A\in \Epsilon\}$ is the $\sigma$-field generated by $f$ (the space  $(E,\Epsilon)$ being understood from context);
\item  $2^X$ denotes the power set of a set $X$.  
\end{enumerate}
 
\section{Two examples}
We start with two key examples which we shall revisit and formalise later. 
\begin{example}\label{bms}
\emph{Switching between two Brownian motions.} 
We may observe each of two independent Brownian motions (BMs), $B^0$ and $B^1$, but at any given time we may only observe one of them.\  We acquire a running reward of the difference between the BM we currently observe and the unobserved one.  At any time we may pay a state-dependent cost $K$ to switch our observation to the other BM.

The control process, $c$, gives the index of the BM we choose to observe, and is any c\`adl\`ag process taking values in $\{0,1\}$, predictable with respect  to the filtration $\GG^c_t\defto \sigma(B^{c_s}_s:\; s\leq t)$ (and satisfying certain restrictions on the time between jumps that we will specify later on p.~\pageref{ex-BMs-contd}). 

We denote by $\sigma^c(t)$ the last time that we changed our observed BM i.e. the last jump time of $c$ before time t, and $\tau^c(t)$ is the lag since the last jump i.e. $\tau^c(t)=t-\sigma^c(t)$. Then we define $Z^c$ as follows:
$$
Z^c_t\defto B^c_t-B^{1-c}_{\sigma^c(t)},
$$
so that $Z^c_t$ is the conditional mean of $B^c_t-B^{1-c}_t$ given observations up to time $t$.

The reward $J$ (which we seek to maximise) is given by
$$
J(c)=\int_0^\infty e^{-\alpha t}Z^c_tdt-\int_0^\infty e^{-\alpha t}K(Z^c_{t-},\tau(t-))|dc|_t.
$$
We formalise this example  on p.~\pageref{ex-BMs-contd}, generalise it in Example~\ref{ex:cts_field}, and solve it in some special cases in section~\ref{section:formal_example}.
\end{example}

\begin{example}\label{rand}
\emph{Poisson random measure search model.} 
The controlled process $X$ is a process in $\R^n$.
The control $c$ is the drift of $X$ and is bounded in norm by 1 so
$$
X^c_t=\int_0^tc_sds+\sigma W_t,
$$
where $W$ is a BM and $||c_t||\leq 1$.

There is an underlying locally finite 
Poisson random measure on $\R^n$, $\mu$.

We (progressively) observe $W$ and the restriction of $\mu$ to the path traced out by $B(X^c_\cdot)$, the closed unit ball around $X^c$ (a drift-controlled Wiener sausage \cite{donsk}).

The objective function $J$ (to be minimised) is 

$$
J(c)=\int_0^{\tau^c} e^{-\alpha t}\mu(B(X^c_t))dt+e^{-\alpha \tau^c}\kappa1_{(\tau^c<\infty)},
$$
where $\tau^c$ is a stopping time (time of retirement) which we also control and $\kappa$ is the cost of retirement.

This example is formalized and generalized as Example~\ref{search}.
\end{example}

\section{Stochastic control systems}\label{section:setting}
We begin by specifying the formal ingredients of a system of optimal dynamic stochastic control. 

\begin{definition}[Stochastic control system]\label{setting}
A \textbf{stochastic control system} consists of:
\begin{enumerate}[(i)]
\item\label{setting:T} A (linearly ordered) \bip{time set} $T$. 

{\sl We will assume (for simplicity) that either $T=\mathbb{N}_0$, or  $T=[0,\infty)$, with the usual order.} 

\item\label{setting:C} A set $\CC$ of \bip{admissible controls}. 

{\sl For example, \{equivalence classes 
of\} processes or stopping times. In general, $\CC$ is an arbitrary index set.}

\item\label{setting:Omega} A 
\bip{sample space} $\Omega$ endowed with a collection 
 of $\sigma$-algebras $(\FF^c)_{c\in \CC}$. 
 
 {\sl We regard $\FF^c$ as all the \emph{information 
 accumulated} (but not necessarily acquired by the controller) by the ``end of time'' or, possibly, by a 
 ``terminal time'', when $c$ is the chosen control. For example, in the case of optimal 
 \emph{stopping}, given a process $X$, the set of controls $\CC$ would be the \{equivalence classes of 
 the\} stopping times of the \{completed\} natural filtration of $X$, and for any $S\in \CC$, 
 $\FF^S=\sigma(X^S)$, the $\sigma$-field generated by the stopped process \{or its completion\}. }
 
\item\label{setting:ptymeasures} $(\PP^c)_{c\in \CC}$, a collection of \{complete\} probability 
measures, each $\PP^c$ having a domain which includes the \{$\PP^c$-complete\} $\sigma$-field 
$\FF^c$ (for $c\in \CC$). 

{\sl The controller chooses a probability measure from the collection 
$(\PP^c)_{c\in \CC}$. This allows for  the Girsanov approach to control, where the controller is seen as 
affecting the probability measure, rather than just the random payoff.}

\item\label{setting:payoff} A \bip{reward function} $J:\CC\to [-\infty,\infty]^\Omega$, each $J(c)$ being $\mathcal{F}^c$-measurable \{and defined up to $\PP^c$-a.s. equality\} (as $c$ runs over $\CC$). 

{\sl We further insist $\EEc J(c)^-<\infty$ for all $c\in \CC$.  Given the control $c\in \CC$, $J(c)$ is the random \emph{payoff}. Hence, in general, we allow both the payoff, as well as the probability law, to vary.}

\item\label{setting:filtrations} A collection of filtrations, indexed by $T$, $(\GG^c)_{c\in \CC}$ on 
$\Omega$. 

{\sl It is assumed $\GG_\infty^c\eqdef \lor_{t\in T}\GG^c_t\subset \FF^c$, and (for 
simplicity) that $\GG^c_0$ is $\PP^c$-trivial (for all $c\in \CC$) \{and contains all the $\PP^c$-null 
sets\}, while $\GG^c_0=\GG^d_0$ \{ so the null sets for $\PP^c$ and $\PP^d$ are the same\} and 
$\PP^c\vert_{\GG^c_0}=\PP^d\vert_{\GG^d_0}$ for all $\{c,d\}\subset \CC$. $\mathcal{G}_t^c$ is 
the \emph{information acquired} by the controller by time $t\in T$, if the control chosen is $c\in \CC$ 
(e.g. $\GG^c$ may be the \{completed\} natural filtration of an observable process $X^c$ which 
depends on $c$). Perfect recall is thus assumed.}
\end{enumerate}
\end{definition}
\noindent{\textbf{Example \ref{bms} continued.}} \label{ex-BMs-contd}
{\sl We formalize Example~\ref{bms} in the context of Definition~\ref{setting}. The time set is $[0,\infty)$. Fix the discount factor $\alpha\in (0,\infty)$, let $(\Omega,\HH,\PP)$ be a probability space supporting two independent, 
sample-path-continuous, Brownian motions $B^0=(B^0_t)_{t\in [0,\infty)}$ and $B^1=(B^1_t)_{t\in [0,\infty)}$, starting at $0$ and $-x\in \mathbb{R}$, respectively (Definition~\ref{setting}\ref{setting:Omega} and~\ref{setting}\ref{setting:ptymeasures}; $\FF^c=\HH$, $\PP^c=\PP$ for all $c$). Denote by $\FF$ the natural filtration of the pair $(B^0,B^1)$. Then, for each c\`adl\`ag, $\FF$-adapted, $\{0,1\}$-valued process $c$, let $\GG^c$ be the natural filtration of $B^c$, the observed process (Definition~\ref{setting}\ref{setting:filtrations}); let $(J_k^c)_{k=0}^\infty$ be the jump times of $c$ (with $J_0^c\eqdef 0$; $J^c_k=\infty$, if $c$ has less than $k$ jumps); and define:
\begin{equation}\label{eq:epsilon-sep}
\CC\eqdef \bigcup_{\epsilon>0}\big\{\FF\text{-adapted, c\`adl\`ag, }\{0,1\}\text{-valued, processes }c,\text{ with }c_0=0,
\end{equation}
$$\;\;\;\;\;\;\;\;\;\;\;\;\;\;\;\;\;\;\;\;\;\;\;\;\;\;\text{ that are }\GG^c\text{-predictable and such that }J^c_{k+1}-J^c_k\geq \epsilon\text{ on }\{J_k^c<\infty\}\text{ for all }k\in \mathbb{N}_0\big\}$$
(Definition~\ref{setting}\ref{setting:C}). 
A more ``obvious'' choice would be to omit the condition on \lq $\epsilon$-separation of jumps' of the control process in the definition of $\CC$; however, a moment's thought will show that an optimal control is potentially not c\`adl\`ag, since, on jumping, we are exposed to unpleasant surprises: the unobserved BM may be so highly negative that we wish to switch straight back on first observing it. The insistence on the ``$\epsilon$-separation'' of the jumps of the controls allows us to emphasize the salient features of the control-dependent informational flow, without being preoccupied by the technical details.

Next, define, for each $c\in \CC$: 
\begin{itemize}
\item  the last jump time of $c$ before time $t$ and the lag since then, respectively: 
\begin{itemize}
\item $\sigma^c_t\eqdef \sup\{s\in [0,t]:c_s\ne c_t\}$; 
\item $\tau^c_t\eqdef t-\sigma^c_t$;  
\end{itemize}
 for each $t\in [0,\infty)$ (with the convention $\sup\emptyset\eqdef 0$);
\item $Z^c\eqdef B^c-B^{1-c}_{\sigma^c}$, the current distance of the observed Brownian motion to the last recorded value of the unobserved Brownian motion;
\item $J(c)\eqdef \int_0^\infty e^{-\alpha t}Z^c_tdt-\int_{(0,\infty)} e^{-\alpha t}K(Z^c_{t-},\tau^c_{t-})\vert dc\vert_t$, where $K:\mathbb{R}\times [0,\infty)\to \mathbb{R}$ is a measurable function of polynomial growth (Definition~\ref{setting}\ref{setting:payoff}; note that $\vert Z^c\vert \leq \overline{B^0}+\overline{B^1}$ (where a line over a process denotes its running supremum), so there are no integrability issues (thanks to the `$\epsilon$-separation' of the jumps of $c$). 
\end{itemize}
Notice that $\EE^\PP  \left[\int_0^\infty e^{-\alpha t}Z^c_tdt\right]=\EE^\PP \left[\int_0^\infty e^{-\alpha t}\left(B^c_t-B^{1-c}_t\right)dt\right]$. Define $V(x)\eqdef \sup_{c\in \CC}\EE^\PP J(c)$. }
\finish

\begin{definition}[Optimal expected payoff]
We define the \emph{optimal expected payoff} 
$$v\eqdef \sup_{c\in \CC}\EEc J(c)$$  (with $\sup\emptyset\eqdef -\infty$). Then, $c\in \CC$ is said to be \emph{optimal} if $\EE^{\PP^c}J(c)=v$, while a $\CC$-valued net is said to be \emph{optimizing} if the limit of its payoffs is $v$. 
\end{definition}

\begin{remark}
\leavevmode
\begin{enumerate}
\item It is no restriction to assume the integrability of the negative part of $J$ in 
Definition~\ref{setting}\ref{setting:payoff} since, allowing controls $c$ for which $\EEc J(c)^-=\infty$, but for which $\EEc J(c)$ is defined, would not change the value of $v$. 
\item A consideration of filtering problems shows that it is \emph{not} natural to insist on each $J(c)$ 
being $\GG^c_\infty$-measurable. The outcome of our controlled experiment need never be known to 
the controller, 
all we are concerned with is the maximization of its expectation.
\item In the case where $\CC$ is a collection of processes, a natural requirement is for each such process $c\in \CC$ to be adapted or  previsible with respect to $\GG^c$. If $\CC$ is a collection of random times, then each such $c\in \CC$ should presumably be a (possibly predictable) stopping time of $\GG^c$. But we do not insist on this.
\end{enumerate}
\end{remark}
We now introduce the concept of a control time, a natural generalization of the notion of a stopping time to the setting of control-dependent filtrations.

\begin{definition}[Control times]\label{def:stoppingtimes}
A collection of random times $\SS=(\SS^c)_{c\in \CC}$ is called a \textbf{control time}, if $\SS^c$ is a \{defined up to $\PP^c$-a.s. equality\} stopping time of $\GG^c$ for every $c\in \CC$. 
\end{definition}

\begin{example}
A typical situation to have in mind is the following. We observe a process $X^c$, its values being dependent on $c$. Then define $\GG^c$ to be the \{completed\} natural filtration of $X^c$. Letting $\SS^c$ be the first entrance time of $X^c$ into some fixed set, the collection $(\SS^c)_{c\in \CC}$ constitutes a control time (as long as one can formally establish the stopping time property). \finish
\end{example}

\begin{definition}[Deterministic and control-constant times]
If $\SS^c(\omega)=a\in T\cup\{\infty\}$ for \{$\PP^c$-almost\} all $\omega\in \Omega$, and every 
$c\in \CC$, then $\SS$ is called a \textbf{deterministic} time. More generally, if there is a random time 
$S$, which is a stopping time of $\GG^c$ and $\SS^c=S$ \{$\PP^c$-a.s\} for each $c\in \CC$, then 
$\SS$ is called a \textbf{control-constant time}.
\end{definition}
As yet, $\CC$ is an entirely abstract set with no dynamic structure attached to it. The following definitions
establish this structure. The reader should think of $\mathcal{D}(c,\SS)$ as being the controls 
``agreeing \{a.s.\} with $c$ up to time $\SS$" (see Examples~\ref{ex:cts_field} and~\ref{search} for examples of the collections $\mathcal{D}(c,\SS)$).

\begin{definition}[Adaptive control dynamics]\label{setting:controldyn}
Given a  stochastic control system of Definition~\ref{setting}, the system is said to have {\bf adaptive control dynamics} if it comes equipped with a 
family $(\mathcal{D}(c,\SS))_{(c,\SS)\in \CC\times \Gtimes}$ of 
subsets of $\CC$ with the following properties: 

\begin{enumerate}[(1)]
\item\label{controldyn:zeroo} $\Gtimes$ is a collection of control times.
\item\label{controldyn:one} $c\in \DD(c,\SS)$ for all $(c,\SS)\in  \CC\times \Gtimes$.
\item\label{controldyn:two} For all $\SS\in \Gtimes$ and $\{c,d\}\subset \CC$, $d\in \DD(c,\SS)$ implies $\SS^c=\SS^d$ \{$\PP^c$  and  $\PP^d$-a.s\}.
\item\label{controldyn:five}  If $\{\SS,\TT\}\subset \Gtimes$, $c\in \CC$ and $\SS^c=\TT^c$ \{$\PP^c$-a.s\}, then $\DD(c,\SS)=\DD(c,\TT)$. 
\item\label{controldyn:three} If $\{\SS,\TT\}\subset \Gtimes$ and $c\in \CC$ for which $\SS^d\leq \TT^d$ \{$\PP^d$-a.s.\} for $d\in \DD(c,\TT)$, then $\DD(c,\TT)\subset \DD(c,\SS)$. 
\item\label{controldyn:four} For each $\SS\in \Gtimes$, $\{\mathcal{D}(c,\SS):c\in \CC\}$ is a partition of $\CC$.
\item\label{controldyn:zero} For all $(c,\SS)\in  \CC\times \Gtimes$, if $\SS^c$ is
 identically \{or $\PP^c$-a.s.\} equal to $\infty$ (respectively $0$) then $\DD(c,\SS)=\{c\}$ (respectively $\DD(c,\SS)=\CC$).
\end{enumerate}
\end{definition}

\begin{remark}
In connection with Condition~\ref{controldyn:zero} of  Definition~\ref{setting:controldyn}  see Remark~\ref{remark:extend}\ref{remark:extend:a}.
\end{remark}

\begin{definition}
When condition \ref{controldyn:one} and 
\ref{controldyn:four} of Definition~\ref{setting:controldyn} prevail, we write $\sim_\SS$ for the equivalence relation induced by the partition $\{\mathcal{D}(c,\SS):c\in \CC\}$. 
\end{definition}

\begin{remark}
When $\SS$ is not a control-constant time then condition  
 \ref{controldyn:two} of Definition~\ref{setting:controldyn}  
may bite. For example, when, for some control $c$, $\SS^c$ is the first entrance time into some fixed set of an observed controlled process $X^c$, then controls agreeing with $c$ up to $\SS^c$,  should  leave $X^c$ invariant. 
This assumption is thus as much a restriction/consistency requirement on the family $\DD$, as on which 
control times we can put into the collection $\Gtimes$. Put differently, $\Gtimes$ is not 
necessarily a  completely arbitrary collection of control times. For while a control time is just \emph{any} 
family of $\GG^c$-stopping times, as $c$ runs over the control set $\CC$, the members of $\Gtimes$ 
 enjoy the further property of ``agreeing between two controls, if the latter coincide prior to 
them''. This is of course trivially satisfied for deterministic times (and, more generally, control-constant 
stopping times), but may hold for other control times as well. 
The choice of the family $\Gtimes$ will generally be dictated by the problem at hand, and is typically to do with what times the act of ``controlling'' can be effected at. Example~\ref{counter2} below, anticipating somewhat the results of section~\ref{section:bellman},  illustrates this in the control-independent informational setting. It manifests, if one is to preserve Bellman's principle, the need to work with stopping times (and \emph{a fortiori} control times, when there is dependence on control) in general. 
\end{remark}
\begin{example}\label{counter2}
\emph{Control at stopping times.}  Fix a probability space $(\Omega,\HH,\PP)$ and on it (i) a Poisson process $N$ of unit intensity with arrival times $(S_n)_{n\in\mathbb{N}_0}$, $S_0\eqdef 0$, $S_n<\infty$ for all $n\in \mathbb{N}$; (ii) an independent sequence  of independent random signs $R=(R_n)_{n\in \mathbb{N}_0}$ with values in $\{-1,+1\}$ with  $\PP(R_n=+1)=1-\PP(R_n=-1)=2/3$. 

The ``observed process'' is 
$$W_t\eqdef N_t+\int_0^t \sum_{n\in \mathbb{N}_0}R_n\mathbbm{1}_{[S_n,S_{n+1})}(s)ds$$ 
(so we add to $N$ a drift of $R_n$ during the random time interval $[S_n,S_{n+1})$, $n\geq 0$). Let $\GG$ be the \emph{natural} filtration of $W$. Remark the arrival times of $N$ are stopping times of $\GG$. 

The set of controls $\CC$ consists of real-valued, measurable processes, starting at $0$, which are adapted to the natural filtration of the bivariate process $(W\mathbbm{1}_{\{\Delta N\ne 0\}},N)$ (where $\Delta N$ is the jump process of $N$; intuitively, we must decide on the strategy for the whole of $[S_n,S_{n+1})$ based on the information available at time $S_n$ already, $n\geq 0$). For $X\in \CC$ consider the \emph{penalty} functional 
$$J(X)\eqdef \int_{[0,\infty)} e^{-\alpha t}\mathbbm{1}_{(0,\infty)}\circ \vert X_t-W_t\vert dt,$$
where $\alpha\in (0,\infty)$. Let $v\eqdef \inf_{X\in \CC}\EE^\PP J(X)$ be the optimal expected penalty; clearly an optimal control is the process $\hat{X}$ which takes the value of $W$ at the instances which are the arrival times of $N$ and assumes a drift of $+1$ in between those instances, so that $v=1/(3\alpha)$. Next, for $X\in \CC$, let $$V^X_S\eqdef \PP\mbox{-}\essinf_{Y\in \CC,Y^S=X^S}\EE^\PP[J(Y)\vert \GG_S],\quad S\text{ a stopping time of }\GG,$$ be the Bellman system. We shall say $Y\in \CC$ is {\em conditionally admissible at time $S$ for the control $X$}, if $Y^S=X^S$ and {\em conditionally optimal at time $S$} if, in addition, $V^Y_S=\EE^\PP[J(Y)\vert \GG_S]$ $\PP$-a.s. Set $V\eqdef V^{\hat{X}}$ for short. 

We now make the following two claims. 

 \textbf{Claim 1}\label{counter2:one} The process $(V_t)_{t\in [0,\infty)}$ (the Bellman process (i.e. Bellman system at the deterministic times) for the optimal control), is not mean non-decreasing (in particular, is not a submartingale, let alone a martingale with respect to $\GG$) and admits no a.s. right-continuous version; moreover, $\EE^\PP V_t<\EE^\PP V_0$ for each $t\in (0,\infty)$.

{\rm\textbf{Proof of Claim 1.}  First, clearly $V_0=v$.  Second, for $t\in (0,\infty)$, the following control, denoted $X^\star$, is, apart from $\hat{X}$, also conditionally admissible at time $t$ for $\hat{X}$: It assumes the value of $W$ at the instances of the arrival times of $N$, and a drift of $+1$ in between those intervals, \emph{until} before (inclusive of) time $t$; strictly after time $t$ and until strictly before the first arrival time of $N$ which is $\geq t$, denoted $S_t$, it takes the values of the process which starts at the value of $W$ at the last arrival time of $N$ strictly before $t$ and a drift of $-1$ thereafter; and after (and inclusive of) the instance $S_t$, it resumes to assume the values of $W$ at the arrival times of $N$ and a drift of $+1$ in between those times. Notice also that $R_t\mathbbm{1}(t\text{ is not an arrival time of }N)\in \GG_t$, where $R_t=\sum_{n\in \mathbb{N}_0} R_n\mathbbm{1}_{[S_{n},S_{n+1})}(t)$, i.e. $R_t\mathbbm{1}(t\text{ is not an arrival time of }N)$ is the drift at time $t$, on the (almost certain) event that $t$ is not an arrival time of $N$, zero otherwise. It follows that, since $\hat{X}$ is conditionally admissible for $\hat{X}$ at time $t$: $$V_t\leq \EE^\PP[J(\hat{X})\vert \GG_t],$$ so $\EE^\PP V_t\mathbbm{1}_{\{R_t=+1\}}\leq \EE^\PP J(\hat{X})\mathbbm{1}_{\{R_t=+1\}}$; whereas since $X^\star$ is also conditionally admissible at time $t$ for $\hat{X}$: $$V_t\leq \EE^\PP[J(X^\star)\vert \GG_t],$$ so $\EE^\PP V_t\mathbbm{1}_{\{R_t=-1\}}\leq \EE^\PP J(X^\star)\mathbbm{1}_{\{R_t=-1\}}=\EE^\PP J(\hat{X})\mathbbm{1}_{\{R_t=-1\}}-\EE^\PP\int_{(t,S_t)} e^{-\alpha t}dt\mathbbm{1}_{\{R_t=-1\}}=\EE^\PP J(\hat{X})\mathbbm{1}_{\{R_t=-1\}}-\frac{1}{\alpha}e^{-\alpha t}(1-\frac{1}{1+\alpha})\frac{1}{3}$ (by the Markov property of $N$ and the independence of $R$ and $N$). Summing the two inequalities we obtain $$\EE^\PP V_t\leq v-\frac{1}{3(1+\alpha)}e^{-\alpha t},$$ implying the desired conclusion (for the nonexistence of a right continuous version, assume the converse, reach a contradiction via uniform integrability).\qed}

 \textbf{Claim 2} The process $(V_{S_n}^X)_{n\in \mathbb{N}_0}$ is, however,  a discrete-time submartingale (and martingale with $X=\hat{X}$) with respect to $(\GG_{S_n})_{n\in \mathbb{N}_0}$, for all $X\in \CC$. 

{\rm\textbf{Proof of Claim 2.} For $X=\hat{X}$, the claim follows at once from the observation that  $\hat{X}$ is conditionally optimal at each of the arrival instances of $N$. The submartingale property is shown in section~\ref{section:bellman}, on p.~\pageref{ex:control-at-stopping-times:contd}, once Bellman's principle (Theorem~\ref{Bell1}) has been established. \qed}
\end{example}

Using the dynamical structure of Definition~\ref{setting:controldyn}, a \emph{key} condition --- is as follows: 

\begin{definition}[Stability under stopping]\label{assumption:temporalconsistency}
We say that a stochastic control with adaptive dynamics system is {\bf stable under stopping} if for all $\{c,d\}\subset \CC$ and $\SS\in \Gtimes$ satisfying $c\sim_\SS d$, we have $\GG^c_{\SS^c}=\GG^d_{\SS^d}$ and $\PP^c\vert_{ \GG^c_{\SS^c}}=\PP^d\vert_{\GG^d_{\SS^d}}$.
\end{definition}
From now on, up to  and including section~\ref{section:bellman}, we shall assume that the conditions of Definitions \ref{setting}, \ref{setting:controldyn} and \ref{assumption:temporalconsistency} all hold i.e. we are dealing with a {\em stochastic control system which has adaptive dynamics and is stable under stopping} which we shall refer to simply as a {\bf coherent control system}.
\begin{remark}
So, from now on it is appropriate to think of  $\DD(c,\SS)$ as just  $\DD(c,\SS^c)$, the collection of admissible controls which: up to the stopping time $\SS^c$, agree with $c$; generate the same information and have the same control law as $c$.
\end{remark}

\noindent{\textbf{Example \ref{bms} continued.}}\label{ex-BMs-contd-1} {\sl For $c\in \CC$ and control times $\SS$, set 
$$\DD(c,\SS)\eqdef \{d\in \CC:d^{\SS^c}=c^{\SS^c}\}.
$$
Then let $\Gtimes$ be any subset of
$$
\Gtimes'\eqdef \{\text{control times }\SS\text{ such that whenever }d\in \DD(c,\SS) \text{ then }\SS^c=\SS^d\,  \text{and }\,  \GG^c_{\SS^c}=\GG^d_{\SS^d}\}.$$ This defines a coherent control system.  Clearly the deterministic times $[0,\infty)\subset \Gtimes'$. Moreover, thanks to Corollary~\ref{theorem:observational_consistency} and Remark~\ref{remark:important},  $\Gtimes'=\{\text{control times }\SS\text{ such that }\forall c\in \CC\; \forall d\in \CC\;\;(d^{\SS^c}=c^{\SS^c}\text{ implies that } \SS^c=\SS^d)\}$, provided $(\Omega,\HH)$ is Blackwell. Note the freedom in the choice of $\Gtimes$ (as long as it is a subset of $\Gtimes'$); see also Remark~\ref{fundamental}\ref{ceteris-paribus} to follow.
}\finish

\newcounter{dummy}
\refstepcounter{dummy}
\label{bullet:three}

We will conclude this section with rather general extensions of 
Examples \ref{bms} and \ref{rand} illustrating the concepts introduced thus far, focusing on the control-dependent informational flows, and with explicit references made to Definitions~\ref{setting}, ~\ref{setting:controldyn} and \ref{assumption:temporalconsistency}.

Before doing so, we summarize  the remarks and observations regarding control systems made so far in

\begin{remark}\label{fundamental}\textbf{Fundamental remark.}
\begin{enumerate}[(I)]
\item As far as the optimal value and optimal control (or optimizing net of controls) are concerned, \emph{only} the objects $(\Omega,\CC,(\FF^c)_{c\in \CC},J,(\PP^c)_{c\in \CC})$ need be specified. Indeed,  given these objects satisfying items \ref{setting:C}-\ref{setting:payoff} of Definition~\ref{setting}, one can define $T\eqdef \mathbb{N}_0$ or $T\eqdef [0,\infty)$ , $\Gtimes\eqdef \emptyset$ and $\GG^c_t\eqdef \{\emptyset,\Omega\}$ for all $c\in \CC$ and $t\in T$, and the system is a coherent control system, and also (trivially) satisfies Assumption~\ref{assumption:lattice} (to follow). Nevertheless, this is obviously a static description that ignores any dynamical structure. 
\item Of course, the $\GG^c$s (which require the presence of $T$) will typically be used to define $\CC$ by insisting that an admissible control $c$ be adapted/predictable/a stopping time with respect to $\GG^c$ (meaning, in particular, that the $\GG^c$s should first be given  \emph{a priori} on a larger class of controls $c$, not just the ones that ultimately end up being admissible). But  we do not insist on this, chiefly since it is not necessary to limit the controls to being processes or random times. 
\item  The rest of the stochastic control system structure is there to reflect a {\em dynamic}\  control setting and to facilitate its analysis.  In particular, the equivalence classes, $\DD(c,\SS)$, specify a dynamical structure for the admissible controls $c\in \CC$ and those control times $\SS$, for which this can be done both consistently (see Definitions~\ref{setting:controldyn} and ~\ref{assumption:temporalconsistency}) and informatively (see Bellman's principle, Theorem~\ref{Bell1}, below). 
\item\label{ceteris-paribus} \emph{Ceteris paribus}, the bigger $\Gtimes$, the more informative is Bellman's principle. But there are restrictions on the members of $\Gtimes$ (especially those of Definition~\ref{assumption:temporalconsistency} and Assumption~\ref{assumption:lattice} to follow), which will generally preclude some control times from being included in $\Gtimes$. Specifically, Assumption~\ref{assumption:lattice} includes an interplay between $(\Omega,(\FF^c)_{c\in \CC},(\PP^c)_{c\in \CC}, J,\CC)$ and $(T,(\GG^c)_{c\in \CC},\Gtimes, (\DD(c,\SS))_{c\in \CC,\SS\in \Gtimes})$, the parts of the control system that, respectively, determine and help analyze the stochastic problem at hand (see also Remark~\ref{remark:extend}). In particular, Assumption~\ref{assumption:lattice} can fail for deterministic times (see Example~\ref{counter2} as continued on p.~\pageref{ex:control-at-stopping-times:contd}).
\end{enumerate}
\end{remark}

Remark~\ref{fundamental} is most easily appreciated in the context of the two examples that follow. The first is a generalization of Example~\ref{bms} 
to motion in a random field.

\begin{example}\label{ex:cts_field}
\emph{Motion in a random field.} The time set is $[0,\infty)$ (Definition~\ref{setting}\ref{setting:T}; $T=[0,\infty)$). We are given: 
\begin{enumerate}
\item a filtered probability space $(\Omega,\HH,\FF,\PP)$ (Definitions~\ref{setting}\ref{setting:Omega} and ~\ref{setting}\ref{setting:ptymeasures}; $\FF^c=\HH$ and $\PP^c=\PP$ for each $c$);
\item\begin{itemize}
\item a subset $O\subset \mathbb{R}$; 
\item an ``initial point'' $o_0\in O$; 
\item a random (time-dependent) real-valued field $(Y^o)_{o\in O}$ -- each $Y^o_t$ being an $\FF_t$-measurable random variable, and the random map $((o,t)\mapsto Y^o_t)$ being assumed continuous from $O\times [0,\infty)$ into $\mathbb{R}$ (so that the map $((\omega,o,t)\mapsto Y^o_t(\omega))$ is automatically $\HH\otimes \mathcal{B}(O)\otimes \mathcal{B}([0,\infty))/\mathcal{B}(\mathbb{R})$-measurable)  [in Example~\ref{bms}, $O=\{0,1\}$, $o_0=0$ and $Y^0=B^0$ and $Y^1=B^1$ are the two Brownian motions]; 
\end{itemize}
\item ``discount factor $\alpha\in [0,\infty)$; 
\item a radius of observation $a\in [0,\infty)$ [in Example~\ref{bms}, $a=0$]; 

and 
\item a family $\HHH$  of subsets of $O^{[0,\infty)}$,  \emph{directed upwards with respect to union} i.e. if $A,C\in \HHH$ then $\exists D\in \HHH$ with $D\supset A\cup C$  [in Example~\ref{bms}, $\HHH=\{\{p\in \{0,1\}^{[0,\infty)}: p\text{ is c\`adl\`ag and for all }k\in \mathbb{N},\text{  }J(p)_{k+1}-J(p)_k\geq \epsilon\text{ if }J(p)_k<\infty\}:\epsilon\in (0,\infty)\}$, where $J(p)_k$ is the time of the $k$-th jump of the path $p$ ($=\infty$, if there is not one)].
\end{enumerate}
Denoting $B_a(0)\eqdef \{x\in \mathbb{R}:\vert x\vert\leq a\}$, we require that $(Y^o_0)_{o\in B_a(0)}$ is $\PP$-trivial and if $a>0$ that $O=\mathbb{R}$. With regard to the random field $Y$ think of, for example, the local times of a Markov process, the Brownian sheet, solutions to SPDEs \cite{brzezniak} etc. 

Let $\CC'$ consist of precisely all the 
$\FF$-adapted, finite variation right-continuous 
processes $c$ with $c_0=o_0$ such that $c$ takes values in $H$ for some $H\in \HHH$ ($H$ may vary with the choice of $c$). For each $c\in \CC'$, we assume we are also given 
\begin{enumerate}[resume]
\item an $\HH\otimes \mathcal{B}([0,\infty))$-measurable ``reward'' process $Z^c$ taking values in a measurable space $(A,\AA)$ [in Example~\ref{bms}, $(A,\AA)=(\mathbb{R},\mathcal{B}(\mathbb{R}))$]; 
\item and a $\HH\otimes \mathcal{B}([0,\infty))$-measurable ``penalty'' process $\Gamma^c$ taking values in a measurable space $(E,\EEE)$ [in Example~\ref{bms}, $\Gamma^c:=(Z^c,\tau^c)$ and $(E,\EEE)=(\mathbb{R}\times [0,\infty),\mathcal{B}(\mathbb{R}\times [0,\infty)))$].
\end{enumerate}
We assume that $\Gamma^c$ and $Z^c$ depend only ``path-by-path" on $c$ in the sense that if $c(\omega)=c'(\omega)$, then $\Gamma^c(\omega)=\Gamma^{c'}(\omega)$ and $Z^c(\omega)=Z^{c'}(\omega)$ for $\omega\in \Omega$, $\{c,c'\}\subset \CC'$.  Finally, we are given 
\begin{enumerate}[resume]
\item a measurable function $f:A\to [-\infty,\infty]$ and a measurable function $g:E\to [-\infty,\infty]$ [in Example~\ref{bms}, $f=\id_\mathbb{R}$ is the identity on $\mathbb{R}$ and $g=K$]. 
\end{enumerate}

Informally, the idea is to control the movement in such a random field via a control $c\in \CC'$, observing at time $t\in [0,\infty)$ only the values of the field  in the interval $[c_t-a,c_t+a]\cap\mathbb{R}$. What is progressively observed then, under the control $c$, is the process $R^c\eqdef ((Y^{c_t+x}_t)_{x\in B_a(0)})_{t\in [0,\infty)}$ taking values in the space $C(B_a(0),\mathbb{R})$ of continuous paths from $B_a(0)\to \mathbb{R}$ endowed with the supremum norm $\Vert\cdot\Vert_\infty$ (which makes it into a Polish space, since it is separable by the Stone-Weierstrass Theorem; and making $R^c$ right-continuous), and the corresponding Borel $\sigma$-field that coincides with the trace $\mathcal{B}(\mathbb{R})^{\otimes B_a(0)}\vert_{C(B_a(0),\mathbb{R})}$ of the product $\sigma$-field. An admissible control $c$ is either adapted or predictable (we will consider both cases -- in Example~\ref{bms} admissible controls are predictable) with respect to $\GG^c$. Furthermore, given a $c\in \CC'$: 
\begin{enumerate}[(a)] 
\item rewards accrue according to the  function $f$ of $Z^c$; 
\item the speed of movement is penalized according to the function $g$ of $\Gamma^c$; 
\item both rewards and penalties are discounted at rate $\alpha$. 
\end{enumerate}

We now formalize the preceding paragraph. The set $\CC$ of Definition~\ref{setting}\ref{setting:C} is specified as containing  precisely all $c\in \CC'$  that are adapted $\langle$predictable$\rangle$ with respect to the natural filtration (denoted $\GG^c$; as in Definition~\ref{setting}\ref{setting:filtrations}) of the process $R^c$, and satisfying (see the definition of $J$ in the paragraph following) $\EE\left[\int_0^\infty e^{-\alpha t}f^-( Z^c_t)dt+\int_0^\infty e^{-\alpha t}g^+(\Gamma^c_{t-})\vert dc\vert_t\right] <\infty$ (where $\vert dc\vert$ is the total variation of $dc$ (strictly speaking $dc$ is only defined on $\cup_{T\in [0,\infty)}\mathcal{B}([0,T])$ and is a finite, signed measure locally, but the family $((\vert dc\vert)\vert_{\mathcal{B}([0,T])})_{T\in [0,\infty]}$ admits a unique extension $\vert dc\vert$ to $\mathcal{B}([0,\infty))$ as a $\sigma$-finite measure); $\Gamma^c_{0-}\eqdef\Gamma^c_0$). Clearly the observed information $\GG^c$ depends in a highly non-trivial way on the chosen control $c$.  Remark that, for each $c\in \CC$, $\GG^c$ is a sub-filtration of $\FF$. 

Next, the payoff functional $J$ (Definition~\ref{setting}\ref{setting:payoff}) is given by: 
$$J(c)\eqdef \int_0^\infty e^{-\alpha t}f(Z^c_t)dt-\int_0^\infty e^{-\alpha t}g(\Gamma^c_{t-})\vert dc\vert_t,\quad c\in \CC.$$

Finally, with regard to Definition~\ref{setting:controldyn}, define for any $c\in \CC$ and control time $\SS$, $$\DD(c,\SS)\eqdef \{d\in \CC:d^{\SS^c}=c^{\SS^c}\},$$ and then let 
$\Gtimes$ be any subset of
$$ \Gtimes'\eqdef \{\text{control times }\SS\text{ such that }\forall c\;\forall d\;(d\in \DD(c,\SS)\text{ implies that } \SS^c=\SS^d\, \text{ and }\,  \GG^c_{\SS^c}=\GG^d_{\SS^d})\};
$$
clearly the deterministic times $[0,\infty)\subset \Gtimes'$. 
We will see that $\Gtimes'=\{$control times $\SS$ such that $\forall c\in \CC \;\forall d\in\CC\; (d^{\SS^c}=c^{\SS^c}\text{ implies that } \SS^c=\SS^d)\}$, as long as $(\Omega,\HH)$ is Blackwell (which can typically be taken to be the case).  Indeed, note that each $R^c$ is a Polish-space-valued right-continuous (in particular, progressively measurable) process, so that Corollary~\ref{theorem:observational_consistency} together with parts \ref{remark:important:2} and \ref{remark:important:3} of Remark~\ref{remark:important} apply. Regardless of whether or not $(\Omega,\HH)$ is Blackwell, however, all the provisions of Definitions~\ref{setting}, ~\ref{setting:controldyn}, and ~\ref{assumption:temporalconsistency}, are met, and so the example is a coherent control system. 
\finish 
\end{example}

We now give a generalization of the Poisson search model, Example \ref{rand}.

\begin{example}\label{search}
\emph{Random measure search model.} 
Again the time set is $[0,\infty)$ (Definition~\ref{setting}\ref{setting:T}; $T=[0,\infty)$).  We are given: 
\begin{enumerate}
\item $(\Omega,\HH, \FF,\PP)$, a filtered probability space;
\item $\mu$, a locally finite $\FF_0$-measurable random measure on $(\mathbb{R}^n,\mathcal{B}(\mathbb{R}^n))$, with a locally finite number of atoms; 
\item $\kappa\in [0,\infty]$, the cost of retirement;
\item a discount factor $\alpha\in [0,\infty)$;
\item $a\in [0,\infty)$, the observation radius [in Example \ref{rand}, $a=1$]. 
\end{enumerate}
We require that the location and the sizes of the atoms of  $\mu(B_a(0))$ are $\PP$-trivial (for Example \ref{rand}, condition on them first and fix knowledge of them).

We model the time of retirement by sending the control to a coffin state after retirement: let $\partial\notin \mathbb{R}^n$ be a coffin state, topologise $\mathbb{R}^n\cup\{\partial\}$ so that $\partial$ is an isolated point, clearly it is  a Polish space. Let $D(\mathbb{R}_+;\mathbb{R}^n\cup \{\partial\})$ be the space of c\`adl\`ag maps $m:[0,\infty)\to \mathbb{R}^n\cup \{\partial\}$, constant at $\partial$ after first hitting $\partial$,  
and such that $\mathbbm{1}_\mathbb{R}\circ m$ is locally bounded. 

Let $\CC'$ be the collection of $\FF$-adapted  random elements $c$ with values in $D(\mathbb{R}_+;\mathbb{R}^n\cup \{\partial\})$ satisfying $c_0=0$.

Of course we take $\FF^c\eqdef \HH$ and $\PP^c\eqdef \PP$ for each $c\in \CC'$ (Definitions~\ref{setting}\ref{setting:Omega} and ~\ref{setting}\ref{setting:ptymeasures}). 

To allow for extra conditions on controls (of a pathwise nature) let $\HHH$ be a family of subsets of $D(\mathbb{R}_+;\mathbb{R}^n\cup \{\partial\})$, directed upwards with respect to union [in Example \ref{rand}, take 
$$
\HHH=\{\{p\in D(\mathbb{R}_+;\mathbb{R}^n\cup \{\partial\})\text{ such that } \sup_{t< \tau(p)}||p(t)||_{\R^n}\leq 1\}\},
$$
where $\tau(p)$ is the first hitting time of $\partial$ by the path $p$. To allow instead controls that, for example, are bounded up to explosion (uniformly in $\omega$) by some constant that depends on the control, we would take $\HHH=\{\{p\in D(\mathbb{R}_+;\mathbb{R}^n\cup \{\partial\})$ such that $\sup_{t< \tau(p)}||p(t)||_{\R^n}\leq k\}:\; k\in (0,\infty)\}$.]

The corresponding ``controlled process'' is $X^c$ given by
$$
X^c_t=\int_0^tc_sds+\sigma W_t,
$$
where 
\begin{enumerate}[resume]
\item $W$ is an $\FF$-Brownian Motion and $\sigma\in\mathbb{R}$.
\end{enumerate}

Now for what we observe. We observe the BM, $W$, and the location and sizes of the atoms of $\mu$ in a ball of radius $a$ around $X^c$. Note that we don't observe the whole measure in this ball. In the Poisson special case, this is complete observation. 

To define a space in which the observations will live, for ${d\in \mathbb{N}_0}$ let $\S^d$ be the quotient under permutation of $(\mathbb{R}^n\times [0,\infty))^d$ (setting $(\mathbb{R}^n\times [0,\infty))^0\eqdef \{\emptyset\}$) endowed with the quotiented Euclidean metric.
Now define $\Theta=\R^n\times\cup_{d\in \mathbb{N}_0}\{d\}\times S^d$, where the metric on the disjoint union $\cup_{d\in \mathbb{N}_0}\{d\}\times S^d$ is defined by taking the minimum of the metric on each fiber ($\{d\}\times S^d$) with $1$, and making the distance between members of different fibers $1$. It is easy to check that  $\Theta$  is a Polish space.   

We think of $d$ as telling us how many atoms we are currently seeing around a location, and then $S^d$ gives the locations and masses.  The factor $\R^n$ corresponds to observing $W$. 

Now endow $\Theta$ with the corresponding Borel measurable structure, $\TT$, which coincides with the product measurable structure of $\mathbb{R}^n$ and of the disjoint union $\cup_{d\in \mathbb{N}_0}\{d\}\times S^d$. 

So what we observe, given $c\in \CC'$ is the $(\Theta,\TT)$-valued process $O^c\eqdef (W,(D^c,M^c))$, where at time $t\in [0,\infty)$, $D_t^c$ is the number of atoms of $\mu$ in $B_a(X^c_t)$, and then $M^c_t$ is the location and sizes of the atoms of $\mu$ in $B_a(X^c_t)$.  Note that $O^c_t$ is $\FF_t$-measurable for $t\in [0,\infty)$ \cite{olav}[p. 19, Lemma~2.1]. We set $\GG^c$ to be the natural filtration of $O^c$ (Definition~\ref{setting}\ref{setting:filtrations})). Then $\CC$ of  Definition~\ref{setting}\ref{setting:C}  is the set of those $c\in \CC'$ that are 
\begin{enumerate}[(i)]
\item predictable with respect to $\GG^c$  

and such that 
\item there is an $H\in \HHH$ (that may depend on $c$) with $c$ taking values in $H$ 

and 
\item $\EE^\PP \int_0^{\tau^c}e^{-\alpha t}\mu(B_a(X^c_t))dt+\kappa e^{-\alpha\tau^c}\mathbbm{1}(\tau^c<\infty)<\infty$, where $\tau^c$ is time of retirement of $c$ (i.e. the first hitting time of $\partial$ by $c$).  
\end{enumerate} 
For $c\in \CC$,  we set
$$
J(c)\eqdef -\int_0^{\tau^c}e^{-\alpha t}\mu(B_a(X^c_t))dt-\kappa e^{-\alpha \tau^c}\mathbbm{1}(\tau^c<\infty)
$$ 
(Definition~\ref{setting}\ref{setting:payoff}). 
Finally, we  define $\DD(c,\SS)$, $\Gtimes$ and $\Gtimes'$ exactly as in Example \ref{ex:cts_field}.  
Since   the process $O^c$ is right-continuous 
and Polish-space-valued, the same remark applies concerning  $\DD(c,\SS)$, as in  Example \ref{ex:cts_field}.
\finish
\end{example}

\section{The conditional payoff and the Bellman system}\label{section:cond_payoff_sytem}

\begin{definition}[Conditional payoff  and  Bellman system]
For $c\in\CC$ and $\SS\in \Gtimes$, we define: $$J(c,\SS)\eqdef \EE^{\PP^c}[J(c)\vert \GG^c_{\SS^c}],\text{ and then }V(c,\SS)\eqdef \PP^c\vert_{\GG^c_{\SS^c}}\mbox{-}\esssup_{d\in \DD(c,\SS)}J(d,\SS);$$ and say $c\in \CC$ is \textbf{conditionally optimal} at $\SS\in \Gtimes$, if $V(c,\SS)=J(c,\SS)$ $\PP^c$-a.s. The collection $(J(c,\SS))_{(c,\SS)\in \CC\times \Gtimes}$ is called the \textbf{conditional payoff system} and $(V(c,\SS))_{(c,\SS)\in \CC\times \Gtimes}$ the \textbf{Bellman system}. 
\end{definition}

\begin{remark}
\leavevmode
\begin{enumerate}[(i)]
\item Thanks to 
 Definition~\ref{assumption:temporalconsistency}, the essential suprema appearing in the definition of the conditional payoff system are well-defined (a.s.).
\item Also, thanks to Condition~\ref{setting:controldyn}\ref{controldyn:five}, $V(c,\SS)$ only 
depends on $\SS$ through $\SS^c$, in the sense that $V(c,\SS)=V(c,\TT)$ as soon as 
$\SS^c=\TT^c$ \{$\PP^c$-a.s.\}. Clearly the same holds true (trivially) of the system $J$.
\end{enumerate}
\end{remark}
Some further properties of the systems $V$ and $J$ follow. First,

\begin{proposition}\label{proposition:V-system}
$V(c,\SS)$ is $\GG^c_{\SS^c}$-measurable and its negative part is $\PP^c$-integrable for each $(c,\SS)\in\CC\times\Gtimes$. Moreover if $c\sim_\SS d$, then $V(c,\SS)=V(d,\SS)$ $\PP^c$-a.s. and $\PP^d$-a.s.
\end{proposition}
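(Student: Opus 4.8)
The plan is to treat the three assertions in turn, the unifying tool throughout being stability under stopping (Definition~\ref{assumption:temporalconsistency}), which reconciles the a priori distinct $\sigma$-fields $\GG^c_{\SS^c}$ and $\GG^d_{\SS^d}$ whenever $c\sim_\SS d$. Recall first that, since $\{\DD(e,\SS):e\in\CC\}$ is a partition (Definition~\ref{setting:controldyn}\ref{controldyn:four}), the membership $d\in\DD(c,\SS)$ is the same thing as $c\sim_\SS d$; for every such $d$, Definition~\ref{assumption:temporalconsistency} yields both $\GG^c_{\SS^c}=\GG^d_{\SS^d}$ and $\PP^c\vert_{\GG^c_{\SS^c}}=\PP^d\vert_{\GG^d_{\SS^d}}$. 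This single identity is what renders the essential supremum defining $V(c,\SS)$ a meaningful object, and is to my mind the conceptual heart of the statement (everything else being routine measure theory).

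For the measurability claim, I would first check that each member $J(d,\SS)=\EEc[J(d)\vert\GG^d_{\SS^d}]$ of the family over which the essential supremum is taken is in fact $\GG^c_{\SS^c}$-measurable: although $J(d,\SS)$ is by construction only $\GG^d_{\SS^d}$-measurable, the equality $\GG^d_{\SS^d}=\GG^c_{\SS^c}$ (valid since $d\in\DD(c,\SS)$) upgrades this to $\GG^c_{\SS^c}$-measurability. Since the essential supremum of a family of $\GG^c_{\SS^c}$-measurable functions relative to the measure $\PP^c\vert_{\GG^c_{\SS^c}}$ on $\GG^c_{\SS^c}$ is, by its standard construction, itself $\GG^c_{\SS^c}$-measurable, the first claim follows.

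For integrability of the negative part, I would use $c\in\DD(c,\SS)$ (Definition~\ref{setting:controldyn}\ref{controldyn:one}) to obtain the lower bound $V(c,\SS)\geq J(c,\SS)$ $\PP^c$-a.s., whence $V(c,\SS)^-\leq J(c,\SS)^-$ $\PP^c$-a.s. Conditional Jensen applied to the convex map $x\mapsto x^-$ gives $J(c,\SS)^-=\big(\EEc[J(c)\vert\GG^c_{\SS^c}]\big)^-\leq\EEc[J(c)^-\vert\GG^c_{\SS^c}]$ $\PP^c$-a.s.; taking $\PP^c$-expectations and invoking the standing hypothesis $\EEc J(c)^-<\infty$ of Definition~\ref{setting}\ref{setting:payoff} yields $\EEc V(c,\SS)^-\leq\EEc J(c,\SS)^-\leq\EEc J(c)^-<\infty$.

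Finally, for the invariance under $\sim_\SS$, suppose $c\sim_\SS d$. Then $\DD(c,\SS)=\DD(d,\SS)$, so the indexing sets coincide, and by stability under stopping the two essential suprema are computed over the same $\sigma$-field $\GG^c_{\SS^c}=\GG^d_{\SS^d}$ and with respect to the same measure $\PP^c\vert_{\GG^c_{\SS^c}}=\PP^d\vert_{\GG^d_{\SS^d}}$; hence $V(c,\SS)=V(d,\SS)$ off a $\PP^c\vert_{\GG^c_{\SS^c}}$-null set belonging to $\GG^c_{\SS^c}$. Since that exceptional set lies in $\GG^c_{\SS^c}=\GG^d_{\SS^d}$ and carries the common restricted measure zero, it is both $\PP^c$-null and $\PP^d$-null, giving the asserted equality $\PP^c$-a.s.\ and $\PP^d$-a.s. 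The only genuine input anywhere is the appeal to Definition~\ref{assumption:temporalconsistency} to match the $\sigma$-fields and the restricted measures; I expect this reconciliation to be the sole nontrivial point, each remaining step being a standard consequence of properties of conditional expectation and of essential suprema.
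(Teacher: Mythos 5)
Your proposal is correct and follows essentially the same route as the paper's (much terser) proof: measurability from the construction of the essential supremum over $\GG^c_{\SS^c}$ (with stability under stopping making the family $\GG^c_{\SS^c}$-measurable), integrability of the negative part from $c\in\DD(c,\SS)$ giving $V(c,\SS)\geq J(c,\SS)$, and the invariance claim from the coincidence of the index sets $\DD(c,\SS)=\DD(d,\SS)$ together with $\PP^c\vert_{\GG^c_{\SS^c}}=\PP^d\vert_{\GG^d_{\SS^d}}$. You merely fill in the routine details (conditional Jensen for the negative part, the null-set bookkeeping) that the paper leaves implicit.
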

\begin{proof}
Measurability of $V(c,\SS)$ follows from its definition. Moreover, since each $\DD(c,\SS)$ is non-empty, the integrability condition on the negative parts of $V$ is also immediate. The last claim follows from the fact that $\DD(c,\SS)=\DD(d,\SS)$ (adaptive dynamics) and $\PP^c\vert_{\GG^c_{\SS^c}}=\PP^d\vert_{\GG^d_{\SS^d}}$ (stability under stopping), when $c\sim_\SS d$. 
\end{proof}
Second, Proposition~\ref{proposition:cond_payoff_system}, will 
\begin{enumerate}[(i)] 
\item establish that $(J(c,\SS))_{(c,\SS)\in \CC\times\Gtimes}$ 
is a $(\CC,\Gtimes)$ payoff system in the sense of Definition \ref{defncg} below, 

and 
\item give sufficient conditions for the  equality $J(c,\SS)=J(d,\SS)$ to hold $\PP^c$ and $\PP^d$-a.s. on an event $A\in \GG^c_{\SS^c}$, when $c\sim_\SS d$ (addressing the situation when the two controls $c$ and $d$ agree ``for all times'' on $A$). Some auxiliary concepts are needed for this; they are given in Definition \ref{accinf}. 
\end{enumerate}
\begin{definition}[Payoff system]\label{defncg}
A collection $X=(X(c,\TT))_{(c,\TT)\in \CC\times\Gtimes}$ of functions from $[-\infty,\infty]^\Omega$ is a \textbf{payoff system} with respect to $(\CC,\Gtimes)$, if 
\begin{itemize}
\item[(i)] $X(c,\TT)$ is $\GG^c_{\TT^c}$-measurable for all $(c,\TT)\in \CC\times\Gtimes$
\item[and] 
\item[(ii)] $X(c,\SS)=X(c,\TT)$ $\PP^c$-a.s. on the event $\{\SS^c=\TT^c\}$, for all $c\in \CC$ and $\{\SS,\TT\}\subset \Gtimes$. 
\end{itemize}
\end{definition}

\begin{definition}[Times accessing infinity]\label{accinf}
\leavevmode

\begin{enumerate}
\item  For a sequence $(t_n)_{n\in \mathbb{N}}$ of elements of $[0,\infty]$, we say it \textbf{accesses
infinity}, if $\sup_{n\in \mathbb{N}}t_n=\infty$. 
 \item If   $\PP$ is a probability measure on a sample space $\Psi$, $A\subset\Psi$, $S_n:\Psi\to [0,\infty]$ for $n\in \mathbb{N}$, and $(S_n(\omega))_{n\in \mathbb{N}}$ accesses infinity for (respectively $\PP$-almost) every $\omega\in A$, then we say $(S_n)_ {n\in \mathbb{N}}$  \textbf{accesses infinity pointwise (respectively $\PP$-a.s.) on $A$}.
\suspend{enumerate}

\noindent In the context of a stochastic control system:
\resume{enumerate}
\item If $(\SS_n)_{n\in\mathbb{N}}$ is a sequence in $\Gtimes$,  $A\subset\Omega$, $c\in \CC$ and  $(\SS^c_n(\omega))_{n\in \mathbb{N}}$ accesses infinity for \{$\PP^c$-almost\} every $\omega\in A$ then we say that \textbf{$(\SS_n)$ accesses infinity \{a.s.\} on $A$ for the control $c$}.
\end{enumerate}

\end{definition}

\begin{proposition}\label{proposition:cond_payoff_system}
$(J(c,\SS))_{(c,\SS)\in \CC\times\Gtimes}$ is a $(\CC,\Gtimes)$ payoff system. 

Moreover, given $c, d\in C$, $\S\in\G$ and $A\in \GG^c_{\SS^c}$ with  
 $c\sim_\SS d$,
 if
\begin{enumerate}[(i)]
\item\label{payoff:one} there exists a sequence $(\SS_n)_{n\in\mathbb{N}}$ from $\GG$  which is \{a.s.\} nondecreasing and accesses infinity \{a.s.\} on $A$ for both of the controls $c$ and $d$, and for which $c\sim_{\SS_n}d$ and $A\in \GG^{c}_{\SS^c_n}$ for each $n\in \mathbb{N}$;
\item[and]
\item\label{payoff:two} $ \EE^{\PP^c}[J(c)\vert \GG^c_\infty]=\EE^{\PP^d}[J(d)\vert \GG^d_\infty]$ $\PP^c$-a.s. and $\PP^d$-a.s. on $A$;
\end{enumerate} 
then $J(c,\SS)=J(d,\SS)$ $\PP^c$-a.s. and $\PP^d$-a.s. on $A$.
\end{proposition}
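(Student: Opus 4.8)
The first assertion, that $(J(c,\SS))_{(c,\SS)}$ is a $(\CC,\Gtimes)$ payoff system, should be essentially immediate from the definitions. Property (i) of Definition~\ref{defncg} holds because $J(c,\SS)=\EE^{\PP^c}[J(c)\vert\GG^c_{\SS^c}]$ is by construction $\GG^c_{\SS^c}$-measurable. For property (ii), I would argue that on $\{\SS^c=\TT^c\}$ the two conditional expectations coincide $\PP^c$-a.s.: the standard fact that $\GG^c_{\SS^c}$ and $\GG^c_{\TT^c}$ agree when traced on $\{\SS^c=\TT^c\}$ (an event lying in both $\sigma$-fields), together with the $\PP^c$-a.s. uniqueness of conditional expectations, delivers the equality. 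This part is routine.

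The substance is the second assertion. The plan is to approximate the conditioning at the fixed (possibly finite) time $\SS$ by conditioning at the increasing times $\SS_n$ accessing infinity, and to pass to the limit using a martingale convergence argument. First I would fix attention on the set $A\in\GG^c_{\SS^c}$ and observe, via the hypothesis $c\sim_{\SS_n}d$ together with stability under stopping (Definition~\ref{assumption:temporalconsistency}), that $\GG^c_{\SS^c_n}=\GG^d_{\SS^d_n}$ and the measures $\PP^c,\PP^d$ agree on these $\sigma$-fields for every $n$; likewise $\GG^c_{\SS^c}=\GG^d_{\SS^d}$ with coincident measures. Since $(\SS_n)$ is nondecreasing and accesses infinity a.s.\ on $A$ for both controls, the filtration $(\GG^c_{\SS^c_n})_n$ increases up to $\GG^c_\infty$ on $A$ (up to null sets). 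I would then write the tower-property identities
\begin{equation}\label{eq:tower-c}
J(c,\SS)=\EE^{\PP^c}\big[J(c)\,\big\vert\,\GG^c_{\SS^c}\big]=\EE^{\PP^c}\big[\,\EE^{\PP^c}[J(c)\vert\GG^c_{\SS^c_n}]\,\big\vert\,\GG^c_{\SS^c}\big]
\end{equation}
valid on $A$ for every $n$ (using $\SS^c\le\SS^c_n$ on $A$, which follows since the $\SS_n$ access infinity while $\SS^c$ is fixed, and $A\in\GG^c_{\SS^c_n}$). The inner conditional expectations $\EE^{\PP^c}[J(c)\vert\GG^c_{\SS^c_n}]$ form a martingale along $(\GG^c_{\SS^c_n})_n$ converging $\PP^c$-a.s.\ and in $L^1$ on $A$ to $\EE^{\PP^c}[J(c)\vert\GG^c_\infty]$, by Lévy's upward theorem. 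The analogous identity and convergence hold for $d$ with $\PP^d$.

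The decisive step is to match the two limits. Because $c\sim_{\SS_n}d$ and the control system is stable under stopping, the conditional payoffs agree across the two controls at each level $n$ on $A$: the inner expectations $\EE^{\PP^c}[J(c)\vert\GG^c_{\SS^c_n}]$ and $\EE^{\PP^d}[J(d)\vert\GG^d_{\SS^d_n}]$ are computed over the same $\sigma$-field under the same measure and hence coincide $\PP^c$- (equivalently $\PP^d$-) a.s.\ on $A$ — \emph{provided} one already knows $J(c,\SS_n)=J(d,\SS_n)$ a.s.\ on $A$, which is exactly the content of hypothesis~(i) fed recursively, or which one establishes directly from the representation at level $n$. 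Passing $n\to\infty$, both sides converge to the common value supplied by hypothesis~(ii), namely $\EE^{\PP^c}[J(c)\vert\GG^c_\infty]=\EE^{\PP^d}[J(d)\vert\GG^d_\infty]$ a.s.\ on $A$. Taking $\EE^{\PP^c}[\,\cdot\,\vert\GG^c_{\SS^c}]$ of the (now equal) limits and invoking~\eqref{eq:tower-c} — with the measures agreeing on $\GG^c_{\SS^c}=\GG^d_{\SS^d}$ — yields $J(c,\SS)=J(d,\SS)$ a.s.\ on $A$ under both measures, as required.

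The main obstacle I anticipate is bookkeeping the a.s.\ qualifiers on the \emph{random} event $A$ rather than on all of $\Omega$: conditional expectations, essential suprema, and the martingale convergence must all be localized to $A$, and one must repeatedly use that $A\in\GG^c_{\SS^c_n}$ for each $n$ to pull indicators in and out of the conditional expectations and to ensure the upward martingale convergence genuinely takes place on $A$. A secondary delicate point is confirming $\SS^c\le\SS^c_n$ on $A$ for $n$ large (or arranging the argument so that only the limiting behaviour, not a uniform domination, is needed), since the $\SS_n$ access infinity only in the supremum-over-$n$ sense and $A$ may be a proper subset where this happens merely a.s.; care is needed so that the nondecreasing hypothesis is exploited correctly to make $(\GG^c_{\SS^c_n}\vert_A)_n$ an increasing sequence of $\sigma$-fields with union generating $\GG^c_\infty\vert_A$ up to $\PP^c$-null sets.
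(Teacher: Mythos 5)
The first assertion of your proposal is fine and matches the paper (measurability is by construction, and the trace/conditioning argument on $F=\{\SS^c=\TT^c\}$ is exactly the paper's Lemma~\ref{conditioninglemma} step). The second part, however, has two genuine gaps. First, your identity \eqref{eq:tower-c} requires $\GG^c_{\SS^c}\subset\GG^c_{\SS^c_n}$, i.e.\ $\SS^c\leq\SS^c_n$, and this does not follow from the hypotheses: accessing infinity only says $\sup_n\SS^c_n=\infty$ a.s.\ on $A$, and says nothing about any individual $\SS_n$ dominating $\SS^c$ (which may itself be infinite on part of $A$). Your justification ``since the $\SS_n$ access infinity while $\SS^c$ is fixed'' is simply not valid, so the projection of the level-$n$ identities back down to $\SS$ has no foundation.

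Second, and more seriously, your ``decisive step'' is circular. The claim that $\EE^{\PP^c}[J(c)\vert\GG^c_{\SS^c_n}]$ and $\EE^{\PP^d}[J(d)\vert\GG^d_{\SS^d_n}]$ coincide a.s.\ on $A$ is an instance of precisely the statement being proved (with $\SS$ replaced by $\SS_n$), and it is \emph{not} ``the content of hypothesis~(i)'': $c\sim_{\SS_n}d$ only says the controls agree up to $\SS_n$, while $J(c)$ and $J(d)$ depend on behaviour after $\SS_n$, so their conditional expectations at $\SS_n$ need not agree without invoking (ii). Moreover, comparing conditional expectations taken under the two \emph{different} measures $\PP^c$ and $\PP^d$ requires knowing these measures agree on a $\sigma$-field large enough to contain the integrands, namely on $\GG^c_\infty\vert_A=\GG^d_\infty\vert_A$; stability under stopping only gives agreement on $\GG^c_{\SS_n^c}=\GG^d_{\SS_n^d}$. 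Extracting exactly this measure agreement from hypothesis~(i) is the one missing idea: it is the paper's Lemma~\ref{lemma:accessinginfinity} (a $\pi$-system argument, with no martingale convergence anywhere). Once you have it, the proof finishes directly at $\SS$ itself: for arbitrary $B\in\GG^c_{\SS^c}=\GG^d_{\SS^d}$ one computes $\EE^{\PP^c}[J(c)\mathbbm{1}_A\mathbbm{1}_B]$ via the tower property through $\GG^c_\infty$, replaces the integrand using (ii), switches from $\PP^c$ to $\PP^d$ using the agreement on $\GG_\infty\vert_A$, collapses back down with the tower property under $\PP^d$, and switches measures once more on $\GG_{\SS}$ by stability under stopping; since $B$ is arbitrary and $A\in\GG^c_{\SS^c}$, the two conditional expectations agree a.s.\ on $A$. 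Your approximation scheme along $(\SS_n)$ and L\'evy's upward theorem are thus both unnecessary and, as set up, unjustified.
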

\begin{proof}
By definition, $J(c,\TT)$ is $\GG^c_{\TT^c}$-measurable. Taking $c\in \CC$ and
 setting $F\defto\{\TT^c=\SS^c\}$, $F\in \GG^c_{\SS^c}\cap \GG^c_{\TT^c}$ 
 and so $J(c,\TT)=J(c,\SS)$ $\PP^c$-a.s. on $F$ (applying 
 Lemma~\ref{conditioninglemma}), establishing that  $(J(c,\SS))_{(c,\SS)\in \CC\times\Gtimes}$ is a $(\CC,\Gtimes)$ payoff system.

To show that $J(c,\SS)=J(d,\SS)$ $\PP^c$-a.s. (and then, by symmetry, that $\PP^d$-a.s.) on $A$ under conditions \ref{payoff:one} and \ref{payoff:two}, we need only establish that 
\begin{equation}\label{agree}
\mathbbm{1}_A\EE^{\PP^c}[J(c)\vert \GG^c_{\SS^c}]=\mathbbm{1}_A\EE^{\PP^d}[J(d)\vert \GG^d_{\SS^d}]\;\PP^c\text{-a.s}.
\end{equation} 
Now \ref{payoff:one} and Lemma~\ref{lemma:accessinginfinity}  tell us that ${\PP^d}\vert_{\GG^d_\infty}$ and ${\PP^c}\vert_{\GG^c_\infty}$ agree on $A$, so that, taking an arbitrary $B\in \GG^c_{\SS^c}=\GG^d_{\SS^d}$, 
\begin{eqnarray*}
\EE^{\PP^c}[\EE^{\PP^d}[J(d)\vert \GG^d_\infty]\mathbbm{1}_A\mathbbm{1}_B]&=&\EE^{\PP^d}[\EE^{\PP^d}[J(d)\vert \GG^d_\infty]\mathbbm{1}_A\mathbbm{1}_B]\\
\text{hence }\EE^{\PP^c}[\EE^{\PP^c}[J(c)\vert \GG^c_\infty]\mathbbm{1}_A\mathbbm{1}_B]&=&\EE^{\PP^d}[\EE^{\PP^d}[J(d)\vert \GG^d_\infty]\mathbbm{1}_A\mathbbm{1}_B]
\text{ (by assumption \ref{payoff:two}),}\\
\text{so }\EE^{\PP^c}[J(c)\mathbbm{1}_A\mathbbm{1}_B]&=&\EE^{\PP^d}[\EE^{\PP^d}[J(d)\mathbbm{1}_A\vert \GG^d_{\SS^d}]\mathbbm{1}_B]\\
&&\text{ (by standard properties of conditional expectation})\\
\text{and thus } \EE^{\PP^c}[J(c)\mathbbm{1}_A\mathbbm{1}_B]&=&\EE^{\PP^c}[\EE^{\PP^d}[J(d)\mathbbm{1}_A\vert \GG^d_{\SS^d}]\mathbbm{1}_B]\\
&&(\text{since }\PP^c\vert_{ \GG^c_{\SS^c}}=\PP^d\vert_{\GG^d_{\SS^d}}\text{ by stability under stopping}).
\end{eqnarray*}
Since $B$ is arbitrary and $A\in \GG^c_{\SS^c}$, we conclude that (\ref{agree}) holds.
\end{proof}

\section{Bellman's principle}\label{section:bellman}

\begin{definition}[(super, sub-)martingale systems]
A collection $X=(X(c,\SS))_{(c,\SS)\in (\CC,\Gtimes)}$ of functions from $[-\infty,\infty]^\Omega$ is a $(\CC,\Gtimes)$-\textbf{martingale} (respectively \textbf{supermartingale},  \textbf{submartingale}) \textbf{system}, if for each $(c,\SS)\in \CC\times \Gtimes$ 
\begin{itemize}
\item[(i)]$X(c,\SS)$ is $\GG^c_{\SS^c}$-measurable, 
\item[(ii)]$X(c,\SS)=X(d,\SS)$ $\PP^c$-a.s. and $\PP^d$-a.s., whenever $c\sim_\SS d$, 
\item[(iii)] $X(c,\SS)$ is integrable (respectively the negative, positive part of $X(c,\SS)$ is integrable). 
\item[and]
\item[(iv)]for all $\{\SS,\TT\}\subset \Gtimes$ and $c\in \CC$ with $\SS^d\leq\TT^d$ \{$\PP^d$-a.s.\} for $d\in \mathcal{D}(c,\TT)$, 
$$\EEc[X(c,\TT)\vert\GG^c_{\SS^c}] = X(c,\SS),\text{ (respectively }\EEc[X(c,\TT)\vert\GG^c_{\SS^c}]\leq,\;\geq X(c,\SS)) \quad \PP^c\text{-a.s.}$$ 

\end{itemize}
\end{definition}
In order to be able to conclude the supermartingale property of the Bellman system (Bellman's principle), we shall need to make a further assumption.  

\begin{definition}[ULP]
Fix $\epsilon\in [0,\infty)$, $M\in (0,\infty]$, $c\in \CC$ and $\SS\in \Gtimes$. We say that $(J(d,\SS))_{d\in \DD(c,\SS)}$ has \textbf{the $(\epsilon,M)$-upwards-lattice property} if, whenever $\{d,d'\}\subset \DD(c,\SS)$, there exists a $d''\in \DD(c,\SS)$ such that 
$$J(d'',\SS)\geq (M\land J(d,\SS))\lor (M\land J(d',\SS))-\epsilon\quad\PP^c\text{-a.s.}$$
\end{definition}
\begin{assumption}[Weak upwards lattice property]\label{assumption:lattice}
For \textbf{every} $c\in \CC$, $\SS\in \Gtimes$ and $\{\epsilon,M\}\subset (0,\infty)$,  $(J(d,\SS))_{d\in \DD(c,\SS)}$ has {the $(\epsilon,M)$-upwards-lattice property}.
\end{assumption}
We shall make it explicit in the sequel when this assumption is in effect.

The following theorem gives some conditions which guarantee Assumption~\ref{assumption:lattice} holds.
\begin{theorem}\label{prop:lattice}
Let $c\in \CC$, $\SS\in \Gtimes$ and $\epsilon\in [0,\infty)$, $M\in (0,\infty]$. Then  Condition \ref{lattice:one}$\Rightarrow$\ref{lattice:two}$\Rightarrow $\ref{lattice:three}, where
\begin{enumerate}[(C1)]
\item\label{lattice:one} (i) For all $d\in \DD(c,\SS)$, $\PP^d=\PP^c$ 
\item[and]

(ii) For all $\{d,d'\}\subset \DD(c,\SS)$ and $G\in \mathcal{G}^c_{\SS^c}$, there is a $d''\in \DD(c,\SS)$ such that $J(d'')\geq M\land [\mathbbm{1}_GJ(d)+\mathbbm{1}_{\Omega\backslash G}J(d')]-\epsilon$ $\PP^c$-a.s. 
\item\label{lattice:two} For all $\{d,d'\}\subset \DD(c,\SS)$ and $G\in \mathcal{G}^c_{\SS^c}$, there is a $d''\in \DD(c,\SS)$ such that 
$$J(d'',\SS)\geq M\land [\mathbbm{1}_GJ(d,\SS)+\mathbbm{1}_{\Omega\backslash G}J(d',\SS)]-\epsilon\quad\PP^c\text{-a.s.}$$
\item\label{lattice:three} $(J(d,\SS))_{d\in \DD(c,\SS)}$ has the $(\epsilon,M)$-upwards-lattice property. 

\end{enumerate}
\end{theorem}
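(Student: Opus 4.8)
The plan is to establish the two implications $\ref{lattice:one}\Rightarrow\ref{lattice:two}$ and $\ref{lattice:two}\Rightarrow\ref{lattice:three}$ separately. Both reduce to short manipulations of conditional expectations relative to $\GG^c_{\SS^c}$, the only genuine inputs being the coherence of the system (stability under stopping, together with $c\sim_\SS d''$ whenever $d''\in\DD(c,\SS)$) and, for the first implication, the hypothesis $\PP^d=\PP^c$ of \ref{lattice:one}(i). The second implication then falls out from a single well-chosen conditioning set $G$.

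For $\ref{lattice:one}\Rightarrow\ref{lattice:two}$, I would fix $\{d,d'\}\subset\DD(c,\SS)$ and $G\in\GG^c_{\SS^c}$ and feed them into \ref{lattice:one}(ii) to obtain $d''\in\DD(c,\SS)$ realising the stated pathwise bound $\PP^c$-a.s. The first step is to recognise $J(d'',\SS)$ as a bona fide $\PP^c$-conditional expectation: since $d''\in\DD(c,\SS)$ we have $d''\sim_\SS c$, so stability under stopping gives $\GG^{d''}_{\SS^{d''}}=\GG^c_{\SS^c}$ and $\PP^{d''}\vert_{\GG^{d''}_{\SS^{d''}}}=\PP^c\vert_{\GG^c_{\SS^c}}$, while \ref{lattice:one}(i) promotes this to $\PP^{d''}=\PP^c$; hence $J(d'',\SS)=\EE^{\PP^c}[J(d'')\mid\GG^c_{\SS^c}]$, and likewise $J(d,\SS)=\EE^{\PP^c}[J(d)\mid\GG^c_{\SS^c}]$ and $J(d',\SS)=\EE^{\PP^c}[J(d')\mid\GG^c_{\SS^c}]$. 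I would then apply $\EE^{\PP^c}[\,\cdot\mid\GG^c_{\SS^c}]$ to the bound of \ref{lattice:one}(ii): because $G\in\GG^c_{\SS^c}$ the indicators $\1_G$ and $\1_{\Omega\setminus G}$ factor out of the conditional expectation, reducing matters to the conditional payoffs $J(d,\SS)$ and $J(d',\SS)$, and monotonicity of conditional expectation should return the inequality of \ref{lattice:two} — the one subtlety being the truncation at $M$, which I address below.

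The implication $\ref{lattice:two}\Rightarrow\ref{lattice:three}$ is then immediate upon specialising $G$. Given $\{d,d'\}\subset\DD(c,\SS)$, both $J(d,\SS)$ and $J(d',\SS)$ are $\GG^c_{\SS^c}$-measurable, so $G\eqdef\{J(d,\SS)\geq J(d',\SS)\}\in\GG^c_{\SS^c}$ is an admissible input to \ref{lattice:two}. On $G$ the combination $\1_G J(d,\SS)+\1_{\Omega\setminus G}J(d',\SS)$ equals $J(d,\SS)$ and on $\Omega\setminus G$ it equals $J(d',\SS)$; in both cases this is $J(d,\SS)\lor J(d',\SS)$, so the combination equals $J(d,\SS)\lor J(d',\SS)$ identically. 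Since $M\land(a\lor b)=(M\land a)\lor(M\land b)$, the right-hand side of \ref{lattice:two} for this $G$ is exactly $(M\land J(d,\SS))\lor(M\land J(d',\SS))-\epsilon$, and \ref{lattice:two} becomes precisely the $(\epsilon,M)$-upwards-lattice inequality \ref{lattice:three}.

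I expect the only delicate point to lie in the first implication, namely in transferring the truncation at level $M$ through the conditional expectation. Because $x\mapsto M\land x$ is concave, conditioning does not interchange with it in the favourable direction, so one must avoid a naive appeal to Jensen's inequality: the right move is to observe that it suffices to bound $J(d'',\SS)$ below by the \emph{un-truncated} conditional combination $\1_G J(d,\SS)+\1_{\Omega\setminus G}J(d',\SS)-\epsilon$, which already dominates the $M$-truncated target of \ref{lattice:two} since $M\land(\cdot)\leq(\cdot)$. Securing this un-truncated bound from \ref{lattice:one}(ii) — rather than the weaker quantity a careless conditioning would produce — is the crux of the argument; the remainder is purely a matter of propagating the $\PP^c$-a.s. qualifiers and treating the value $M=\infty$ alongside the finite case.
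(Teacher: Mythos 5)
Your treatment of (C2)$\Rightarrow$(C3) is correct and is exactly the paper's argument: the paper conditions (C2) on $G=\{J(d,\SS)>J(d',\SS)\}$, and your weak-inequality variant $G=\{J(d,\SS)\geq J(d',\SS)\}$ works identically via $M\land(a\lor b)=(M\land a)\lor(M\land b)$.

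The implication (C1)$\Rightarrow$(C2) is where the genuine gap lies, and you have located it without closing it. You correctly note that conditioning the bound of (C1)(ii) on $\GG^c_{\SS^c}$ under $\PP^c$ (using (C1)(i) and stability under stopping to identify $J(d'',\SS)=\EE^{\PP^c}[J(d'')\vert \GG^c_{\SS^c}]$) only yields
$$J(d'',\SS)\;\geq\;\EE^{\PP^c}\bigl[\,M\land\bigl(\1_G J(d)+\1_{\Omega\backslash G}J(d')\bigr)\,\big\vert\,\GG^c_{\SS^c}\bigr]-\epsilon,$$
and that by conditional Jensen for the concave map $x\mapsto M\land x$ the right-hand side is \emph{below}, not above, the target $M\land[\1_G J(d,\SS)+\1_{\Omega\backslash G}J(d',\SS)]-\epsilon$. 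But your proposed repair --- to establish instead the un-truncated bound $J(d'',\SS)\geq \1_G J(d,\SS)+\1_{\Omega\backslash G}J(d',\SS)-\epsilon$ --- is never carried out, and for finite $M$ it cannot be: hypothesis (C1)(ii) carries no information whatsoever about $J(d)$ and $J(d')$ above the level $M$, so you are trying to deduce a \emph{stronger} conclusion from a weaker hypothesis. What you call ``the crux of the argument'' is precisely the missing (and unprovable) step. When $M=\infty$ there is no issue --- (C1)(ii) then \emph{is} the un-truncated pathwise bound, and your conditioning argument (like the paper's one-line proof, ``follows by conditioning on $\GG^c_{\SS^c}$ under $\PP^c$'', which is exactly the careless conditioning you warn against) goes through.

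Moreover, no argument can close the gap, because for finite $M$ the implication is false. Sketch: take $\Omega=\{a,b\}\times\{0,1\}$ with uniform $\PP$, coordinates $\xi,\eta$; in discrete time let every control have $\GG_1=\sigma(\xi)$ and let $\SS$ be the deterministic time $1$, so $\GG^c_{\SS^c}=\sigma(\xi)$; put $G\eqdef\{\xi=a\}$ and let $\DD(c,\SS)$ consist of three controls with common law $\PP$ and payoffs $J(d)=2\cdot\1_{G\cap\{\eta=1\}}$, $J(d')=2\cdot\1_{(\Omega\backslash G)\cap\{\eta=1\}}$, $J(e)=\1_{\{\eta=1\}}$. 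With $M=1$, $\epsilon=0$, condition (C1) holds: every payoff truncated at $1$ is $\leq \1_{\{\eta=1\}}=J(e)$, so $e$ witnesses (C1)(ii) for every pair and every $G'\in\sigma(\xi)$. Yet $J(d,\SS)=\1_G$, $J(d',\SS)=\1_{\Omega\backslash G}$, $J(e,\SS)=\tfrac12$, so no control satisfies $J(d'',\SS)\geq 1\land[\1_G J(d,\SS)+\1_{\Omega\backslash G}J(d',\SS)]=1$ a.s., and (C2) (indeed (C3)) fails. This also shows the paper's own proof is only valid in the case it actually uses ($M=\infty$, $\epsilon=0$, as verified in its continued examples); your instinct that the truncation is the delicate point was sound, but the fixed-$(\epsilon,M)$ statement with $M<\infty$ cannot be rescued by a cleverer conditioning or by any other means.
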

\begin{proof}
Implication \ref{lattice:one}$\Rightarrow$\ref{lattice:two} follows by conditioning on $\GG^c_{\SS^c}$ under $\PP^c$. Implication \ref{lattice:two}$\Rightarrow $\ref{lattice:three} follows by taking $G=\{J(d,\SS)>J(d',\SS)\}\in \GG^c_{\SS^c}$. 
\end{proof}

\begin{remark}\label{remark:extend}
\leavevmode
\begin{enumerate}[(i)]
\item\label{remark:extend:i} The upwards lattice property of Assumption~\ref{assumption:lattice} represents a direct connection between $((\PP^c)_{c\in \CC}, J)$ on the one hand and $((\GG^c)_{c\in \CC}, \Gtimes, (\DD(c,\SS))_{c\in \CC,\SS\in \Gtimes})$ on the other. It is weaker than insisting that every system $(J(c,\SS))_{c\in \CC}$ be upwards-directed (i.e. having the $(0,\infty)$-upwards lattice property), but still sufficient to allow one to conclude Bellman's (super)martingale principle (Theorem~\ref{Bell1}). 
\item\label{remark:extend:ii} A more precise understanding of the relationship between the applicability of Bellman's principle, and the linkage between  $((\PP^c)_{c\in \CC}, J)$ and $((\GG^c)_{c\in \CC},\Gtimes, (\DD(c,\SS))_{c\in \CC,\SS\in \Gtimes})$ remains open. In particular, as we have seen in Example~\ref{counter2}, it appears that whether or not a control time can feature as a member of $\Gtimes$ (whilst maintaining Bellman's principle) is related to whether or not the act of controlling can be effected at that time.
\item\label{remark:extend:a} It may be assumed without loss of generality (in the sense which follows) that $\{0,\infty\}\subset\Gtimes $. Specifically, we can always simply extend the family $\DD$, by defining $\DD(c,\infty)\eqdef \{c\}$ and $\DD(c,0)\eqdef \CC$ for each $c\in \CC$ -- neither the provisions of Section~\ref{setting} (Definitions~\ref{setting}, ~\ref{setting:controldyn} and ~\ref{assumption:temporalconsistency}) nor the validity of Assumption~\ref{assumption:lattice} being affected. 
\item Assumption~\ref{assumption:lattice} is of course trivially satisfied when the filtrations $\GG$ all consist of (probabilistically) trivial $\sigma$-fields. 
\end{enumerate}
\end{remark}

\noindent \textbf{Example~\ref{ex:cts_field} continued.}\label{ex-field-contd} {\sl We verify that in Example~\ref{ex:cts_field}, if the base $(\Omega,\FF)$ is Blackwell or the members of $\Gtimes$ are deterministic, then Property~\ref{lattice:one} from Theorem~\ref{prop:lattice} holds with $M=\infty$, $\epsilon=0$. 

Let $c_1\sim_\SS c_2$, $A\in \mathcal{G}^{c_1}_{\SS^{c_1}}=\mathcal{G}^{c_2}_{\SS^{c_2}}$. It is enough to show that $c\eqdef c_1\mathbbm{1}_A+c_2\mathbbm{1}_{\Omega\backslash A}\in \CC$ (since then we will have $J(c)=J(c_1)\mathbbm{1}_A+J(c_2)\mathbbm{1}_{\Omega\backslash A}$, thanks to the ``path-by-path'' dependence of $\Gamma^c$ and $Z^c$ on $c$; and clearly $c\sim_\SS c_1,c_2$). The control $c$ is a 
right-continuous finite variation $O$-valued process with initial value $o_0$, satisfying the requisite integrability condition on $f^-$ and $g^+$. Also, if $c_1$ takes values in $H$, $H\in \HH$ and $c_2$ takes values in $H'$, $H'\in \HH$, then since $\HH$ is upwards  directed with respect to union, there is an $H''\in\HH$ with $H''\supset H\cup H'$, and clearly $c$ takes values in $H''$. 

Noting that the filtration $\GG^c$ is included in the filtration $\FF$, it remains to check that $c$ is $\GG^{c}$-adapted $\langle$predictable$\rangle$. Setting $P\eqdef \SS^{c_1}=\SS^{c_2}$, it is sufficient to argue that  $c\mathbbm{1}_{\llbracket 0,P\rrbracket}=c_1\mathbbm{1}_{\llbracket 0,P\rrbracket}=c_2\mathbbm{1}_{\llbracket 0,P\rrbracket}$, $c\mathbbm{1}_{\llparenthesis P,\infty\rrparenthesis}\mathbbm{1}_A=c_1\mathbbm{1}_{\llparenthesis P,\infty\rrparenthesis}\mathbbm{1}_A$ and $c\mathbbm{1}_{\llparenthesis P,\infty\rrparenthesis}\mathbbm{1}_{\Omega\backslash A}=c_2\mathbbm{1}_{\llparenthesis P,\infty\rrparenthesis}\mathbbm{1}_{\Omega\backslash A}$ are all $\GG^c$-adapted $\langle$predictable$\rangle$.  

To this end, note that $P$ is a stopping time of $\GG^{c_1}$ and of $\GG^{c_2}$, and $(c_1)^{P}=c^{P} =(c_2)^{P}$, hence $(R^{c_1})^{P}=(R^c)^{P} =(R^{c_2})^{P}$. Then  $\GG^c_t\vert_{\{t\leq P\}}=\GG^{c_1}_t\vert_{\{t\leq P\}}=\GG^{c_2}_t\vert_{\{t\leq P\}}$, whilst $\GG^{c}_t\vert_{\{P<t\}\cap A}=\GG^{c_1}_t\vert_{\{P<t\}\cap A}$ and $\GG^{c}_t\vert_{\{P<t\}\cap ( \Omega\backslash A)}=\GG^{c_2}_t\vert_{\{P<t\}\cap (\Omega\backslash A)}$. Also, by  Theorem~\ref{theorem:galmarino}, Proposition~\ref{lemma:continuous}, and Proposition~\ref{proposition:information_increases} to follow (or trivially when $P$ is deterministic), all the events $\{t\leq P\}$, $(\Omega \backslash A)\cap\{P<t\}$ and $A\cap \{P<t\}$ belong to $\sigma((R^{c_1})^{P\land t})=\sigma((R^{c_2})^{P\land t})=\sigma((R^c)^{P\land t})\subset \sigma((R^{c})^t)=\GG^{c}_t$ (and, of course, $\GG^{c_1}_t\cap\GG^{c_2}_t$) for all $t\in [0,\infty)$.

Now apply Lemma~\ref{lemma:adap-predictable}.}
\finish

\noindent \textbf{Example~\ref{search} continued.}  \label{ex-search-contd} {\sl Using the same method as above, one verifies that, in  Example~\ref{search}, if the base $(\Omega,\FF)$ is Blackwell or the members of $\Gtimes$ are deterministic, then again Property~\ref{lattice:one} from Theorem~\ref{prop:lattice} holds with $M=\infty$, $\epsilon=0$. We leave the details to the reader. \finish}

\begin{theorem}\label{proposition:consistency}
[Cf. \cite[p. 94, Lemma~1.14]{elkaroui}.] Under  Assumption~\ref{assumption:lattice}, for any $c\in \CC$, $\TT\in \Gtimes$ and any sub-$\sigma$-field $\AA$ of $\GG^c_{\TT^c}$: $$\EEc[V(c,\TT)\vert\AA]=\PP^c\vert_\AA\mbox{-}\esssup_{d\in \DD(c,\TT)}\EE^{\PP^d}[J(d)\vert \AA]\quad \PP^c\text{-a.s.}$$ 
In particular, 
\begin{equation}\label{opty}
\EE^{\PP^c}V(c,\TT)=\sup_{d\in\DD(c,\TT)}\EE^{\PP^d}J(d).
\end{equation} 
\end{theorem}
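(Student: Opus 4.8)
The plan is to fix $c\in\CC$, $\TT\in\Gtimes$ and a sub-$\sigma$-field $\AA\subset\GG^c_{\TT^c}$, and to reduce everything to a single essential-supremum identity computed under $\PP^c$. For $d\in\DD(c,\TT)$ we have $c\sim_\TT d$, so stability under stopping gives $\GG^d_{\TT^d}=\GG^c_{\TT^c}\supset\AA$ and $\PP^d\vert_{\GG^d_{\TT^d}}=\PP^c\vert_{\GG^c_{\TT^c}}$. Writing $\xi_d\eqdef J(d,\TT)=\EE^{\PP^d}[J(d)\vert\GG^d_{\TT^d}]$, the tower property together with the agreement of the two measures on $\GG^d_{\TT^d}$ shows $\EE^{\PP^d}[J(d)\vert\AA]=\EE^{\PP^c}[\xi_d\vert\AA]$ ($\PP^c$-a.s.). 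Hence, setting $V\eqdef V(c,\TT)=\esssup_{d\in\DD(c,\TT)}\xi_d$ (essential supremum relative to $\PP^c\vert_{\GG^c_{\TT^c}}$) and $R\eqdef\PP^c\vert_\AA\mbox{-}\esssup_{d\in\DD(c,\TT)}\EE^{\PP^c}[\xi_d\vert\AA]$, the asserted identity becomes precisely $\EE^{\PP^c}[V\vert\AA]=R$, $\PP^c$-a.s. By Proposition~\ref{proposition:V-system}, $V$ and each $\xi_d$ have $\PP^c$-integrable negative part, so all conditional expectations below are well defined.

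The inequality $\EE^{\PP^c}[V\vert\AA]\geq R$ is immediate: $\xi_d\leq V$ for every $d$ yields $\EE^{\PP^c}[\xi_d\vert\AA]\leq\EE^{\PP^c}[V\vert\AA]$, and as the right-hand side is $\AA$-measurable it dominates the essential supremum $R$. For the reverse inequality I would show $\EE^{\PP^c}[M\wedge V\vert\AA]\leq R$ for each fixed $M\in(0,\infty)$ and then let $M\uparrow\infty$ via conditional monotone convergence (valid since $M\wedge V\uparrow V$ and $M\wedge V\geq -V^-$ with $V^-$ $\PP^c$-integrable). Fix $M$. The truncated family $\zeta_d\eqdef M\wedge\xi_d$ is bounded above by $M$, $\PP^c$-integrable, and has $\esssup_d\zeta_d=M\wedge V$ (the map $x\mapsto M\wedge x$ commutes with countable suprema realizing the essential supremum). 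Crucially, it is $\epsilon$-approximately upwards directed for \emph{every} slack $\epsilon>0$: since $\zeta_d\vee\zeta_{d'}\leq M$, the $(\epsilon,M)$-upwards-lattice property of Assumption~\ref{assumption:lattice} furnishes $d''$ with $\zeta_{d''}=M\wedge\xi_{d''}\geq\zeta_d\vee\zeta_{d'}-\epsilon$.

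The one genuine obstacle is that this slack destroys monotonicity, so no increasing selection $(\zeta_{d_n})$ exists outright; I resolve this by exploiting that Assumption~\ref{assumption:lattice} grants \emph{every} $\epsilon>0$. Choose a summable sequence $(\epsilon_n)$ with $\eta\eqdef\sum_n\epsilon_n$ as small as desired, and a sequence $(e_n)$ with $\sup_n\zeta_{e_n}=M\wedge V$ $\PP^c$-a.s. Build $(d_n)$ recursively: $d_1\eqdef e_1$, and given $d_n$ apply the $(\epsilon_n,M)$-property to $d_n,e_{n+1}$ to obtain $d_{n+1}$ with $\zeta_{d_{n+1}}\geq\zeta_{d_n}\vee\zeta_{e_{n+1}}-\epsilon_n$. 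Then the shifted sequence $u_n\eqdef\zeta_{d_n}+\sum_{k<n}\epsilon_k$ is nondecreasing (each shift absorbs the loss $\epsilon_n$), satisfies $u_{n+1}\geq\zeta_{e_{n+1}}$ and $u_1=\zeta_{e_1}$, so that $u_\infty\eqdef\lim_n u_n$ obeys $M\wedge V\leq u_\infty\leq M\wedge V+\eta$; moreover $u_n\geq u_1\geq-\xi_{e_1}^-$ is bounded below by a $\PP^c$-integrable random variable.

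Conditional monotone convergence then gives $\EE^{\PP^c}[u_n\vert\AA]\uparrow\EE^{\PP^c}[u_\infty\vert\AA]$. Since $\EE^{\PP^c}[\zeta_{d_n}\vert\AA]=\EE^{\PP^c}[u_n\vert\AA]-\sum_{k<n}\epsilon_k\leq R$ (using $\zeta_{d_n}\leq\xi_{d_n}$), letting $n\to\infty$ yields $\EE^{\PP^c}[u_\infty\vert\AA]-\eta\leq R$, whence $\EE^{\PP^c}[M\wedge V\vert\AA]\leq\EE^{\PP^c}[u_\infty\vert\AA]\leq R+\eta$. Letting $\eta\downarrow0$ and then $M\uparrow\infty$ establishes $\EE^{\PP^c}[V\vert\AA]\leq R$, hence the identity. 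Finally, \eqref{opty} is the special case $\AA=\{\emptyset,\Omega\}$, for which both $\EE^{\PP^c}[\,\cdot\,\vert\AA]$ and the conditional essential supremum collapse to $\EE^{\PP^c}$ and the ordinary supremum of the numbers $\EE^{\PP^d}J(d)$.
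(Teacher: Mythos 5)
Your proof is correct and follows essentially the same route as the paper: the reduction $\EE^{\PP^d}[J(d)\vert\AA]=\EE^{\PP^c}[J(d,\TT)\vert\AA]$ via stability under stopping (using $\GG^c_{\TT^c}=\GG^d_{\TT^d}$ and $\PP^c\vert_{\GG^c_{\TT^c}}=\PP^d\vert_{\GG^d_{\TT^d}}$) is exactly the paper's first step, and the remainder of your argument is an inline re-derivation of the paper's Lemma~\ref{lemma:esssup} (the essential-supremum/upwards-lattice exchange), which the paper simply cites, proved there by the same truncation-plus-summable-slack recursion. The only cosmetic difference is bookkeeping: you monotonize the recursively chosen sequence by absorbing the accumulated slacks and apply conditional monotone convergence, whereas the paper's appendix proof keeps the raw sequence and applies conditional Fatou to its $\liminf$.
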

\begin{proof}
By Lemma~\ref{lemma:esssup}, we have, $\PP^c$-a.s.:
\begin{eqnarray*}
\EEc[V(c,\TT)\vert\AA]\!\!\!\!&=&\!\!\!\!\PP^c\vert_{\AA}\mbox{-}\esssup_{d\in \DD(c,\TT)}\EE^{\PP^c}[\EE^{\PP^d}[J(d)\vert \GG^d_{\TT^d}]\vert\AA]\\
&=&\!\!\!\!\PP^c\vert_{\AA}\mbox{-}\esssup_{d\in \DD(c,\TT)}\EE^{\PP^d}[\EE^{\PP^d}[J(d)\vert \GG^c_{\TT^c}]\vert\AA],\text{ since }\GG^c_{\TT^c}=\GG^d_{\TT^d}\text{ and }\PP^c\vert_{ \GG^c_{ \TT^c}}=\PP^d\vert_{\GG^d_{\TT^d}},
\end{eqnarray*}
for $d\sim_\TT c$, from which the claim follows. 
\end{proof}

\begin{theorem}[Bellman's principle]\label{Bell1}
Suppose  that $\{0,\infty\}\subset \Gtimes$. Under Assumption~\ref{assumption:lattice}:
\begin{enumerate}[(B1)]
\item\label{Bell1:1}$(V(c,\SS))_{(c,\SS)\in \CC\times \Gtimes}$ is a $(\CC,\Gtimes)$-supermartingale system. 
\item\label{Bell1:2} If  $c^*\in\CC$ is optimal, then $(V(c^*,\TT))_{\TT\in\Gtimes}$ has a constant $\PP^{c^*}$-expectation (equal to the optimal value $v=\EE^{\PP^{c^*}}J(c^*)$). 
\item\label{Bell1:3}If  $c^*\in\CC$ is optimal and  $\EE^{\PP^{c^*}}J(c^*)<\infty$, then $(V(c^*,\TT))_{\TT\in\Gtimes}$ is a $\Gtimes$-martingale in the sense that 
\begin{itemize}
\item[(i)]for each $\TT\in \Gtimes$, $V(c^*,\TT)$ is $\GG^{c^*}_{\TT^{c^*}}$-measurable and $\PP^{c^*}$-integrable 
\item[and]
\item[(ii)]for any $\{\SS,\TT\}\subset \Gtimes$ with $\SS^d\leq \TT^d$ \{$\PP^d$-a.s.\} for $d\in \DD(c^*,\TT)$,  $$\EE^{\PP^{c^*}}[V(c^*,\TT)\vert \GG^{c^*}_{\SS^{c^*}}]=V(c^*,\SS)\quad\PP^{c^*}\text{-a.s.}$$
\end{itemize}
\item\label{Bell1:4}If $c^*\in\CC$ is conditionally optimal at $\SS\in\Gtimes$ and  $\EE^{\PP^{c^*}}J(c^*)<\infty$, then $c^*$ is conditionally optimal at $\TT$ for any $\TT\in \Gtimes$ satisfying $\TT^d\geq \SS^d$ \{$\PP^d$-a.s.\} for $d\in \DD(c^*,\TT)$. In particular, if $c^*$ is optimal, then it is conditionally optimal at $0$, so that if also $\EE^{\PP^{c^*}}J(c^*)<\infty$, then $c^*$ must be conditionally optimal at any $\SS\in \Gtimes$. 
\end{enumerate}
Regardless of whether or not Assumption~\ref{assumption:lattice} holds:
\begin{enumerate}[(B5)]
\item\label{Bell1:5}If $c^*\in  \CC$ and $\Gtimes$ includes a sequence $(\SS_n)_{n\in\mathbb{N}_0}$ for which 
\begin{itemize}
\item[(i)]$\SS_0=0$, 
\item[(ii)] the family $(V(c^*,\SS_n))_{n\geq 0}$ has a constant $\PP^{c^*}$-expectation and is uniformly integrable, \item[and]
\item[(iii)]$V(c^*,\SS_n)\to V(c^*,\infty)$, $\PP^{c^*}$-a.s. (or even just in $\PP^{c^*}$-probability), as $n\to\infty$,\end{itemize}
then $c^*$ is optimal.
\end{enumerate}
\end{theorem}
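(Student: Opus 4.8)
The plan is to treat (B1) as the engine and to derive (B2)--(B5) from it together with Theorem~\ref{proposition:consistency} (the El Karoui lemma) and the structural axioms of Definition~\ref{setting:controldyn}. For (B1), properties (i)--(iii) of a supermartingale system are already supplied by Proposition~\ref{proposition:V-system}: $V(c,\SS)$ is $\GG^c_{\SS^c}$-measurable, its negative part is $\PP^c$-integrable, and $V(c,\SS)=V(d,\SS)$ whenever $c\sim_\SS d$. So only the supermartingale inequality (iv) requires work. Fix $c\in\CC$ and $\{\SS,\TT\}\subset\Gtimes$ with $\SS^d\le\TT^d$ for $d\in\DD(c,\TT)$. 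First I would note that $c\in\DD(c,\TT)$ (Definition~\ref{setting:controldyn}\ref{controldyn:one}) forces $\SS^c\le\TT^c$, so $\GG^c_{\SS^c}\subset\GG^c_{\TT^c}$ and Theorem~\ref{proposition:consistency} applies with $\AA=\GG^c_{\SS^c}$, giving $\EEc[V(c,\TT)\mid\GG^c_{\SS^c}]=\PP^c\vert_{\GG^c_{\SS^c}}\mbox{-}\esssup_{d\in\DD(c,\TT)}\EE^{\PP^d}[J(d)\mid\GG^c_{\SS^c}]$. The crucial step is Definition~\ref{setting:controldyn}\ref{controldyn:three}, which under the hypothesis $\SS^d\le\TT^d$ yields $\DD(c,\TT)\subset\DD(c,\SS)$; hence every $d$ appearing in the essential supremum satisfies $d\sim_\SS c$, so by stability under stopping $\GG^d_{\SS^d}=\GG^c_{\SS^c}$ and $\PP^d\vert_{\GG^d_{\SS^d}}=\PP^c\vert_{\GG^c_{\SS^c}}$. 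Consequently $\EE^{\PP^d}[J(d)\mid\GG^c_{\SS^c}]=J(d,\SS)\le V(c,\SS)$ $\PP^c$-a.s., the final inequality holding because $d\in\DD(c,\SS)$ and $V(c,\SS)$ is the essential supremum over $\DD(c,\SS)$. Passing to the essential supremum over $d$ gives $\EEc[V(c,\TT)\mid\GG^c_{\SS^c}]\le V(c,\SS)$, which is (iv).

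For (B2) I would bypass the supermartingale inequality and read the constant expectation straight off equation~\eqref{opty}: $\EE^{\PP^{c^*}}V(c^*,\TT)=\sup_{d\in\DD(c^*,\TT)}\EE^{\PP^d}J(d)$. Since $\DD(c^*,\TT)\subset\CC$ this supremum is $\le v$, while $c^*\in\DD(c^*,\TT)$ gives $\ge\EE^{\PP^{c^*}}J(c^*)=v$; hence it equals $v$ for every $\TT$. Part (B3) then follows by a pinching argument. Integrability of $V(c^*,\TT)$ comes from combining (B2) (finite expectation, using $v<\infty$) with the integrable negative part from Proposition~\ref{proposition:V-system}. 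For the martingale equality, the supermartingale inequality of (B1) gives $\EE^{\PP^{c^*}}[V(c^*,\TT)\mid\GG^{c^*}_{\SS^{c^*}}]\le V(c^*,\SS)$ $\PP^{c^*}$-a.s.; taking $\PP^{c^*}$-expectations of both sides and using (B2) shows both have expectation $v$, and an a.s.\ inequality between integrable random variables with equal expectations must be an a.s.\ equality.

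Part (B4), the propagation of conditional optimality, is where I would use (B1) and (B3) jointly. Suppose $c^*$ is conditionally optimal at $\SS$, i.e.\ $V(c^*,\SS)=J(c^*,\SS)$ $\PP^{c^*}$-a.s., and let $\TT$ satisfy $\TT^d\ge\SS^d$ for $d\in\DD(c^*,\TT)$. By the definition of the conditional payoff and the tower property (using $\SS^{c^*}\le\TT^{c^*}$), $\EE^{\PP^{c^*}}[J(c^*,\TT)\mid\GG^{c^*}_{\SS^{c^*}}]=J(c^*,\SS)$. Combining the always-valid $V(c^*,\TT)\ge J(c^*,\TT)$ with the supermartingale inequality of (B1), I would obtain the chain $J(c^*,\SS)=\EE^{\PP^{c^*}}[J(c^*,\TT)\mid\GG^{c^*}_{\SS^{c^*}}]\le\EE^{\PP^{c^*}}[V(c^*,\TT)\mid\GG^{c^*}_{\SS^{c^*}}]\le V(c^*,\SS)=J(c^*,\SS)$, forcing $\EE^{\PP^{c^*}}[V(c^*,\TT)-J(c^*,\TT)\mid\GG^{c^*}_{\SS^{c^*}}]=0$; as the integrand is nonnegative and (by (B3)) integrable, it must vanish a.s., i.e.\ $c^*$ is conditionally optimal at $\TT$. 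The ``in particular'' clause follows because at $\SS=0$ one has $\DD(c^*,0)=\CC$ (Definition~\ref{setting:controldyn}\ref{controldyn:zero}) and $\GG^{c^*}_0$ trivial, so $V(c^*,0)=\sup_{d\in\CC}\EE^{\PP^d}J(d)=v=J(c^*,0)$; optimality then yields conditional optimality at $0$, and the first half propagates it to every $\SS$.

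Finally, (B5) needs no lattice property, and here the point is to identify the two endpoints of the sequence. At $\TT=\infty$, Definition~\ref{setting:controldyn}\ref{controldyn:zero} gives $\DD(c^*,\infty)=\{c^*\}$, so $V(c^*,\infty)=J(c^*,\infty)=\EE^{\PP^{c^*}}[J(c^*)\mid\GG^{c^*}_\infty]$ and hence $\EE^{\PP^{c^*}}V(c^*,\infty)=\EE^{\PP^{c^*}}J(c^*)$. At $\SS_0=0$, since $\DD(c^*,0)=\CC$ and $\GG^{c^*}_0$ is trivial, $V(c^*,0)$ is the essential supremum of the \emph{constants} $\EE^{\PP^d}J(d)$, which is simply $\sup_{d\in\CC}\EE^{\PP^d}J(d)=v$; I would stress that this step uses no lattice hypothesis. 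Constant expectation then gives $\EE^{\PP^{c^*}}V(c^*,\SS_n)=v$ for all $n$, while uniform integrability upgrades the assumed a.s.\ (or in-probability) convergence to $L^1$ convergence, so $\EE^{\PP^{c^*}}V(c^*,\infty)=\lim_n\EE^{\PP^{c^*}}V(c^*,\SS_n)=v$. Combining this with the endpoint identity yields $\EE^{\PP^{c^*}}J(c^*)=v$, i.e.\ $c^*$ is optimal. I expect the main obstacle to be (B1)(iv): correctly threading the two measures through Theorem~\ref{proposition:consistency}, so that the $\PP^d$-conditional expectations appearing there are recognised, via Definition~\ref{setting:controldyn}\ref{controldyn:three} and stability under stopping, as the terms $J(d,\SS)$ dominated by $V(c,\SS)$. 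Once this matching is in place, parts (B2)--(B5) reduce to routine pinching and limit arguments.
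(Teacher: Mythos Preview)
Your proof is correct and follows essentially the same route as the paper: (B1) via Theorem~\ref{proposition:consistency} combined with $\DD(c,\TT)\subset\DD(c,\SS)$ and stability under stopping; (B2) directly from \eqref{opty}; (B3) by pinching the supermartingale inequality between equal expectations; (B4) by the same pinching (the paper does it with unconditional expectations, you do it conditionally on $\GG^{c^*}_{\SS^{c^*}}$, which is a cosmetic difference); and (B5) by identifying the two endpoints and passing to the $L^1$ limit via uniform integrability.

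One small slip: in your argument for (B4) you justify integrability of $V(c^*,\TT)-J(c^*,\TT)$ by appealing to (B3), but (B3) assumes $c^*$ is \emph{optimal}, whereas in (B4) you only have conditional optimality at $\SS$. This does not create a gap: the integrand is nonnegative, and a nonnegative random variable with zero conditional expectation vanishes a.s.\ without any integrability hypothesis (alternatively, integrability of $V(c^*,\TT)$ already follows from your own chain, since $\EE^{\PP^{c^*}}[V(c^*,\TT)\mid\GG^{c^*}_{\SS^{c^*}}]\le J(c^*,\SS)$ and $\EE^{\PP^{c^*}}J(c^*)<\infty$).
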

\begin{remark}
Recall that the assumption $\{0,\infty\}\subset \Gtimes$ is innocuous (see Remark~\ref{remark:extend}\ref{remark:extend:a}).
\end{remark}

\begin{proof}
Let $\{\SS,\TT\}\subset \Gtimes$ and $c\in \CC$ with $\SS^d\leq\TT^d$ \{$\PP^d$-a.s.\} for $d\in \mathcal{D}(c,\TT)$. Then, since $\SS^c\leq \TT^c$ \{$\PP^c$-a.s.\},  $\GG^c_{\SS^c}\subset \GG^c_{\TT^c}$, and  (by adaptive dynamics) $\DD(c,\TT)\subset \DD(c,\SS)$, so that we obtain via Theorem~\ref{proposition:consistency}, $\PP^c$-a.s.,
\begin{eqnarray*}
\EEc[V(c,\TT)\vert\GG^c_{\SS^c}]
&=&\PP^c\vert_{\GG^c_{\SS^c}}\mbox{-}\esssup_{d\in \DD(c,\TT)}\EE^{\PP^d}[J(d)\vert \GG_{\SS^c}^c]\\
&\leq &\PP^c\vert_{\GG^c_{\SS^c}}\mbox{-}\esssup_{d\in \DD(c,\SS)}\EE^{\PP^d}[J(d)\vert \GG_{\SS^c}^c]=V(c,\SS)
\end{eqnarray*}
(since $\GG^c_{\SS^c}=\GG^d_{\SS^d}$ for $s\sim_\SS d$) which,  together with Proposition~\ref{proposition:V-system},  establishes \ref{Bell1:1}. Assertion \ref{Bell1:2} follows at once from (\ref{opty}). To establish the martingale property \ref{Bell1:3} let $c^*$ be optimal and $\{\SS,\TT\}\subset \Gtimes$ with $\SS ^d\leq \TT^d$ \{$\PP^d$-a.s.\} for $d\in \DD(c^*,\TT)$. Note that by the supermartingale property, $v= \EE^{\PP^{c^*}}\EE^{\PP^{c^*}}[V(c^*,\TT)\vert \GG^{c^*}_{\SS^{c^*}}]\leq  \EE^{\PP^{c^*}}V(c^*,\SS)=v$. So, if $v<\infty$, 
we conclude that $V(c^*,\TT)$ is $\PP^{c^*}$-integrable, and the martingale property follows.

Now if $c^*$ is conditionally optimal at $\SS$, $\EE^{\PP^{c^*}}J(c^*)<\infty$, and $\SS^d\leq \TT^d$ \{$\PP^d$-a.s.\} for $d\in \DD(c^*,\TT)$, then since $V$ is a $(\CC,\Gtimes)$-supermartingale system, $\EE^{\PP^{c^*}}J(c^*)=\EE^{\PP^{c^*}}J(c^*,\SS)=\EE^{\PP^{c^*}}V(c^*,\SS)\geq \EE^{\PP^{c^*}}V(c^*,\TT)$. On the other hand, for sure, $V(c^*,\TT)\geq J(c^*,\TT)$, $\PP^{c^*}$-a.s., so $\EE^{\PP^{c^*}}V(c^*,\TT)\geq \EE^{\PP^{c^*}} J(c^*,\TT)=\EE^{\PP^{c^*}}J(c^*)$ hence we must have  $V(c^*,\TT)= J(c^*,\TT)$, $\PP^{c^*}$-a.s., i.e. $c^*$ is conditionally optimal at $\TT$. So \ref{Bell1:4} holds.

Finally, notice that, under the assumptions in \ref{Bell1:5}, $V(c^*,\SS_n)\to V(c^*,\infty)$ in $L^1(\PP^{c^*})$, as $n\to\infty$, and so 
$$v=\sup_{c\in \CC}\EE^{\PP^c}J(c)=\EE^{\PP^{c^*}} V(c^*,0)=\EE^{\PP^{c^*}} V(c^*,\SS_n)\nto \EE^{\PP^{c^*}} V(c^*,\infty)=\EE^{\PP^{c^*}} J(c^*)$$
\end{proof}

\begin{theorem}[Supermartingale envelope]\label{BellChar}
Under  Assumption~\ref{assumption:lattice}, 
$V$ is the \textbf{minimal $(\CC,\Gtimes)$-supermartingale system $W$} satisfying the terminal condition 
\begin{equation}\label{term}
W(c,\infty)\geq \EE^{\PP^c}[J(c)\vert\GG^c_\infty]\quad\PP^c\text{-a.s. for each }c\in \CC.
\end{equation}
\end{theorem}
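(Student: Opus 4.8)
The plan is to establish two things: that $V$ is itself a $(\CC,\Gtimes)$-supermartingale system satisfying the terminal condition~(\ref{term}), and that every other such system $W$ dominates $V$ pointwise a.s., which together say $V$ is the minimal one. Throughout I take $\{0,\infty\}\subset\Gtimes$, which is innocuous by Remark~\ref{remark:extend}\ref{remark:extend:a} and is in any case needed for the expressions $W(c,\infty)$ and $V(c,\infty)$ to make sense.

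For the first part, the supermartingale property of $V$ is exactly Theorem~\ref{Bell1}\ref{Bell1:1}. For the terminal condition I note that $\DD(c,\infty)=\{c\}$ by Condition~\ref{controldyn:zero} of Definition~\ref{setting:controldyn} (equivalently, by the extension in Remark~\ref{remark:extend}\ref{remark:extend:a}), so the essential supremum defining $V(c,\infty)$ collapses to a single term and $V(c,\infty)=J(c,\infty)=\EE^{\PP^c}[J(c)\vert\GG^c_\infty]$ $\PP^c$-a.s. Hence $V$ satisfies~(\ref{term}), in fact with equality.

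For minimality, fix a $(\CC,\Gtimes)$-supermartingale system $W$ obeying~(\ref{term}) and fix $(c,\SS)\in\CC\times\Gtimes$; the goal is to bound $W(c,\SS)$ below by $J(d,\SS)$ for every $d\in\DD(c,\SS)$ and then pass to the essential supremum. Given such a $d$ (so $c\sim_\SS d$), I apply the supermartingale inequality of $W$ to the control $d$ with the pair of times $\SS$ and $\infty$: since $\DD(d,\infty)=\{d\}$ the hypothesis $\SS^{d'}\leq\infty$ for $d'\in\DD(d,\infty)$ is trivially met, giving $\EE^{\PP^d}[W(d,\infty)\vert\GG^d_{\SS^d}]\leq W(d,\SS)$ $\PP^d$-a.s. (the requisite negative-part integrability is provided by the supermartingale axiom together with $\EE^{\PP^d}J(d)^-<\infty$). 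Combining this with the terminal bound $W(d,\infty)\geq\EE^{\PP^d}[J(d)\vert\GG^d_\infty]$ and the tower property yields $W(d,\SS)\geq\EE^{\PP^d}[J(d)\vert\GG^d_{\SS^d}]=J(d,\SS)$ $\PP^d$-a.s.

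It then remains to transfer this from $\PP^d$ to $\PP^c$ and take the supremum over $d$. Since $c\sim_\SS d$, stability under stopping gives $\GG^c_{\SS^c}=\GG^d_{\SS^d}$ and $\PP^c\vert_{\GG^c_{\SS^c}}=\PP^d\vert_{\GG^d_{\SS^d}}$; as $W(d,\SS)$ and $J(d,\SS)$ are both $\GG^d_{\SS^d}$-measurable, the inequality $W(d,\SS)\geq J(d,\SS)$ also holds $\PP^c$-a.s., while property~(ii) of a supermartingale system gives $W(c,\SS)=W(d,\SS)$ $\PP^c$-a.s. Hence $W(c,\SS)\geq J(d,\SS)$ $\PP^c$-a.s. for every $d\in\DD(c,\SS)$, and since $W(c,\SS)$ is $\GG^c_{\SS^c}$-measurable it dominates the essential supremum, so $W(c,\SS)\geq V(c,\SS)$ $\PP^c$-a.s. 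The one point that needs care is precisely this measure-change bookkeeping on the common $\sigma$-field $\GG^c_{\SS^c}=\GG^d_{\SS^d}$; once stability under stopping is invoked the rest is routine. I note finally that Assumption~\ref{assumption:lattice} is used only in the first part (to know $V$ is a supermartingale system): the minimality argument itself does not require it.
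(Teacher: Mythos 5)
Your proof is correct and follows essentially the same route as the paper's: both parts match — the supermartingale property and terminal condition of $V$ via Theorem~\ref{Bell1} and $\DD(c,\infty)=\{c\}$, and minimality via the chain $W(c,\SS)=W(d,\SS)\geq \EE^{\PP^d}[W(d,\infty)\vert \GG^d_{\SS^d}]\geq J(d,\SS)$ followed by taking the essential supremum. You merely spell out the measure-transfer bookkeeping (stability under stopping) and the applicability of the supermartingale inequality between $\SS$ and $\infty$, which the paper leaves implicit.
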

\begin{proof}
That $V$ is a $(\CC,\Gtimes)$-supermartingale system satisfying (\ref{term}) is clear from the definition of $V$ and Theorem~\ref{Bell1}. Next, let $W$ be a $(\CC,\Gtimes)$-supermartingale system satisfying (\ref{term}). Then for all $(c,\TT)\in \CC\times\Gtimes$ and $d\in \DD(c,\TT)$, 
 $$W(c,\TT)=W(d,\TT)\geq \EE^{\PP^d}[W(d,\infty)\vert \GG^d_{\TT^d}]\geq  \EE^{\PP^d}[\EE^{\PP^d}[J(d)\vert\GG^d_\infty]\vert \GG^d_{\TT^d}]=J(d,\TT)\;\; \PP^c\text{-a.s. and  }\PP^d\text{-a.s.}$$ 
 Thus $W(c,\TT)\geq V(c,\TT)$, $\PP^c$-a.s.
\end{proof}
\begin{remark}
Notice that we recover from Theorem~\ref{BellChar} the Snell envelope characterisation of solutions to the problem of optimally stopping a c\`adl\`ag process $X$ adapted to a filtration $\FF=(\FF_t)_{t\in T}$ by taking $\CC$ to be the collection of $\FF$-stopping times, $\tau$,  and setting, for $t\in T$, $\GG^\tau_t=\FF_{t\wedge\tau}$. 
\end{remark}

\noindent \textbf{Example~\ref{counter2} continued.}\label{ex:control-at-stopping-times:contd}  {\sl In order to complete the argument in Example~\ref{counter2}, we establish the relevant lattice property. 
For arbitrary $X\in \CC$, $n\in \mathbb{N}_0$, $G\in \GG_{S_n}$, and $\{Y,Z\}\subset \CC$ with $Y^{S_n}=X^{S_n}=Z^{S_n}$, we find that the control
$U$, given by 
$$
U_t=Y_t\1_{[0,S_n]}(t)+Y_t\1_{(S_n,\infty)}(t)\1_G+Z_t\1_{(S_n,\infty)}(t)\1_{G^c},
$$
which coincides with $Y$ and $Z$ on $[0,S_n]$ and then with $Y$ on $G$ and $Z$ on 
$G^c$ strictly after $S_n$, is conditionally admissible at time $S_n$ for $X$. By Theorem~\ref{prop:lattice}\ref{lattice:one} we see that the family $\{\EE^\PP[J(Y)\vert \GG_{S_n}]:Y\in \CC, Y^{S_n}=X^{S_n}\}$ is directed downwards for each $n\in \mathbb{N}_0$. Hence, setting $S_\infty\eqdef\infty$, one can apply Theorem~\ref{Bell1} with $\Gtimes\eqdef\{S_n:n\in \mathbb{N}_0\cup \{\infty\}\}$ and with $\DD(X,S_n)\eqdef \{Y\in \CC:Y^{S_n}=X^{S_n}\}$ for $X\in \CC$ and $n\in \mathbb{N}_0\cup \{\infty\}$. 

We note that it also follows from Theorem~\ref{Bell1} and \textbf{(1)} on p.~\pageref{counter2:one}, that if $\Gtimes$ 
 contains  a strictly positive finite deterministic time $t$ (with the equivalence class $\DD(c,t)$ being defined in the obvious way), then Assumption~\ref{assumption:lattice} \emph{must fail}.}
 \finish

\section {A solved example}\label{section:formal_example}

We now conclude Example~\ref{bms}. { Recall the notation of section~\ref{section:setting} and Example \ref{bms} (as continued on pp.~\pageref{ex-BMs-contd} and~\pageref{ex-BMs-contd-1}) and note that all the conditions of section~\ref{section:setting} are satisfied. Recall also that Example \ref{bms} is a special case of Example~\ref{ex:cts_field}, which (see p.~\pageref{ex-field-contd}) satisfies the weak upwards lattice property (Assumption~\ref{assumption:lattice}), provided $(\Omega,\HH)$ is Blackwell or $\Gtimes$ contains  only deterministic times.

Now  we shall give the solution in some fairly straightforward cases.}

\begin{proposition}\label{bmsoln}
\leavevmode
\begin{enumerate}[(a)]
\item\label{example:case:a}If $K(z,t)=-2z/\alpha$, $(z,t)\in \mathbb{R}\times [0,\infty)$,  then the optimal payoff, $V$ satisfies $V(x)=x/\alpha$ and any control achieves it.
\item\label{example:case:b}If $K$ has the form:
\footnotesize $$K(z,t)=\int_\mathbb{R}\left(\frac{\vert \sqrt{t}u-z\vert-\vert z\vert}{\alpha}+\frac{e^{-\gamma\vert \sqrt{t}u-z\vert}-e^{-\gamma \vert z\vert}}{\alpha\gamma}\right)\phi(u)du+\mathbbm{1}_{(0,\infty)}(z)L(z,t),\, (z,t)\in \mathbb{R}\times [0,\infty),$$ 
\normalsize 
where  $\gamma\eqdef  \sqrt{2\alpha}$, $\phi$ is the standard normal density and $L(z,t)$ is nonnegative, measurable, and  of polynomial growth (uniformly in $t$),
then  $V(x)$ is  a symmetric function of the parameter $x$. Moreover, letting $c^\epsilon$ be the control which waits an  amount of time $\epsilon$ both after each time it jumps and at time zero, and thereafter jumps at the first entrance time of $Z^c$ into $(-\infty,0]$,  for any such $K$, $\EE^\PP J(c^\epsilon)\to V(x)=\frac{\gamma\vert x\vert+e^{-\gamma \vert x\vert}}{\alpha\gamma}$, as $\epsilon\downarrow 0$.
\end{enumerate}
\end{proposition}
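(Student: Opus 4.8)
The plan is to treat both cases through a single Dynkin-type device. For a candidate function $h$ set
$$\widetilde M^h_t := e^{-\alpha t}h(Z^c_t) + \int_0^t e^{-\alpha s}Z^c_s\,ds - \sum_{s\le t}e^{-\alpha s}K(Z^c_{s-},\tau^c_{s-}),$$
observing that $\vert dc\vert$ is the counting measure on the jump times of $c$, that $\widetilde M^h_0 = h(x)$, and that (since $e^{-\alpha t}h(Z^c_t)\to 0$ as $t\to\infty$ under the running-supremum bounds $\vert Z^c\vert\le\overline{B^0}+\overline{B^1}$) one has $\widetilde M^h_\infty = J(c)$. The two ingredients I would record first are the pathwise dynamics of $Z^c$: between consecutive jumps of $c$ it moves as the observed Brownian motion, so $dZ^c_t = dB^c_t$ with quadratic-variation rate $1$; and at a jump time $s$ one has $Z^c_s = \sqrt{\tau^c_{s-}}\,U - Z^c_{s-}$ with $U$ standard normal and independent of $\GG^c_{s-}$, whence $\EE^\PP[Z^c_s\vert\GG^c_{s-}] = -Z^c_{s-}$. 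Applying Itô between jumps produces a continuous drift $e^{-\alpha t}\big(\tfrac12 h''(Z^c_t) - \alpha h(Z^c_t) + Z^c_t\big)\,dt$ plus a local-martingale part, while each jump, occurring at a $\GG^c$-predictable time, contributes in $\GG^c_{s-}$-conditional mean $e^{-\alpha s}\big(\EE^\PP[h(Z^c_s)\vert\GG^c_{s-}] - h(Z^c_{s-}) - K(Z^c_{s-},\tau^c_{s-})\big)$.

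For part \ref{example:case:a}, take $h(z) = \ell(z) := z/\alpha$, which solves $\tfrac12\ell'' - \alpha\ell + z = 0$ identically, killing the continuous drift. Since $\EE^\PP[\ell(Z^c_s)\vert\GG^c_{s-}] - \ell(Z^c_{s-}) = -2Z^c_{s-}/\alpha$, which equals $K(Z^c_{s-},\tau^c_{s-})$ exactly when $K(z,t) = -2z/\alpha$, the conditional jump contribution vanishes as well; so $\widetilde M^\ell$ is a martingale (uniformly integrable, by the running-sup bounds and the $\epsilon$-separation of jumps) and $\EE^\PP J(c) = \EE^\PP\widetilde M^\ell_\infty = \ell(x) = x/\alpha$ for every $c$. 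Hence $V(x) = x/\alpha$ and every control is optimal.

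For part \ref{example:case:b}, put $h(z) = w(z) := \vert z\vert/\alpha + e^{-\gamma\vert z\vert}/(\alpha\gamma)$. I would first check two analytic facts: the Dirac masses at $0$ arising from $\vert z\vert$ and from $e^{-\gamma\vert z\vert}$ cancel (precisely because the coefficient is $1/(\alpha\gamma)$), so $w\in C^2(\mathbb{R})$ with $w''(z) = \gamma e^{-\gamma\vert z\vert}/\alpha$ and $\tfrac12 w'' - \alpha w + \vert z\vert\equiv 0$; and, directly from the Gaussian integral, $\EE[w(\sqrt t\,U - z)] - w(z)$ equals the integral term in the definition of $K$, so that $K(z,t) = \EE[w(\sqrt t\,U - z)] - w(z) + \mathbbm{1}_{(0,\infty)}(z)L(z,t)$. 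The first fact turns the continuous drift into $e^{-\alpha t}(Z^c_t - \vert Z^c_t\vert)\,dt\le 0$ (vanishing exactly where $Z^c_t\ge 0$); the second, via the jump law $Z^c_s = \sqrt{\tau^c_{s-}}U - Z^c_{s-}$, turns the conditional jump contribution into $-e^{-\alpha s}\mathbbm{1}_{(0,\infty)}(Z^c_{s-})L(Z^c_{s-},\tau^c_{s-})\le 0$. Thus $\widetilde M^w$ is a supermartingale (integrability again from the running-sup bounds, the $\epsilon$-separation, and the polynomial growth of $K$ and $L$), giving $\EE^\PP J(c)\le w(x)$ for every admissible $c$, i.e. $V(x)\le w(x)$; as $w$ is even, the claimed symmetry of $V$ will follow once this bound is shown to be attained.

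The matching lower bound, and what I expect to be the main obstacle, is the analysis of $c^\epsilon$. Collecting the slack in the supermartingale yields the exact deficit $w(x) - \EE^\PP J(c^\epsilon) = 2\,\EE^\PP\!\int_0^\infty e^{-\alpha t}(Z^{c^\epsilon}_t)^-\,dt + \EE^\PP\!\sum_s e^{-\alpha s}\mathbbm{1}_{(0,\infty)}(Z^{c^\epsilon}_{s-})L(Z^{c^\epsilon}_{s-},\tau^{c^\epsilon}_{s-})$, where $(\cdot)^-$ denotes the negative part. Because $c^\epsilon$ jumps only at the first entrance of $Z^{c^\epsilon}$ into $(-\infty,0]$ after each mandatory $\epsilon$-wait, every jump has $Z^{c^\epsilon}_{s-}\le 0$, so the $L$-sum vanishes identically; moreover $Z^{c^\epsilon}$ stays $\ge 0$ except during the length-$\epsilon$ windows following each jump. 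I would therefore bound the negative-part integral window-by-window: on the window after the $k$-th jump the post-jump value is $\sqrt{\tau_k}\,U_k$, and the conditional contribution is of order $\epsilon\,e^{-\alpha s_k}\sqrt{\tau_k}$, so the whole deficit is controlled by $\epsilon\,\EE^\PP\sum_k e^{-\alpha s_k}\sqrt{\tau_k}$. The delicate point is to show this tends to $0$ as $\epsilon\downarrow 0$: the crude estimate $e^{-\alpha s_k}\sqrt{\tau_k}\le C_\alpha e^{-\alpha s_{k-1}}$ only yields $O(1)$, so one must exploit the renewal structure of $c^\epsilon$ — a negative post-jump value forces a short ($\approx\epsilon$) cycle which then returns a positive value, so that the short cycles inflating the discounted jump-count carry only $O(\sqrt\epsilon)$-size values, while genuinely large post-jump values occur only at the ends of order-one excursions, of which there are finitely many in discounted time. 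This separation of regimes is what makes $\epsilon\,\EE^\PP\sum_k e^{-\alpha s_k}\sqrt{\tau_k}\to 0$, whence $\EE^\PP J(c^\epsilon)\to w(x)$ and $V(x) = w(x) = (\gamma\vert x\vert + e^{-\gamma\vert x\vert})/(\alpha\gamma)$, symmetric as claimed.
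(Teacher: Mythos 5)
Your treatment of part (a), and of the supermartingale half of part (b), is essentially the paper's own argument: the paper packages exactly your decomposition (It\^o between jumps, plus compensation of the predictable jumps via the law $Z^c_s=\sqrt{\tau^c_{s-}}\,U-Z^c_{s-}$ with $U$ standard normal independent of $\GG^c_{s-}$) into a Verification Lemma (Lemma~\ref{verif}, conditions W(i)--W(iv)), and then applies it with $h(z)=z/\alpha$ in case (a) (all conditions holding with equality, no boundary $l$) and with $h(z)=\psi(\vert z\vert)=\vert z\vert/\alpha+e^{-\gamma\vert z\vert}/(\alpha\gamma)$ and $l=0$ in case (b). Your two ``analytic facts'' ($h\in C^2$ with $\tfrac12 h''-\alpha h+\vert z\vert\equiv 0$ precisely because $\gamma=\sqrt{2\alpha}$, and the Gaussian integral in $K$ being $\EE[h(\sqrt{t}\,U-z)]-h(z)$) are the same computations, and your observation that the $L$-sum vanishes for $c^\epsilon$ because it only jumps when $Z^{c^\epsilon}_{s-}\leq 0$ is exactly the paper's $D_\infty(c^\epsilon)=0$.

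The genuine divergence --- and the gap --- is in showing that the remaining slack $2\,\EE^\PP\int_0^\infty e^{-\alpha t}(Z^{c^\epsilon}_t)^-\,dt$ vanishes as $\epsilon\downarrow 0$. You reduce it, by conditioning at the jump times, to the claim $\epsilon\,\EE^\PP\sum_k e^{-\alpha J_k}\sqrt{\tau_k}\to 0$, but you do not prove this claim, and the heuristic you offer is wrong as stated: a negative post-jump value does \emph{not} force a short cycle (during the $\epsilon$-window $Z^{c^\epsilon}$ may recover above $0$, after which the control waits for the next hit of $0$), and a short cycle need not ``return a positive value'' (the next post-jump value is $\vert Z_{J_{k+1}-}\vert+\sqrt{\epsilon}\,U_{k+1}$, which is negative with positive probability). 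Fortunately your reduction is sound and the claim is true for a much simpler, purely pathwise reason: since the intervals $(J_{k-1},J_k]$ are disjoint, $\sum_k e^{-\alpha J_k}\tau_k\leq\int_0^\infty e^{-\alpha t}\,dt=1/\alpha$, while the $\epsilon$-separation $J_k\geq k\epsilon$ gives $\sum_k e^{-\alpha J_k}\leq\sum_{k\geq 1}e^{-\alpha k\epsilon}\leq 1/(\alpha\epsilon)$; Cauchy--Schwarz then yields $\sum_k e^{-\alpha J_k}\sqrt{\tau_k}\leq \alpha^{-1}\epsilon^{-1/2}$, so your deficit is $O(\sqrt{\epsilon})$. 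Filling the gap this way gives an argument that is, if anything, shorter than the paper's, which proceeds along a different route: there one shows that, pathwise, the occupation time of $\{Z^{c^\epsilon}<-a\}$ on $[0,T]$ is at most $\epsilon$ times the number of holding periods during which such a dip occurs, and that this number is bounded uniformly in $\epsilon$ (a dip below $-a$ forces one of $B^0,B^1$ to move by more than $a$ between the start of the previous holding period and the end of the current one, and uniform continuity of the Brownian paths on $[0,T]$ caps how often this can happen); dominated convergence then upgrades this to the required $L^1$-convergence of the slack to $0$.
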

\begin{remark}
We see here, in the jump times of $c^\epsilon$, an example of a whole sequence of non-control-constant control times (members of $\Gtimes'$, provided $(\Omega,\HH)$ is Blackwell).\color{black}
\end{remark}

According to Bellman's principle (Theorem~\ref{Bell1}) and the strong Markov property, for each $c\in \CC$, the following process (where $V$ is, by a slight abuse of notation, the value function\footnote{More precisely, for $z\in \mathbb{R}$, $u\in [0,\infty)$, $V(z,u)$ is the optimal payoff of the related optimal control problem in which, \emph{ceteris paribus}, $B^1=z+H_{u+\cdot}$, for a Brownian motion $H$ independent of $B^0$.}): \begin{equation}\label{Bellman_process}
\tilde S_t^c\eqdef \int_0^te^{-\alpha s}Z^c_sds-\int_{(0,t]}e^{-\alpha s}K(Z^c_{s-},\tau^c_{s-})\vert dc_s\vert+e^{-\alpha t}V(Z^c_t,\tau^c_t),
\end{equation}
 should be a $(\GG^c,\PP)$-supermartingale (in $t\in [0,\infty)$). Moreover, if an optimal strategy $c^*$ exists, then $\tilde S^{c^*}$ should be a $(\GG^{c^*},\PP)$-martingale (or,  when dealing with a sequence/net of optimizing controls, the corresponding processes should, \emph{in expectation}, `be increasingly close to being one'). 
 
 This leads us to frame the following:

\begin{lemma}[Verification Lemma]\label{verif}
\begin{enumerate}[UP]
Suppose that
\item\label{V4}the function $K(z,t)$ has uniformly (in $t$) polynomial growth in $z$,
\end{enumerate}
\begin{enumerate}[V(i)]
and that the function $h:\mathbb{R}\to\mathbb{R}$ satisfies
\item\label{V2} $h$ is of class $C^1$ and is twice differentiable, with a second derivative, which is continuous except possibly at finitely many points, where the left and right derivatives exist and are continuous from the left, respectively right; 
\item\label{V3}$h$, $h'$ and $h''$  have polynomial growth.
\end{enumerate}
Let $l\in \R$ and suppose that:
\begin{enumerate}[W(i)]
\item\label{eq:supermtg:cts} $ z-\alpha h(z)+\frac{1}{2}h''(z)\leq 0$, for a.e. $z\in \mathbb{R}$;
\item\label{eq:supermtg:jump}$-K(z,t)+\int_{\mathbb{R}}[h(\sqrt{t}u-z)-h(z)]\phi(u)du\leq 0$, for all $z\in \mathbb{R},\, t\in [0,\infty)$;
\item\label{eq:mtg:cts} $z-\alpha h(z)+\frac{1}{2}h''(z)= 0$, for a.e. $z\geq -l$;
\item\label{eq:mtg:jump} $-K(z,t)+\int_{\mathbb{R}}[h(\sqrt{t}u-z)-h(z)]\phi(u)du=0$, for all $z\leq -l,\, t\in (0,\infty)$.
\end{enumerate}
Then $V=h$ and 
an optimizing (as $\epsilon\downarrow 0$) net of optimal controls is
 $(c^\epsilon)_{\epsilon>0}$ where $c^\epsilon$ is the control which waits for a period of 
 time $\epsilon$ after each jump of $c^\epsilon$ and also at the start, and thereafter 
 switches the observed Brownian motion precisely at the first entrance time of 
 $Z^{c^\epsilon}$ into the set $(-\infty,-l]$. Remark that $c^\epsilon$ is previsible with 
 respect to $\GG^{c^\epsilon}$. 
\end{lemma}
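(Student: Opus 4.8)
The plan is a verification argument. I would show that the candidate Bellman process obtained by substituting $h$ for $V$ in (\ref{Bellman_process}),
\[
M^c_t\eqdef \int_0^t e^{-\alpha s}Z^c_s\,ds-\int_{(0,t]}e^{-\alpha s}K(Z^c_{s-},\tau^c_{s-})\,\vert dc\vert_s+e^{-\alpha t}h(Z^c_t),\qquad t\in[0,\infty),
\]
is a $(\GG^c,\PP)$-supermartingale for \emph{every} $c\in\CC$, and becomes a martingale ``in the limit'' along the net $(c^\epsilon)_{\epsilon>0}$. Here the ansatz is that the value function is lag-independent, $V(z,u)=h(z)$, which is why $\tau^c$ enters $M^c$ only through the jump cost; the same argument run from arbitrary initial data $(z,u)$ then yields the full two-variable identity $V(z,u)=h(z)$. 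Granting the supermartingale claim, since $e^{-\alpha t}h(Z^c_t)\to 0$ as $t\to\infty$ (by \ref{V3} and the pathwise bound $\vert Z^c\vert\le\overline{B^0}+\overline{B^1}$ together with the discounting) one has $M^c_\infty=J(c)$ while $M^c_0=h(x)$; the supermartingale property then gives $\EE^\PP J(c)\le h(x)$ for all $c$, hence $V(x)\le h(x)$, and the asymptotic martingale property gives $\EE^\PP J(c^\epsilon)\to h(x)$, hence $V(x)\ge h(x)$, yielding $V=h$ and the optimality of $(c^\epsilon)$.

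The analytic heart is an application of the (Meyer--)It\^o formula to $e^{-\alpha t}h(Z^c_t)$. First I would record the dynamics of $(Z^c,\tau^c)$: between consecutive jumps of $c$ the subtracted term $B^{1-c}_{\sigma^c}$ is frozen, so $Z^c$ evolves as the currently observed Brownian motion, $dZ^c_t=dB^c_t$ with continuous quadratic variation accruing at unit rate, while $\tau^c$ increases at unit rate; at a jump time $s$ one computes $Z^c_s=-Z^c_{s-}+\big(B^{1-c_{s-}}_s-B^{1-c_{s-}}_{\sigma^c_{s-}}\big)$, and the crucial point is that this increment of the \emph{unobserved} Brownian motion over the lag $\tau^c_{s-}$ is independent of $\GG^c_{s-}$, so that, conditionally on $\GG^c_{s-}$, $Z^c_s$ has law $N(-Z^c_{s-},\tau^c_{s-})$, i.e. is distributed as $\sqrt{\tau^c_{s-}}\,U-Z^c_{s-}$ with $U$ standard normal. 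Since $h$ is $C^1$ with an absolutely continuous derivative (by \ref{V2}), It\^o's formula applies with the a.e.\ second derivative $h''$, the finitely many points of discontinuity of $h''$ being irrelevant because Brownian motion spends zero Lebesgue time there. Assembling the continuous and jump contributions, $M^c$ decomposes as a local martingale (the stochastic integral $\int e^{-\alpha s}h'(Z^c_s)\,dB^c_s$ plus the compensated sum of jumps) with compensator whose continuous part is $e^{-\alpha t}\big[Z^c_t-\alpha h(Z^c_t)+\tfrac12 h''(Z^c_t)\big]\,dt$ and whose jump part at $s$ is $e^{-\alpha s}\big[-K(Z^c_{s-},\tau^c_{s-})+\int_\RR\big(h(\sqrt{\tau^c_{s-}}w-Z^c_{s-})-h(Z^c_{s-})\big)\phi(w)\,dw\big]$. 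Conditions \ref{eq:supermtg:cts} and \ref{eq:supermtg:jump} make both pieces non-positive, so the compensator is non-increasing; the polynomial-growth hypotheses \ref{V4} and \ref{V3}, together with $\vert Z^c\vert\le\overline{B^0}+\overline{B^1}$ (all of whose moments are finite) and the $\epsilon$-separation of the jumps of $c$, guarantee that the local martingale is a genuine martingale and that all terms are integrable, so $M^c$ is a true supermartingale.

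For the lower bound I would analyse $c^\epsilon$. It jumps exactly at the first entrance of $Z^{c^\epsilon}$ into $(-\infty,-l]$ (so, by path-continuity, at $Z^{c^\epsilon}_{s-}=-l$, where \ref{eq:mtg:jump} makes the jump drift vanish) and otherwise keeps $Z^{c^\epsilon}\ge -l$, where \ref{eq:mtg:cts} makes the continuous drift vanish; the only loss of the martingale property comes from the enforced waiting of length $\epsilon$ after each jump and at time $0$, during which $Z^{c^\epsilon}$ may dip below $-l$ and the continuous drift \ref{eq:supermtg:cts} be strictly negative. I would bound this defect by an error term that, thanks to the discounting and to the finiteness of the expected number of jumps, is $O(\epsilon)$ uniformly, so that $\EE^\PP M^{c^\epsilon}_\infty=h(x)-o(1)$ as $\epsilon\downarrow 0$; this gives $\EE^\PP J(c^\epsilon)\to h(x)$ and closes the argument. (That $c^\epsilon$ is $\GG^{c^\epsilon}$-previsible follows because each jump is triggered by a $\GG^{c^\epsilon}$-entrance time preceded by a deterministic delay.) I expect the main obstacle to be the jump step: justifying rigorously that, conditionally on $\GG^c_{s-}$, the post-jump value $Z^c_s$ has law $N(-Z^c_{s-},\tau^c_{s-})$ and that the sum of jumps is compensated by $\int_\RR\big(h(\sqrt{\tau^c_{s-}}w-Z^c_{s-})-h(Z^c_{s-})\big)\phi(w)\,dw$. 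This is precisely where the control-dependent information $\GG^c$ enters: one must use that the controller observes only $B^c$, so that the unobserved increment over the lag is genuinely independent of the accumulated observation --- a fact the coherent-control-system apparatus of Sections~\ref{section:setting}--\ref{section:bellman} is designed to make legitimate. A secondary difficulty is the passage $\epsilon\downarrow 0$, where the defect from the forced waiting periods must be controlled uniformly.
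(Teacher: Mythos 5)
Your proposal follows the paper's own route: the same candidate process (the paper's $S^c$), the same Meyer--It\^o decomposition into a continuous predictable part, a pure-jump predictable part and a martingale part, the supermartingale property for every $c\in\CC$ from W(i)--W(ii), the $L^1$ convergence $S^c_t\to J(c)$ from the polynomial-growth hypotheses, and optimality of the net $(c^\epsilon)$ from W(iii)--W(iv). (Whether one then quotes Theorem~\ref{BellChar} or, as you do, takes expectations directly in the supermartingale inequality is immaterial.) Two of your steps deserve comment. The jump compensation you call the ``main obstacle'' is exactly what the paper resolves: since $c$ is $\GG^c$-predictable with $\epsilon$-separated jumps, each jump time is a predictable stopping time, and for a predictable time $\tau$ and a suitably integrable $\mathcal{Z}_\tau$-measurable $U$, the compensator of $U\1_{\llbracket \tau,\infty\rrparenthesis}$ is $\EE[U\vert\mathcal{Z}_{\tau-}]\1_{\llbracket \tau,\infty\rrparenthesis}$; combined with your (correct) observation that, conditionally on $\GG^c_{s-}$, the post-jump value has law $N(-Z^c_{s-},\tau^c_{s-})$, this yields precisely the compensator the paper writes down. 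Also, your parenthetical that $c^\epsilon$ jumps ``at $Z^{c^\epsilon}_{s-}=-l$ by path-continuity'' is not quite right: if $Z^{c^\epsilon}$ lies below $-l$ at the end of a waiting period, the switch occurs immediately, with $Z^{c^\epsilon}_{s-}<-l$ strictly. This is harmless only because W(iv) is assumed for \emph{all} $z\le -l$, which is what makes the jump defect vanish identically, as in the paper.

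The genuine gap is your treatment of the limit $\epsilon\downarrow 0$. You bound the continuous defect by ``$O(\epsilon)$ uniformly, thanks to the discounting and to the finiteness of the expected number of jumps''. That mechanism fails: by the very $\epsilon$-separation built into $\CC$, the number of jumps of $c^\epsilon$ on a horizon $[0,T]$ is of order $T/\epsilon$, so (number of waiting periods) $\times\,\epsilon$ is $O(T)$, not $O(\epsilon)$; the expected number of jumps is finite for each fixed $\epsilon$ but is not bounded uniformly in $\epsilon$. What is actually needed is a bound on the number of waiting periods during which $Z^{c^\epsilon}$ dips below the critical level, and this is a property of the Brownian paths, not of the jump count: if $Z^{c^\epsilon}_s<-a$ (with $a+l>0$) during the $k$-th waiting period, then, since $Z^{c^\epsilon}_{J_k-}\le -l$ at the preceding switch, the newly observed Brownian motion must have moved by more than $a+l$ over the interval reaching back to the $(k-1)$-th jump; such intervals for alternate $k$ are pairwise disjoint, and by uniform continuity of the Brownian paths on compacts the number of disjoint intervals carrying an oscillation of fixed positive size is bounded pathwise, independently of $\epsilon$. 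Hence $\Leb\{s\le T: Z^{c^\epsilon}_s<-a\}\le C(\omega)\epsilon\to 0$ a.s., giving convergence in $\PP\otimes\Leb$-measure, after which dominated convergence (via the polynomial bounds and $\vert Z^{c^\epsilon}\vert\le\overline{B^0}+\overline{B^1}$) yields $C_\infty(c^\epsilon)\to 0$ in $L^1$ --- convergence without any uniform $O(\epsilon)$ rate. This oscillation argument (which the paper relies on when it asserts the convergence in measure is ``clear from the definition of $c^\epsilon$'') is the missing idea; as stated, your final step does not go through.
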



\begin{proof}
For each control $c$, define the process $S^c$ by
$$
S^c_t\eqdef \int_0^te^{-\alpha s}Z^c_sds-\int_{(0,t]}e^{-\alpha s}K(Z^c_{s-},\tau^c_{s-})\vert dc\vert_s+e^{-\alpha t}h(Z^c_t)
$$
(as in \eqref{Bellman_process}). 

The semimartingale decomposition of $S^c$ may then be effected relative to the completed measure $\overline{\PP}$ and the usual augmentation $\overline{\GG^c}_+$ of $\GG^c$, with respect to which $Z^c$ is a semimartingale (indeed, its jump part is clearly of finite variation, whilst its continuous part is, in fact, a Brownian motion relative to the augmentation of the natural filtration of $(B^0,B^1)$). Thanks to\ref{V2} and \ref{V3} we  obtain, by the It\^o-Tanaka-Meyer formula \cite[p. 214, Theorem~IV.70, p. 216, Corollary~IV.1]{protter}, $\overline{\PP}$-a.s. for all $t\in [0,\infty)$: 
\begin{eqnarray}\label{doob-meyer}
S^c_t&=&h(x)+\underbrace{\int_0^t e^{-\alpha s}Z^c_{s-}ds}_{\eqdef C^{1}}+ \underbrace{\int_{(0,t]}e^{-\alpha s}(-K(Z_{s-}^c,\tau^c_{s-}))\vert dc\vert_s}_{\eqdef D^{1}}+\underbrace{\int_0^te^{-\alpha s} (-\alpha)h(Z^c_{s-})ds}_{\eqdef C^{2}}\\\nonumber
&&+\overbrace{\int_0^t e^{-\alpha s}h'(Z^c_{s-})dZ^c_s}^{(1)}+\underbrace{\int_0^te^{-\alpha s}\frac{1}{2}h''(Z^c_{s-})d\overbrace{[Z^c]^{\text{cts}}_s}^{s}}_{\eqdef C^{3}}+\sum_{0<s\leq t}e^{-\alpha s}\left[\overbrace{\Delta h(Z^c_s)}^{(2)}\overbrace{-h'(Z^c_{s-})\Delta Z^c_s}^{(1)}\right].
\end{eqnarray}
Note that the  parts labelled  (1) add to 
\begin{equation}\label{mtg1}
\underbrace{\overbrace{\int_0^te^{-\alpha s}h'(Z^c_{s-})d(Z^c)^{\text{cts}}_s}^{(1)}}_{\eqdef M^{1}},
\end{equation}
which is a $(\overline{\GG^c}_+,\overline{\PP})$-martingale in $t\in [0,\infty)$ (since $\vert Z^c\vert\leq \overline{B^0}+\overline{B^1}$). On the other hand, the compensator of the  term labelled (2) is 
\begin{equation}\label{compensator}
\underbrace{\overbrace{\int_{(0,t]} \vert dc\vert_s e^{-\alpha s}\left[\int_{\mathbb{R}}du\phi(u)\left(h(\sqrt{\tau^c_{s-}}u-Z^c_{s-})-h(Z^c_{s-})\right)\right]}^{(2)}}_{\eqdef D^{2}},
\end{equation} 
making 
\begin{equation}\label{mtg2}
\underbrace{\overbrace{\int_{(0,t]} \vert dc\vert_s e^{-\alpha s}\left[\Delta h(Z^c_s)-\int_{\mathbb{R}}du\phi(u)\left(h(\sqrt{\tau^c_{s-}}u-Z^c_{s-})-h(Z^c_{s-})\right)\right]}^{(2)}}_{\eqdef M^{2}}
\end{equation}
 into a $(\GG^c,\PP)$-martingale (in $t\in [0,\infty)$). For, if $\tau$ is a predictable stopping time with respect to some filtration (in continuous time) $\mathcal Z$, $U$ is a $\mathcal{Z}_\tau$-measurable random variable, and $\QQ$ a probability measure with $\QQ[\vert U\vert\mathbbm{1}_{[0,t]}\circ \tau]<\infty$ for each $t\in [0,\infty)$, then the compensator of $U\mathbbm{1}_{\llbracket \tau,\infty\rrparenthesis}$ (relative to $(\mathcal{Z},\QQ)$) is $\QQ[U\vert \mathcal{Z}_{\tau-}]\mathbbm{1}_{\llbracket \tau,\infty\rrparenthesis}$. This fact may be applied to each jump time of the $\GG^c$-predictable process $c$ (since $\vert Z^c\vert\leq \overline{B^0}+\overline{B^1}$). Then linearity allows us to conclude that $M^{2}$ is a $(\G^c,P)$-martingale (thanks to the `$\epsilon$-separation' of the jumps of $c$).

Note  that the properties of being a c\`adl\`ag (super)martingale \cite[p. 173, Lemma II.67.10]{rogers} or predictable process (of finite variation) are preserved when passing to the usual augmentation of a filtered probability space. Therefore it follows that,  relative to $(\overline{\GG^c}_+,\overline{\PP})$, $M\eqdef M^{1}+M^{2}$ is a martingale, whilst $D^{}\eqdef D^{1}+D^{2}$  (respectively  $C\eqdef C^{1}+C^{2}+C^{3}$) is a pure-jump (respectively continuous) predictable process of finite variation. 
Now conditions \ref{eq:supermtg:cts} and \ref{eq:supermtg:jump} show that $C$ and $D^{}$ 
are nonincreasing and so we have obtained the Doob-Meyer decomposition of $S^c=h(x)+M+D^{}+C$ (\cite[p. 32, Corollary~3.16]{jacod} \cite[p. 412, Theorem~22.5]{kallenberg}), and
 $S^c$ is  a $(\overline{\GG^c}_+,\overline{\PP})$-supermartingale.  
 
 Assumptions \ref{V3} and \ref{V4} ensure that 
 $S^c_t\lto J(c)\eqdef S^c_\infty$ and so we conclude from Theorem~\ref{BellChar} (applied with $W=S$ and $\Gtimes=[0,\infty]$) that
 $$h\geq V.
 $$
 
 To establish the reverse inequality, we will show that
 $\E^\PP J(c^\epsilon)\rightarrow h(x)\text{ as }\epsilon\downarrow 0$.

First, note that since $S^c\lto J(c)$ it is sufficient to check that  $C_\infty(c^\epsilon)$ and
$D^{}_\infty(c^\epsilon)$,  the limiting  values of the continuous and pure-jump components in the Doob-Meyer decomposition of $S^{c^\epsilon}$, converge to 0 in $L^1$  as $\epsilon\downarrow 0$. 
Now from 
\ref{eq:mtg:cts},
$$
C_\infty({c^\epsilon})=\int_0^\infty e^{-\alpha s}[Z^{{c^\epsilon}}_s-\alpha h(Z^{{c^\epsilon}}_s)+\half h''(Z^{{c^\epsilon}}_s)]ds$$
$$=\int_0^\infty e^{-\alpha s}[Z^{{c^\epsilon}}_s-\alpha h(Z^{{c^\epsilon}}_s)+\half h''(Z^{{c^\epsilon}}_s)]\1_{(-\infty,-l)}\circ Z^{{c^\epsilon}}_s ds
$$
and from \ref{V3} we see that to show that this  converges in $L^1$ to 0
it is sufficient, by the Dominated Convergence Theorem, to show that $\1_{(-\infty,-a)}\circ Z^{c^\epsilon}\vert_{[0,T]}\rightarrow 0$, in $\PP\times \text{Leb}\vert_{\mathcal{B}([0,T])}$-measure for each $T\in [0,\infty)$ and $a\in [l,\infty)$, which is clear from the definition of $c^\epsilon$. 

Turning to the jump part, 
$$
D_\infty({c^\epsilon})=\int_0^\infty e^{-\alpha s}
\biggl(\left[\int_{\mathbb{R}}du\phi(u)\left(h(\sqrt{\tau^{c^\epsilon}_{s-}}u-Z^{c^\epsilon}_{s-})-h(Z^{c^\epsilon}_{s-})\right)\right]-K(Z_{s-}^{c^\epsilon},\tau^{c^\epsilon}_{s-})\biggr)
\vert dc\vert_s,
$$
and so  from \ref{eq:mtg:jump} it follows that
$$
D_\infty({c^\epsilon})=\int_0^\infty e^{-\alpha s}
\biggl(\left[\int_{\mathbb{R}}du\phi(u)\left(h(\sqrt{\tau^{c^\epsilon}_{s-}}u-Z^{c^\epsilon}_{s-})-h(Z^{c^\epsilon}_{s-})\right)\right]-K(Z_{s-}^{c^\epsilon},\tau^{c^\epsilon}_{s-})\biggr)\1_{(-l,\infty)}\circ Z^{c^\epsilon}_{s-}
\vert dc\vert_s,
$$ 
and since $c^\epsilon$ never jumps when $Z^{c^\epsilon}\in (-l,\infty)$, we see that $D_\infty(c^\epsilon)=0$.
\end{proof}
It is now quite straightforward to prove Proposition \ref{bmsoln}.

\medskip
\noindent{\em Proof of Proposition \ref{bmsoln}.}

\ref{example:case:a}. Recall that in this case $K(z,t)=-2z/\alpha$.  Taking $h(z)=z/\alpha$
for $z\in \mathbb{R}$, we see that \ref{eq:supermtg:cts}-\ref{eq:supermtg:jump}-\ref{eq:mtg:cts}-\ref{eq:mtg:jump} are all satisfied with equality everywhere, without the qualifications involving the boundary $l$. Taking expectations in \eqref{doob-meyer}, and passing to the limit as $t\to\infty$ via dominated convergence, we see that $h$ is the optimal payoff, and \emph{any} control from $\CC$ realizes it. 

\ref{example:case:b}. Set  $l=0$  and $h(z)\eqdef\psi(\vert z\vert)$, where $\psi(z)\eqdef \frac{\gamma z+e^{-\gamma z}}{\alpha\gamma}$, $z\in \mathbb{R}$. 
 For such an $h$, $h$ is $C^2$, \ref{eq:supermtg:cts} is  satisfied with strict inequality on $z\in (-\infty,0)$; and  \ref{eq:mtg:cts} is satisfied on $[0,\infty)$. Then \ref{eq:mtg:jump} and \ref{eq:supermtg:jump}  are satisfied for $K$ of the form specified in \ref{example:case:b} on p.~\pageref{example:case:b} and the result follows by Lemma~\ref{verif}.  \qed

\section{Stopping times, stopped processes and natural filtrations at stopping times -- informational consistency}\label{appendix:stopped}
\subsection{The nature of information}
We now turn our attention to Question \ref{FiltQ}. We shall investigate: (i) the precise relationship between the sigma-fields of the stopped processes and the natural filtrations of the processes at these stopping times, and (ii) the nature of the stopping times of the processes and of the stopped processes, themselves. Here is an informal statement of the kind of results that we will establish ($\FF^X$ denotes the natural filtration of a process $X$):

\begin{quote}
If $X$ is a process, and $S$ a time, then $S$ is a stopping time of $\FF^X$, if and only if it is a stopping time of $\FF^{X^S}$. When this is so, then $\FF^X_S=\sigma(X^S)$. In particular, if $X$ and $Y$ are two processes, and $S$ is a stopping time of either $\FF^X$ or of $\FF^Y$, with $X^S=Y^S$, then $S$ is a stopping time of $\FF^X$ and $\FF^Y$ both, moreover $\FF^X_S=\sigma(X^S)=\sigma(Y^S)=\FF^Y_S$. Further, if $U\leq V$ are two stopping times of $\FF^X$, $X$ again being a process, then $\sigma(X^U)=\FF^X_U\subset \FF^X_V= \sigma(X^V)$.
\end{quote}

We will perform this study of the nature of information generated by processes in the  `measure-theoretic' and then the `probabilistic' setting. This will mirror the parallel development of the two frameworks for stochastic control from the preceding sections. 

The main findings of this section are as follows: 
\begin{itemize}
\item in the `measure-theoretic' case, Lemma~\ref{lemma:denumerable}, Proposition~\ref{lemma:continuous}, Theorem~\ref{theorem:galmarino}, Theorem~\ref{theorem:observational_consistency} and Proposition~\ref{proposition:information_increases};
\item  for the case with completions, Corollaries~\ref{corollary:observational_consistency_completions_discrete} and~\ref{corollary:stopping_times_completions} (in discrete time) and Proposition~\ref{proposition:represent_predictable}, Corollaries~\ref{corollary:continuous_stopped_filtration_sigma_field_of_stopped},~\ref{corollary:observational_consistency_completions} and~\ref{proposition:information_increases_completions} (in continuous time). 
\end{itemize}
We have already referenced many of these results in the preceding sections.

It emerges that everything that intuitively \emph{ought} to hold, \emph{does} hold, \emph{if} either the time domain is discrete, or else the underlying space is Blackwell (and, when dealing with completions, the stopping times are predictable and the filtration quasi-left-continuous; but see the negative results of Examples~\ref{example:completions_one} and~\ref{example:completions_two}). While we have not been able to drop the ``Blackwell assumption'', we believe many of the results should still hold true under weaker conditions -- this remains open.


\subsection{The `measure-theoretic' case}\label{section:the_abstract_case}
We begin with some relevant definitions. Indeed, the definitions to be introduced presently are (mostly) standard  \cite[passim]{kallenberg,dellacheriemeyer}: we provide their definitions explicitly in order to avoid any ambiguity, to fix notation, and to recall some measure-theoretic facts along the way. 

Throughout the remainder of this section $T=\mathbb{N}_0$ or $T=[0,\infty)$, $\Omega$ is a set and $(E,\Epsilon)$ is a measurable space.  When $t\in T=\mathbb{N}_0$, $[0,t]$ denotes the set $\{0,\ldots,t\}$. A time (on $\Omega$) is a map $\Omega\to S\cup \{\infty\}$. Recall also that $2^X$ denotes the power set of a set $X$ and $\sigma(f)$ denotes the $\sigma$-field generated by a map $f$ (the measurable structure on the codomain being understood from context). 

By a \textbf{process} (on $\Omega$, with time domain $T$ and values in $E$), we mean a collection $X=(X_t)_{t\in T}$ of functions from $\Omega$ into $E$. With $\FF^X_t\eqdef \sigma(X_s:s\in [0,t])$ for $t\in T$,  $\FF^X\eqdef (\FF^X_t)_{t\in T}$ is the \textbf{natural filtration} of $X$. Remark that, for every $t\in T$, $\FF^X_t=\sigma(X\vert_{[0,t]})$, where for $\omega\in \Omega$, $X\vert_{[0,t]}(\omega)\eqdef (X_s(\omega))_{s\in [0,t]}$; $X\vert_{[0,t]}:\Omega\to E^{[0,t]}$ is an $\FF^X_t$/$\Epsilon^{\otimes [0,t]}$-measurable map. For $\omega\in \Omega$, the $\omega$-\textbf{sample path} of $X$, $X(\omega)$, is the function $T\ni t\mapsto X_t(\omega)\in E$.  In this sense $X$ may of course be viewed as an $\FF^X_\infty$/$\Epsilon^{\otimes T}$-measurable map, indeed $\FF_\infty^X=\sigma(X)$. Then $\Im X$ will denote the range (image) of the function $X:\Omega\to E^T$.  Henceforth, unless otherwise made clear, we will consider a process as a map from $\Omega$ into $E^T$. 

If $S:\Omega\to T\cup \{\infty\}$ is a time, the \textbf{stopped process} $X^S$ is defined by $$X^S_t(\omega)\eqdef X_{S(\omega)\land t}(\omega), \quad \omega\in \Omega,\,t\in T.$$ If further $\GG$ is a \textbf{filtration} (i.e. a nondecreasing family of $\sigma$-fields indexed by $T$)  on $\Omega$ and  $S$ is \textbf{$\GG$-stopping time} (i.e. $\{S\leq t\}\in \GG_t$ for all $t\in T$), then 
$$\GG_S\eqdef \{A\in \GG_\infty: A\cap \{S\leq t\}\in \GG_t\text{ for each }t\in T\}$$
is the \textbf{filtration $\GG$ at (the time) $S$}.  Note that, if $T=\mathbb{N}_0$, if $X$ is a $\GG$-adapted process and if $S$ is a $\GG$-stopping time, then  $X^S$ is 
automatically adapted to the stopped filtration $(\GG_{n\land S})_{n\in\mathbb{N}_0}$. For, if $n\in \mathbb{N}_0$ and 
$Z\in \Epsilon$, then $(X^S_{n})^{-1}(Z)=\left(\cup_{m=0}^nX_{m}^{-1}(Z)\cap \{S=m\}\right)\cup  
\left(X_{n}^{-1}(Z)\cap \{n<S\}\right)\in \GG_{S\land n}$.\refstepcounter{dummy} \label{measurability_of_stopped} On 
the other hand, in continuous time, when $T=[0,\infty)$, if $X$ is $\GG$-progressively measurable and if $S$ is a $\GG$-stopping time, then $X^S$ is also adapted to the stopped filtration $(\GG_{t\land S})_{t\in [0, \infty)}$ (and is $\GG$-progressively measurable) \cite[p. 9, Proposition~2.18]{karatzasshreve}. Note also that every right- or left-continuous 
metric space-valued $\GG$-adapted process is automatically $\GG$-progressively measurable (limits of metric space-valued measurable functions being measurable).

Next, for a $\sigma$-field $\FF$ on $\Omega$, define an equivalence relation $\sim$ on $\Omega$, via $$(\omega\sim \omega')\overset{\mathrm{def}}{\Leftrightarrow}(\text{for all }A\in \FF,\mathbbm{1}_A(\omega)=\mathbbm{1}_A(\omega')).$$ Then define the \textbf{atoms} of $(\Omega,\FF)$ to be the equivalence classes of $\Omega$ under $\sim$. The \textbf{Hausdorff space} of $(\Omega,\FF)$ is the corresponding quotient space under $\sim$.

The measurable space $(\Omega,\FF)$ is said to be 
\begin{itemize}
\item[(i)]\textbf{separable} or \textbf{countably generated}, when it admits a countable generating set; 
\item[(ii)]\textbf{Hausdorff}, or \textbf{separated}, when its atoms are the singletons of  $\Omega$ \cite[p. 10]{dellacheriemeyer}; \item[and]
\item[(iii)]\textbf{Blackwell} when its associated Hausdorff space is Souslin  \cite[p. 50, III.24]{dellacheriemeyer}.
\end{itemize}
Furthermore, a Souslin space is a measurable space, which is Borel isomorphic to a Souslin topological space. The latter in turn is a Hausdorff topological space, which is also a continuous image of  a Polish space (i.e. of a completely metrizable separable topological space). Every Souslin measurable space is necessarily separable and separated. \cite[p. 46, III.16; p. 76, III.67]{dellacheriemeyer} For a measurable space, clearly being Souslin is equivalent to being simultaneously Blackwell and Hausdorff. 

The key result for us is Blackwell's Theorem \cite[p. 51 Theorem~III.26]{dellacheriemeyer} (repeated here for the reader's convenience -- we shall use it repeatedly):
\begin{theorem}[Blackwell's Theorem]\label{Black} Let $(\Omega,\FF)$ be a Blackwell space, $\GG$ a sub-$\sigma$-field of $\FF$ and $\SS$ a separable sub-$\sigma$-field of $\FF$. Then $\GG\subset \SS$, if and only if every atom of $\GG$ is a union of atoms of $\SS$. In particular, an $\FF$-measurable real function $g$ is $\SS$-measurable, if and only if $g$ is constant on every atom of $\SS$. \qed
\end{theorem}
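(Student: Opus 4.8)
The final statement is the classical Blackwell Theorem, so the plan is to reconstruct its standard proof from the separation theorem for Souslin (analytic) sets. I would first dispose of the easy ``only if'' implication and then reduce the substantive content to a single lemma about functions. For the trivial direction: if $\GG\subset\SS$, then any two points not separated by $\SS$ are \emph{a fortiori} not separated by $\GG$, so each atom of $\SS$ lies inside a single atom of $\GG$; hence every atom of $\GG$ is a union of atoms of $\SS$. This uses nothing about the space.

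Second, I would isolate the heart of the matter as the ``in particular'' clause and show the set statement follows from it. Granting the functional statement, take the hypothesis that every atom of $\GG$ is a union of atoms of $\SS$, and let $A\in\GG$. Being $\GG$-measurable, $\1_A$ is constant on every atom of $\GG$, and by hypothesis therefore on every atom of $\SS$; the functional statement then gives that $\1_A$ is $\SS$-measurable, i.e. $A\in\SS$, whence $\GG\subset\SS$. Thus everything reduces to proving: an $\FF$-measurable real $g$ that is constant on the atoms of a separable sub-$\sigma$-field $\SS$ is $\SS$-measurable.

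Third, to prove this lemma I would pass to the Hausdorff quotient so as to assume $(\Omega,\FF)$ is Souslin. This is permissible since $\SS\subset\FF$ forces the $\FF$-atoms to refine those of $\SS$, so both $\SS$ and $g$ descend to the quotient, which is Souslin by the very definition of Blackwell. Writing $\SS=\sigma(B_1,B_2,\dots)$ and $\pi\eqdef(\1_{B_n})_{n\in\mathbb{N}}:\Omega\to\{0,1\}^{\mathbb{N}}$, one has $\SS=\sigma(\pi)$ and the atoms of $\SS$ are exactly the fibres of $\pi$. For each rational $r$ the set $\{g>r\}$ lies in $\FF$ and is a union of $\SS$-atoms, hence $\{g>r\}=\pi^{-1}(\pi(\{g>r\}))$; moreover the images $\pi(\{g>r\})$ and $\pi(\{g\le r\})$ are disjoint analytic subsets of $\{0,1\}^{\mathbb{N}}$, being images of $\FF$-measurable sets under a measurable map out of a Souslin space.

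The crux, and the main obstacle, is then the first separation theorem for analytic sets: two disjoint analytic subsets of a Souslin space can be separated by a Borel set. Applying it produces a Borel $C\subset\{0,1\}^{\mathbb{N}}$ with $\pi(\{g>r\})\subset C$ and $\pi(\{g\le r\})\cap C=\emptyset$; pulling back and using saturation yields $\pi^{-1}(C)=\{g>r\}\in\SS$. Letting $r$ range over the rationals shows $g$ is $\SS$-measurable, which completes the lemma and hence the theorem. The delicate points I would verify carefully are that the images under $\pi$ genuinely are analytic, and that the separation theorem is invoked in the Souslin (not merely Polish) setting; both are standard consequences of the Souslin structure, but they are exactly where the Blackwell hypothesis does its work — without it one only obtains that $A$ is $\SS$-saturated, which is strictly weaker than $\SS$-measurability.
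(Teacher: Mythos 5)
Your proof is correct, but there is nothing in the paper to compare it against: the paper does not prove this statement at all. Theorem~\ref{Black} is imported verbatim from Dellacherie--Meyer \cite[p.~51, Theorem~III.26]{dellacheriemeyer}, ``repeated here for the reader's convenience,'' and closed with \qed{} without argument. What you have produced is a correct, self-contained reconstruction of the canonical proof from that reference: the easy direction and the equivalence of the set-theoretic and functional formulations are handled properly (your derivation of the set statement from the functional one is sound, and the reverse implication, which the paper's phrasing ``in particular'' suggests, also holds by taking $\GG=\sigma(g)$, whose atoms are the level sets of $g$); the reduction to the Souslin case via the Hausdorff quotient is legitimate, since $\SS\subset\FF$ makes every set in $\SS$ saturated for the $\FF$-atoms, so $\SS$, its generators, and $g$ all factor through the quotient map with $q^{-1}(\hat\SS)=\SS$; and the core argument --- encoding the separable $\sigma$-field $\SS$ by $\pi=(\1_{B_n})_{n\in\mathbb{N}}$ into the Cantor set, noting that $\{g>r\}$ is $\pi$-saturated, that $\pi(\{g>r\})$ and $\pi(\{g\leq r\})$ are disjoint analytic sets, and invoking the first separation theorem to pull back a Borel separator --- is exactly where the Blackwell hypothesis enters, as you correctly flag. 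The auxiliary facts you lean on (measurable subsets of Souslin spaces are Souslin, measurable images of Souslin spaces in standard Borel spaces are analytic, and Lusin separation in a Polish ambient space) are precisely the contents of the same chapter of \cite{dellacheriemeyer}, so your argument buys self-containedness at the cost of re-deriving what the authors chose simply to cite.
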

Some elementary observations that we shall use without special reference are gathered in 
\begin{lemma}
\leavevmode
\begin{itemize}
\item[(i)]If $Y$ is a mapping from $A$ into some Hausdorff (respectively separable) measurable space $(B,\mathcal{B})$, then $Y$ is constant on the atoms of $\sigma(Y)$ (respectively $\sigma(Y)$ is separable). 

\item[(ii)]Conversely, if $Y$ is a surjective mapping from $A$ onto some measurable space $(B,\mathcal{B})$, constant on the atoms of $\sigma(Y)$, then $(B,\mathcal{B})$ is Hausdorff.

\item[(iii)]Any measurable subspace (with the trace $\sigma$-field) of a separable (respectively Hausdorff) space is separable (respectively Hausdorff). 
\item[(iv)]If $f:A\to (B,\BB)$ is any map into a measurable space, then the atoms of $\sigma(f)$ always `respect' the equivalence relation induced by $f$, i.e., for $\{\omega,\omega'\}\subset A$, if $f(\omega)=f(\omega')$, then $\omega$ and $\omega'$ belong to the same atom of $\sigma(f)$.
\end{itemize}
\end{lemma}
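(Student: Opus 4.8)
The plan is to establish the four assertions separately, relying throughout on two elementary facts that I would record at the outset. The first is that for any map $f:A\to(B,\BB)$ the generated $\sigma$-field is exactly the family of preimages, $\sigma(f)=\{f^{-1}(D):D\in\BB\}$ (this family being already a $\sigma$-field), and more generally that $f^{-1}$ commutes with $\sigma$-generation, so that $\sigma(f)=\sigma_A(\{f^{-1}(D):D\in\mathcal D\})$ whenever $\BB=\sigma(\mathcal D)$. The second is the characterisation of atoms: two points lie in the same atom of $(\Omega,\FF)$ precisely when no member of $\FF$ separates them, whence Hausdorffness is equivalent to the assertion that any two distinct points are separated by some measurable set.

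I would begin with (iv), which is immediate: if $f(\omega)=f(\omega')$ then for every $C=f^{-1}(D)\in\sigma(f)$ one has $\omega\in C\iff f(\omega)\in D\iff f(\omega')\in D\iff\omega'\in C$, so $\mathbbm{1}_C(\omega)=\mathbbm{1}_C(\omega')$ for all $C\in\sigma(f)$, placing $\omega$ and $\omega'$ in a common atom of $\sigma(f)$. For (i), the separable case follows at once from the commutation fact: writing $\BB=\sigma(\{D_n:n\in\mathbb{N}\})$ gives $\sigma(Y)=\sigma_A(\{Y^{-1}(D_n):n\in\mathbb{N}\})$, a countable generating set. For the Hausdorff case of (i), I would suppose $\omega,\omega'$ share an atom of $\sigma(Y)$ but $Y(\omega)\neq Y(\omega')$; since $(B,\BB)$ is Hausdorff some $D\in\BB$ separates $Y(\omega)$ from $Y(\omega')$, and then $Y^{-1}(D)\in\sigma(Y)$ separates $\omega$ from $\omega'$, a contradiction.

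Part (ii) is the converse of the Hausdorff half of (i), and I would run the same separation argument in reverse: given distinct $b,b'\in B$, surjectivity provides $\omega,\omega'$ with $Y(\omega)=b$ and $Y(\omega')=b'$; as $Y$ is constant on atoms and $b\neq b'$, these points cannot share an atom of $\sigma(Y)$, so some $C=Y^{-1}(D)$ separates them, and transporting through $Y$ shows $D$ separates $b$ from $b'$ in $(B,\BB)$; since $b,b'$ were arbitrary, $(B,\BB)$ has singleton atoms. For (iii) I would invoke the standard trace identity $(\sigma_\Omega(\mathcal C))\vert_A=\sigma_A(\mathcal C\vert_A)$: a countable generating set $\{F_n\}$ of $\FF$ yields the countable generating set $\{F_n\cap A\}$ of $\FF\vert_A$ (separable case), while in the Hausdorff case, given distinct $\omega,\omega'\in A$, a separating $F\in\FF$ yields the separating trace set $F\cap A\in\FF\vert_A$ (using $\omega,\omega'\in A$).

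None of these steps presents a genuine difficulty; the only point meriting care is the commutation of $\sigma$-generation with preimages, together with its trace analogue used in (iii), since this is precisely what lets a countable generating set (or a separating set) be transported cleanly through a map or a restriction. I would therefore state those two facts once and simply cite them at each use.
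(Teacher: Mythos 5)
Your proof is correct and follows essentially the same route as the paper's: the Hausdorff part of (i) and part (ii) are the same separation-transport arguments (yours phrased contrapositively), while the remaining parts, which the paper dismisses as trivial, you handle with the standard preimage-commutation and trace identities. Nothing is missing; you have simply written out the details the paper omits.
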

\begin{proof}
\leavevmode
\begin{itemize}
\item[(i)]Suppose that $(B,\mathcal{B})$ is Hausdorff and $A$ is an atom of $\sigma(Y)$ containing $\omega_1$ and $\omega_2$. If $Y(\omega_1)\ne Y(\omega_2)$, then there is a $W\in \mathcal{B}$ with $\mathbbm{1}_W(Y(\omega_1))\ne \mathbbm{1}_W(Y(\omega_2))$, hence $\mathbbm{1}_{Y^{-1}(W)}(\omega_1)\ne \mathbbm{1}_{Y^{-1}(W)}(\omega_2)$, a contradiction. 

\item[(ii)]If $\{b,b'\}\subset B$ and $\mathbbm{1}_W(b)=\mathbbm{1}_W(b')$ for all $W\in \mathcal{B}$, then if $\{a,a'\}\subset A$ are such that $Y(a)=b$ and $Y(a')=b'$, $\mathbbm{1}_{Z}(a)=\mathbbm{1}_Z(a')$ for all $Z\in \sigma(Y)$, so that $a$ and $a'$ belong to the same atom of $(A,\sigma(Y))$ and consequently $b=b'$. 
\end{itemize}
The rest of the claims are trivial.
\end{proof}

Now, a key result in this section will establish that, for a process $X$ and a stopping time $S$ thereof, $\sigma(X^S)=\FF^X_S$, i.e. that the initial structure (with respect to $\Epsilon^{\otimes T}$) of the stopped process coincides with the filtration of the process at the stopping time --  at least under suitable conditions. 

Our second lemma tells us that elements of $\FF^X_S$ are, possibly {non-measurable}, \emph{functions}  of the stopped process $X^S$. 

\begin{lemma}[Key lemma]\label{lemma:respect_stopping_time}
Let $X$ be a process (on $\Omega$, with time domain $T$ and values in $E$), $S$ an $\FF^X$-stopping time, $A\in \FF^X_S$. Then the following holds for every $\{\omega,\omega'\}\subset \Omega$: If $X_t(\omega)=X_t(\omega')$ for all $t\in T$ with $t\leq S(\omega)\land S(\omega')$, then 
$$S(\omega)=S(\omega'),\quad X^S(\omega)=X^S(\omega')\text{ and } \mathbbm{1}_A(\omega)=\mathbbm{1}_A(\omega').$$
\end{lemma}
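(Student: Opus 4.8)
The plan is to use a single organizing principle throughout: by definition $\FF^X_t=\sigma(X\vert_{[0,t]})$ is the $\sigma$-field generated by the map $X\vert_{[0,t]}\colon\Omega\to E^{[0,t]}$, so every $B\in\FF^X_t$ is of the form $(X\vert_{[0,t]})^{-1}(C)$ with $C\in\Epsilon^{\otimes[0,t]}$, and hence $\mathbbm{1}_B$ takes equal values at any two points whose restricted paths up to time $t$ coincide. Writing $s\eqdef S(\omega)\land S(\omega')$, the hypothesis says exactly that $X\vert_{[0,s]}(\omega)=X\vert_{[0,s]}(\omega')$ (with the convention that when $s=\infty$ this means full path agreement, since $\{t\in T:t\leq\infty\}=T$). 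All three conclusions will be read off from this factoring fact applied to the right set.

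First I would establish $S(\omega)=S(\omega')$. If $s=\infty$ this is immediate, as then both values equal $\infty$. If $s<\infty$ then $s\in T$, and by symmetry I may relabel so that $s=S(\omega)\leq S(\omega')$. The stopping-time property gives $\{S\leq s\}\in\FF^X_s$, and $\omega\in\{S\leq s\}$ since $S(\omega)=s$. Because $\mathbbm{1}_{\{S\leq s\}}$ factors through $X\vert_{[0,s]}$ and the restricted paths agree, it follows that $\omega'\in\{S\leq s\}$, i.e. $S(\omega')\leq s=S(\omega)\leq S(\omega')$, whence $S(\omega)=S(\omega')$.

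Next, setting $S(\omega)=S(\omega')=:s$, the equality $X^S(\omega)=X^S(\omega')$ is automatic: for each $t\in T$ one has $X^S_t=X_{s\land t}$ with $s\land t\leq s$, so path agreement up to $s$ yields $X^S_t(\omega)=X^S_t(\omega')$. For the last assertion I would split on the finiteness of $s$. If $s<\infty$, then $A\cap\{S\leq s\}\in\FF^X_s$ by the definition of $\FF^X_S$ (taking $t=s$); since $\omega,\omega'\in\{S\leq s\}$, we have $\mathbbm{1}_A=\mathbbm{1}_{A\cap\{S\leq s\}}$ at both points, and the latter indicator factors through $X\vert_{[0,s]}$, so the restricted-path agreement forces $\mathbbm{1}_A(\omega)=\mathbbm{1}_A(\omega')$. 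If $s=\infty$, the hypothesis gives full agreement $X(\omega)=X(\omega')$, and since $A\in\FF^X_\infty=\sigma(X)$, again $\mathbbm{1}_A(\omega)=\mathbbm{1}_A(\omega')$.

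I expect no genuine obstacle here: the argument is purely measure-theoretic and requires no topological (Blackwell) hypothesis. The only points demanding care are the conceptual lemma driving everything -- that membership in $\FF^X_s$ is the same as being a function of the path truncated at $s$ -- and the correct treatment of the boundary case $s=\infty$, where one must pass from $\FF^X_s$ to $\FF^X_\infty=\sigma(X)$ and invoke agreement of the entire sample paths rather than of a bounded truncation.
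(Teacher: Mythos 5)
Your proof is correct and follows essentially the same route as the paper's: both rest on the observation that every element of $\FF^X_t=\sigma(X\vert_{[0,t]})$ is a preimage under the truncated path map, applied first to $\{S\leq t\}$ with $t=S(\omega)\land S(\omega')$ and then to $A\cap\{S\leq t\}$, with the case $t=\infty$ handled via $\FF^X_\infty=\sigma(X)$. The only cosmetic difference is your WLOG relabeling where the paper notes that at least one of $\omega,\omega'$ lies in $\{S\leq t\}$; the substance is identical.
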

\begin{remark}
This implies that if $A\in \FF^X_S$, then $\mathbbm{1}_A=F\circ X^S$ for some function $F:\Im X^S\to \{0,1\}$. Thus, if  $X^S:\Omega\to \Im X^S$ happens to be $\FF^X_S$/$2^{\Im X^S}$-measurable (this would typically be the case if the range of $X^S$ is denumerable, and the sample paths of $X$ are sufficiently regular), then it follows at once that $\sigma(X^S)=\FF^X_S$. 
\end{remark}
\begin{proof}
Define $t\eqdef S(\omega)\land S(\omega')$. If $t=\infty$, then clearly $S(\omega)=S(\omega')$. If not, then $\{S\leq t\}\in \FF^X_{t}$, so that there is a $U\in \Epsilon^{\otimes [0,t]}$ with $\{S\leq t\}=X\vert_{[0,t]}^{-1}(U)$. Then at least one of $\omega$ and $\omega'$ must belong to $\{S\leq t\}$, hence to $X\vert_{[0,t]}^{-1}(U)$. Consequently, since by assumption $X\vert_{[0,t]}(\omega)=X\vert_{[0,t]}(\omega')$, both do. It follows that $S(\omega)=S(\omega')$. In particular, $X^S(\omega)=X^S(\omega')$. 

Similarly, since $A\in \FF^X_S$, $A\cap \{S\leq t\}\in \FF^X_t$, so that there is a $U\in \Epsilon^{\otimes [0,t]}$ (respectively $U\in \Epsilon^{\otimes T}$), with $A\cap \{S\leq t\}=X\vert_{[0,t]}^{-1}(U)$ (respectively $A\cap \{S\leq t\}=X^{-1}(U)$), when $t<\infty$ (respectively $t=\infty$). Then $\mathbbm{1}_A(\omega)=\mathbbm{1}_{A\cap \{S\leq t\}}(\omega)=\mathbbm{1}_U(X\vert_{[0,t]}(\omega))=\mathbbm{1}_U(X\vert_{[0,t]}(\omega'))=\mathbbm{1}_{A\cap \{S\leq t\}}(\omega')=\mathbbm{1}_A(\omega')$ (respectively $\mathbbm{1}_A(\omega)=\mathbbm{1}_{A\cap \{S\leq t\}}(\omega)=\mathbbm{1}_U(X(\omega))=\mathbbm{1}_U(X(\omega'))=\mathbbm{1}_{A\cap \{S\leq t\}}(\omega')=\mathbbm{1}_A(\omega')$).
\end{proof}

Our second lemma deals with the discrete case. 

\begin{lemma}[Stopping times I]\label{lemma:denumerable}
Let $T=\mathbb{N}_0$ and $X$ be a process (on $\Omega$, with time domain $\mathbb{N}_0$ and values in $E$). For a  time $S:\Omega\to \mathbb{N}_0\cup \{\infty\}$ the following are equivalent: 
\begin{enumerate}
 \item $S$ is an $\FF^X$-stopping time. 
\item $S$ is an $\FF^{X^S}$-stopping time.
\end{enumerate}
\end{lemma}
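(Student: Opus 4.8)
The plan is to prove both implications by a single induction on the discrete time parameter $t\in\mathbb{N}_0$, exploiting two elementary but crucial facts. First, $X^S_0=X_{S\wedge 0}=X_0$ identically, so $\FF^{X^S}_0=\sigma(X_0)=\FF^X_0$; this disposes of the base case $\{S=0\}=\{S\leq 0\}$ in both directions. Second, on the event $\{S\geq t\}$ the stopped and unstopped processes agree on $[0,t]$: for $s\leq t$ and $\omega$ with $S(\omega)\geq t$ one has $S(\omega)\wedge s=s$, so $X^S\vert_{[0,t]}=X\vert_{[0,t]}$ there. This second fact is what lets me transfer an indicator between $\sigma(X\vert_{[0,t]})=\FF^X_t$ and $\sigma(X^S\vert_{[0,t]})=\FF^{X^S}_t$, provided I already control the set $\{S\geq t\}$ within the relevant $\sigma$-field.

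For (1)$\Rightarrow$(2) I would show by induction that $\{S=n\}\in\FF^{X^S}_n$ for every $n$. Granting this for all $m<n$ gives $\{S\leq n-1\}\in\FF^{X^S}_{n-1}$, hence $\{S\geq n\}=\Omega\setminus\{S\leq n-1\}\in\FF^{X^S}_{n-1}\subset\FF^{X^S}_n$. Since $S$ is an $\FF^X$-stopping time, $\{S=n\}\in\FF^X_n=\sigma(X\vert_{[0,n]})$, so $\{S=n\}=(X\vert_{[0,n]})^{-1}(U)$ for some $U\in\Epsilon^{\otimes[0,n]}$. Intersecting with $\{S\geq n\}$ and using $X\vert_{[0,n]}=X^S\vert_{[0,n]}$ on that event yields $\{S=n\}=(X^S\vert_{[0,n]})^{-1}(U)\cap\{S\geq n\}\in\FF^{X^S}_n$. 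Then $\{S\leq t\}=\bigcup_{n\leq t}\{S=n\}\in\FF^{X^S}_t$, as required.

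The implication (2)$\Rightarrow$(1) runs symmetrically: assuming $\{S\leq t-1\}\in\FF^X_{t-1}$ gives $\{S\geq t\}\in\FF^X_{t-1}\subset\FF^X_t$, while the hypothesis $\{S\leq t\}\in\FF^{X^S}_t=\sigma(X^S\vert_{[0,t]})$ furnishes $V\in\Epsilon^{\otimes[0,t]}$ with $\{S\leq t\}=(X^S\vert_{[0,t]})^{-1}(V)$; intersecting with $\{S\geq t\}$ and again invoking the agreement of $X$ and $X^S$ there shows $\{S=t\}=(X\vert_{[0,t]})^{-1}(V)\cap\{S\geq t\}\in\FF^X_t$, whence $\{S\leq t\}=\{S\leq t-1\}\cup\{S=t\}\in\FF^X_t$.

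The main obstacle to be wary of is the temptation to compare the two filtrations directly: one might hope $\FF^{X^S}_t\subset\FF^X_t$ (or its reverse) holds outright, but the natural proof of either containment already presupposes that $S$ is a stopping time of the relevant filtration, which is circular. The induction sidesteps this by using the inductive hypothesis \emph{only} to locate the single set $\{S\geq n\}$ in the appropriate $\sigma$-field, on which the two processes coincide; everything else is just the definition of a generated $\sigma$-field, so \emph{no} regularity of $(E,\Epsilon)$ and no factorisation lemma are needed. Note that the argument is genuinely special to discrete time, where $\{S\leq t\}$ is a \emph{finite} union of the level sets $\{S=n\}$ — which is exactly why the continuous-time counterpart later requires the Blackwell hypothesis.
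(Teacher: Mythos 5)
Your proof is correct, and it takes a genuinely (if mildly) different route from the paper's. The paper's argument is non-inductive: for each $m\leq n$ it chooses $E_m\in\Epsilon^{\otimes [0,m]}$ representing $\{S\leq m\}$ as a preimage under the one process, and then establishes the sandwich $\{S=m\}\subset X\vert_{[0,m]}^{-1}(E_m)\subset\{S\leq m\}$ (the left inclusion because the two processes agree on $[0,m]$ wherever $S=m$; the right because an $\omega$ with $X\vert_{[0,m]}(\omega)\in E_m$ and $S(\omega)>m$ would also satisfy $X^S\vert_{[0,m]}(\omega)\in E_m$, i.e.\ $S(\omega)\leq m$, a contradiction). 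Taking unions over $m\in[0,n]$, the sandwich collapses to give $\{S\leq n\}=\cup_{m\leq n}X\vert_{[0,m]}^{-1}(E_m)$ exactly, a set visibly in the target filtration --- and symmetrically for the converse direction. The point of the sandwich is that the paper never needs to know that $\{S\geq m\}$ lies in the target $\sigma$-field; your induction exists precisely to supply that fact, since you recover $\{S=n\}$ exactly as a preimage intersected with $\{S\geq n\}$. Both proofs rest on the same elementary observation --- $X$ and $X^S$ agree on $[0,m]$ over the event $\{S\geq m\}$ --- and neither requires any regularity of $(E,\Epsilon)$, so the difference is one of bookkeeping: the paper's version is shorter because the sandwich makes the exact identity of the intermediate sets irrelevant, while yours identifies every set precisely at the cost of running an induction. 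Your closing remarks (the circularity of trying to prove a direct inclusion of the two filtrations, and the fact that the finite-union mechanism is what breaks down in continuous time) are both apt.
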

\begin{proof}
Suppose $S$ is an $\FF^{X^S}$-stopping time. Let $n\in \mathbb{N}_0$. Then for each $m\in [0,n]$, $\{S\leq m\}\in \FF^{X^S}_m$, so there is an $E_m\in \Epsilon^{\otimes [0,m]}$ with $\{S\leq m\}=(X^{S}\vert_{[0,m]})^{-1}(E_m)$. Then $\{S=m\}\subset X\vert_{[0,m]}^{-1}(E_m)\subset \{S\leq m\}$. Consequently $\{S\leq n\}=\cup_{m\in [0,n]}X\vert_{[0,m]}^{-1}(E_m)\in \FF^{X}_n$. 

Conversely, suppose $S$ is an $\FF^X$-stopping time. Let $n\in \mathbb{N}_0$. For each $m\in [0,n]$, $\{S\leq m\}\in \FF^X_m$, hence there is an $E_m\in \Epsilon^{\otimes [0,m]}$ with $\{S\leq m\}=X\vert_{[0,m]}^{-1}(E_m)$. Then $\{S=m\}\subset (X^S\vert_{[0,m]})^{-1}(E_m)\subset \{S\leq m\}$. Consequently $\{S\leq n\}=\cup_{m\in [0,n]}(X^S\vert_{[0,m]})^{-1}(E_m)\in \FF^{X^S}_n$. 
\end{proof}
The next step establishes that members of $\FF^X_S$ are, in fact, \emph{measurable} functions of the stopped process $X^S$ -- at least under certain conditions (and always in the discrete case). 

\begin{proposition}\label{proposition:stopped}
Let $X$ be a process (on $\Omega$, with time domain $T$ and values in $E$), $S$ an $\FF^X$-stopping time. If any one of the three conditions below is fulfilled, then $\FF^X_S\subset \sigma(X^S)$ (where $X^S$ is viewed as assuming values in $(E^T,\Epsilon^{\otimes T})$). 
\begin{enumerate}[(M1)]
\item\label{cond:stopped:zero} $T=\mathbb{N}_0$. 
\item\label{cond:stopped:one} $\Im X^S\subset \Im X$.
\item\label{cond:stopped:two}
\begin{enumerate}
\item\label{cond:stopped:two:a} $(\Omega,\GG)$ is Blackwell for some $\sigma$-field $\GG\supset \FF^X_S\lor \sigma(X^S)$.
\item\label{cond:stopped:two:b} $\sigma(X^S)$ is separable (in particular, this obtains if $(\Im X^S,\Epsilon^{\otimes T}\vert_{\Im X^S})$ is separable).
\item\label{cond:stopped:two:c} $X^S$ is constant on the atoms of $\sigma(X^S)$, i.e. $(\Im X^S,\Epsilon^{\otimes T}\vert_{\Im X^S})$ is Hausdorff.
\end{enumerate}
\end{enumerate}
\end{proposition}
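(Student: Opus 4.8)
The engine for all three cases is the Key Lemma~\ref{lemma:respect_stopping_time}. For $A\in\FF^X_S$ and $\omega,\omega'\in\Omega$, the equality $X^S(\omega)=X^S(\omega')$ forces $X_t(\omega)=X_t(\omega')$ for every $t\le S(\omega)\land S(\omega')$ (for such $t$, $X^S_t=X_t$ at both points), which is exactly the hypothesis of that lemma; hence $\mathbbm{1}_A(\omega)=\mathbbm{1}_A(\omega')$. In other words, every $A\in\FF^X_S$ is a union of fibres of $X^S$, so $\mathbbm{1}_A=F\circ X^S$ for some (a priori non-measurable) $F:\Im X^S\to\{0,1\}$. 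The whole task is then to promote this to measurability of $F$ for the trace $\sigma$-field $\Epsilon^{\otimes T}\vert_{\Im X^S}$, and the three hypotheses (M1)--(M3) supply three different ways of doing so; I expect (M2) to be the delicate one.

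Under (M1) I would avoid $F$ altogether. By Lemma~\ref{lemma:denumerable}, $S$ is also an $\FF^{X^S}$-stopping time, so $\{S\le m\}\in\FF^{X^S}_m\subset\sigma(X^S)$ for each $m\in\mathbb{N}_0$, whence $\{S=m\}$ (all $m$) and $\{S=\infty\}$ lie in $\sigma(X^S)$. Now decompose $A=\bigcup_{m\in\mathbb{N}_0}(A\cap\{S=m\})\cup(A\cap\{S=\infty\})$. Using $A\in\FF^X_S$, write $A\cap\{S\le m\}=X\vert_{[0,m]}^{-1}(V_m)$ with $V_m\in\Epsilon^{\otimes[0,m]}$; since $X\vert_{[0,m]}=X^S\vert_{[0,m]}$ on $\{S=m\}$, this gives $A\cap\{S=m\}=\{S=m\}\cap(X^S\vert_{[0,m]})^{-1}(V_m)\in\sigma(X^S)$. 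Finally, choosing $H\in\Epsilon^{\otimes T}$ with $A=X^{-1}(H)$ (possible as $A\in\FF^X_\infty=\sigma(X)$) and using $X^S=X$ on $\{S=\infty\}$, one gets $A\cap\{S=\infty\}=\{S=\infty\}\cap(X^S)^{-1}(H)\in\sigma(X^S)$. Summing, $A\in\sigma(X^S)$.

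Under (M2) I would reuse a single representing set: pick $H\in\Epsilon^{\otimes T}$ with $A=X^{-1}(H)$ and claim $A=(X^S)^{-1}(H\cap\Im X^S)\in\sigma(X^S)$. As $X^S$ takes values in $\Im X^S$, this reduces to the pointwise equivalence $X(\omega)\in H\iff X^S(\omega)\in H$. Here $\Im X^S\subset\Im X$ is consumed: for fixed $\omega$, since $X^S(\omega)\in\Im X^S\subset\Im X$ it lets me choose $\omega'$ with $X(\omega')=X^S(\omega)$. Then $X(\omega')$ and $X(\omega)$ agree on $[0,S(\omega)]\supseteq[0,S(\omega)\land S(\omega')]$, so the Key Lemma gives $S(\omega')=S(\omega)$ and $X^S(\omega')=X^S(\omega)$ --- the fixed-point property that stopping an already-stopped path does nothing. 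Now $\mathbbm{1}_H(X(\omega'))=\mathbbm{1}_A(\omega')=\mathbbm{1}_A(\omega)=\mathbbm{1}_H(X(\omega))$ (the middle equality because $X^S(\omega')=X^S(\omega)$ and $A$ is a union of fibres of $X^S$), while $\mathbbm{1}_H(X(\omega'))=\mathbbm{1}_H(X^S(\omega))$ (because $X(\omega')=X^S(\omega)$); these two identities yield the desired equivalence. This is the step I expect to be the main obstacle, precisely because establishing the fixed-point/idempotency property is where one must exploit that stopped paths are genuine paths.

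Under (M3) I would call on Blackwell's Theorem~\ref{Black} inside the Blackwell space $(\Omega,\GG)$, taking the separable sub-$\sigma$-field $\sigma(X^S)$ (hypothesis~(b)) and the sub-$\sigma$-field $\FF^X_S$; it then suffices to verify that each atom of $\FF^X_S$ is a union of atoms of $\sigma(X^S)$, i.e. that the $\sigma(X^S)$-atoms refine the $\FF^X_S$-atoms. If $\omega,\omega'$ share a $\sigma(X^S)$-atom then, by hypothesis~(c), $X^S$ is constant there, so $X^S(\omega)=X^S(\omega')$, and the opening reduction gives $\mathbbm{1}_A(\omega)=\mathbbm{1}_A(\omega')$ for every $A\in\FF^X_S$, i.e. $\omega,\omega'$ share an $\FF^X_S$-atom; Blackwell's Theorem then yields $\FF^X_S\subset\sigma(X^S)$. (The parenthetical strengthenings in (b) and (c) are just the elementary transfer of separability, respectively Hausdorffness, of $(\Im X^S,\Epsilon^{\otimes T}\vert_{\Im X^S})$ to $\sigma(X^S)$.) In contrast with (M2), both (M1) and (M3) are essentially bookkeeping once the Key Lemma is in hand.
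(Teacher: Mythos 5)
Your proposal is correct and follows essentially the same route as the paper's proof: the same Key Lemma reduction showing every $A\in\FF^X_S$ is a union of fibres of $X^S$, the decomposition over $\{S=m\}$ via Lemma~\ref{lemma:denumerable} for \ref{cond:stopped:zero}, the witness-point argument (choosing $\omega'$ with $X(\omega')=X^S(\omega)$) for \ref{cond:stopped:one}, and Blackwell's Theorem applied to the atoms of $\FF^X_S$ and $\sigma(X^S)$ for \ref{cond:stopped:two}. The only cosmetic differences are that you spell out the $\{S=\infty\}$ case in \ref{cond:stopped:zero} (which the paper leaves as ``a similar argument'') and phrase \ref{cond:stopped:one} through a representing set $H$ rather than a measurable map $F$, which amounts to the same thing.
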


\begin{remark}\label{remark:important}
\leavevmode
\begin{enumerate}[(1)]
\item Condition \ref{cond:stopped:one} is clearly quite a strong assumption, but will typically be met when $X$ is the coordinate process on a canonical space. It is slightly weaker than condition (1.11) of \cite[Paragraph 1.1.3]{shiryaev} (see \cite[Paragraph~1.1.3, p.~10, Theorem~6]{shiryaev}).  
\item\label{remark:important:2} Condition \eqref{cond:stopped:two:b} is satisfied if there is a $\DD\subset E^T$, with $\Im X^S\subset \DD$, such that the trace $\sigma$-field $\Epsilon^{\otimes T}\vert_\DD$ is separable. For example (when $T=[0,\infty)$) this is the case if $E$ is a second countable (e.g. separable and  metrizable) topological space endowed with its (consequently separable) Borel $\sigma$-field, and the sample paths of $X^S$ are either all left- or all right-continuous (take $\DD$ to be all the left- or all the right-continuous paths from $E^{[0,\infty)}$). 
\item\label{remark:important:3} Finally, condition \eqref{cond:stopped:two:c} holds if $(E,\Epsilon)$ is Hausdorff and so, in particular, when the singletons of $E$ belong to $\Epsilon$. 
\end{enumerate}
\end{remark}
\begin{proof}
Assume first \ref{cond:stopped:zero}. Let $A\in \FF^X_S$ and $n\in \mathbb{N}_0$. Then $A\cap \{S=n\}\in \FF^X_n$, so $A\cap \{S=n\}=(X\vert_{[0,n]})^{-1}(Z)$ for some $Z\in \Epsilon^{\otimes [0,n]}$. But then  $A\cap \{S=n\}=(X^S\vert_{[0,n]})^{-1}(Z)\cap \{S=n\}$. Thanks to Lemma~\ref{lemma:denumerable}, $\{S=n\}\in \sigma(X^S)$. A similar argument deals with the case where $n=\infty$.

Assume now \ref{cond:stopped:one}. Let $A\in \FF^X_S$. Then $\mathbbm{1}_A=F\circ X$ for some $\Epsilon^{\otimes T}$/$2^{\{0,1\}}$-measurable mapping $F$. Since $\Im X^S\subset \Im X$, for any $\omega\in \Omega$, there is an $\omega'\in \Omega$ with $X(\omega')=X^S(\omega)$, and then thanks to Lemma~\ref{lemma:respect_stopping_time} $X^S(\omega')=X^S(\omega)$, moreover, $F\circ X^S(\omega)=F\circ X(\omega')=\mathbbm{1}_A(\omega')=\mathbbm{1}_A(\omega)$. It follows that $\mathbbm{1}_A=F\circ X^S$. 

Finally assume \ref{cond:stopped:two}. We apply Blackwell's Theorem. Specifically, thanks to \eqref{cond:stopped:two:a}  $(\Omega,\GG)$ is a Blackwell space and $\FF^X_S$ is a sub-$\sigma$-field of $\GG$; \eqref{cond:stopped:two:a} and \eqref{cond:stopped:two:b} imply that $\sigma(X^S)$ is a separable sub-$\sigma$-field of $\GG$. Finally, thanks to \eqref{cond:stopped:two:c} and Lemma~\ref{lemma:respect_stopping_time}, every atom (equivalently, every element) of $\FF^X_S$ is a union of atoms of $\sigma(X^S)$. It follows that $\FF^X_S\subset \sigma(X^S)$.
\end{proof}

The continuous-time analogue of Lemma~\ref{lemma:denumerable} is as follows: 
\begin{proposition}[Stopping times II]\label{lemma:continuous}
Let $T=[0,\infty)$ and $X$ be a process (on $\Omega$, with time domain $[0,\infty)$ and values in $E$), $S:\Omega\to [0,\infty]$ a time. Suppose:
\begin{enumerate}[(S1)]
\item $\sigma(X\vert_{[0,t]})$ and $\sigma(X^{S\land t})$ are separable, $(\Im X\vert_{[0,t]},\Epsilon^{\otimes [0,t]})$ and $(\Im X^{S\land t},\Epsilon^{\otimes T}\vert_{\Im X^{S\land t}})$ Hausdorff for each $t\in [0,\infty)$.
\item $X^S$ and $X$ are both measurable with respect to a Blackwell $\sigma$-field $\GG$ on $\Omega$. 
\end{enumerate}
Then the following are equivalent: 
\begin{enumerate}[(a)]
\item $S$ is an $\FF^X$-stopping time.
\item $S$ is an $\FF^{X^S}$-stopping time.
\end{enumerate}
\end{proposition}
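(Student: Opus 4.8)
The plan is to treat the two implications symmetrically, reducing each to an application of Blackwell's Theorem (Theorem~\ref{Black}) on the Blackwell space $(\Omega,\GG)$ supplied by hypothesis (S2), and then to dispose of the one genuinely delicate point by invoking the Key Lemma (Lemma~\ref{lemma:respect_stopping_time}). First I would record two identities that convert a ``filtration at a time'' into the ``$\sigma$-field of a stopped process'': by definition $\FF^X_t=\sigma(X\vert_{[0,t]})$, while from $X^S_u=X_{S\land u}$ one checks, for $u\le t$, that $X^S_u=X_{S\land t\land u}=X^{S\land t}_u$ and, for $u>t$, that $X^{S\land t}_u=X_{S\land t}=X^{S\land t}_t$; hence $\FF^{X^S}_t=\sigma(X^S\vert_{[0,t]})=\sigma(X^{S\land t})$. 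Since $X^{S\land t}_u=X^S_{t\land u}$, the map $X^{S\land t}$ is $\sigma(X^S)$-measurable, so both $X\vert_{[0,t]}$ and $X^{S\land t}$ are $\GG$-measurable and, by (S1), generate separable sub-$\sigma$-fields of $\GG$ whose ranges are Hausdorff.

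With these in hand, each implication becomes a measurability statement: the indicator $\mathbbm{1}_{\{S\le t\}}$ is $\GG$-measurable (because $\{S\le t\}$ lies in the relevant natural filtration at $t$, hence in $\GG$), and must be shown measurable with respect to the separable $\sigma$-field generated by the other process at time $t$. By the function form of Blackwell's Theorem this holds as soon as $\mathbbm{1}_{\{S\le t\}}$ is constant on the atoms of that $\sigma$-field, and by the Hausdorff property in (S1) those atoms are exactly the fibres of the generating map. Thus for (a)$\Rightarrow$(b), fixing $t$, I would show $X^{S\land t}(\omega)=X^{S\land t}(\omega')\Rightarrow\mathbbm{1}_{\{S\le t\}}(\omega)=\mathbbm{1}_{\{S\le t\}}(\omega')$, giving $\{S\le t\}\in\sigma(X^{S\land t})=\FF^{X^S}_t$; and for (b)$\Rightarrow$(a) I would show $X\vert_{[0,t]}(\omega)=X\vert_{[0,t]}(\omega')\Rightarrow\mathbbm{1}_{\{S\le t\}}(\omega)=\mathbbm{1}_{\{S\le t\}}(\omega')$, giving $\{S\le t\}\in\sigma(X\vert_{[0,t]})=\FF^X_t$.

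The hard part will be the asymmetry of stopped paths: equality of the two (stopped, respectively restricted) paths does not force equality of the underlying paths up to $t$, so one cannot simply substitute into a representing function. The only configuration that can break the desired implication is $S(\omega)\le t<S(\omega')$ (or its mirror), and I would rule it out via Lemma~\ref{lemma:respect_stopping_time}. In the case (a)$\Rightarrow$(b), from $X^{S\land t}(\omega)=X^{S\land t}(\omega')$ together with $S(\omega)\le t<S(\omega')$ one reads off $X_u(\omega)=X_u(\omega')$ for all $u\le S(\omega)=S(\omega)\land S(\omega')$; since $S$ is an $\FF^X$-stopping time, the Key Lemma applied to $X$ then forces $S(\omega)=S(\omega')$, a contradiction. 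In the case (b)$\Rightarrow$(a), the same argument is run on the process $Y\eqdef X^S$, which is legitimate precisely because (b) asserts that $S$ is an $\FF^Y$-stopping time: from $X\vert_{[0,t]}(\omega)=X\vert_{[0,t]}(\omega')$ and $S(\omega)\le t<S(\omega')$ one gets $Y_u(\omega)=X_u(\omega)=X_u(\omega')=Y_u(\omega')$ for $u\le S(\omega)=S(\omega)\land S(\omega')$, whence the Key Lemma applied to $Y$ again yields $S(\omega)=S(\omega')$, a contradiction. Hence in both directions the indicator is constant on fibres, Blackwell's Theorem applies, and the stated equivalence follows.
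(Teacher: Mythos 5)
Your proof is correct and takes essentially the same route as the paper's: both directions reduce to Blackwell's Theorem on the Blackwell space $(\Omega,\GG)$ combined with Lemma~\ref{lemma:respect_stopping_time}, which is used exactly as you use it — to show that agreement of the relevant (stopped or restricted) paths forces $S(\omega)=S(\omega')$, ruling out the configuration $S(\omega)\le t<S(\omega')$. The only organizational difference is that you check constancy of $\mathbbm{1}_{\{S\le t\}}$ on atoms directly and symmetrically in both directions, whereas the paper proves the stronger $\sigma$-field inclusion $\FF^{X^S}_t\subset\FF^X_t$ for (b)$\Rightarrow$(a) and cites Proposition~\ref{proposition:stopped} (applied to the stopping time $S\land t$) for (a)$\Rightarrow$(b).
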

\begin{proof}
Suppose first (b). Let $t\in [0,\infty)$. Then $\{S\leq t\}\in \FF^{X^S}_t$. But $\FF^{X^S}_t=\sigma(X^S\vert_{[0,t]})\subset \sigma(X\vert_{[0,t]})=\FF^X_t$. This follows from the fact that every atom of $\sigma(X^S\vert_{[0,t]})$ is a union of atoms of $ \sigma(X\vert_{[0,t]})$ (whence one can apply Blackwell's Theorem). To see this, note that if $\omega$ and $\omega'$ belong to the same atom of $\sigma(X\vert_{[0,t]})$, then $X\vert_{[0,t]}(\omega)=X\vert_{[0,t]}(\omega')$ (since  $(\Im X\vert_{[0,t]},\Epsilon^{\otimes [0,t]})$ is  Hausdorff). But then $X^S_s(\omega)=X^S_s(\omega')$ for all $s\in [0,(S(\omega)\land t)\land (S(\omega')\land t)]$, and so by Lemma~\ref{lemma:respect_stopping_time} (applied to the process $X^S$ and the stopping time $S\land t$ of $\FF^{X^S}$), $(X^S)^{S\land t}(\omega)=(X^S)^ {S\land t}(\omega')$, i.e. $X^S\vert_{[0,t]}(\omega)=X^S \vert_{[0,t]}(\omega')$. We conclude that $\omega$ and $\omega'$ belong to the same atom of $\sigma(X^S\vert_{[0,t]})$.

Conversely, assume (a).  Let $t\in [0,\infty)$. Then $\{S\leq t\}\in \FF^{X}_{S\land t}$, $S\land t$ is an $\FF^X$-stopping time and thanks to Proposition~\ref{proposition:stopped}, $\FF^{X}_{S\land t}\subset \sigma(X^{S\land t})=\sigma(X^S\vert_{[0,t]})$.
\end{proof}
We now give the main result of this subsection. As mentioned in the Introduction, it generalizes canonical space results already available in the literature.

\begin{theorem}[A generalized Galmarino's test]\label{theorem:galmarino}
Let $X$ be a process (on $\Omega$, with time domain $T$ and values in $E$), $S$ an $\FF^X$-stopping time. 
\begin{enumerate}[(1)]
\item If $T=\mathbb{N}_0$, then $\sigma(X^S)=\FF^X_S$. 
\item Moreover, if $X^S$ is $\FF^X_S$/$\Epsilon^{\otimes T}$-measurable (in particular, if it is adapted to the stopped filtration $(\FF^X_{t\land S})_{t\in T}$) and either one of the conditions:

\begin{enumerate}[(G1)]
\item\label{cond:stoppedd:one} $\Im X^S\subset \Im X$.
\item\label{cond:stoppedd:two}
\begin{enumerate}[(a)]
\item\label{cond:stoppedd:two:i} $(\Omega,\GG)$ is Blackwell for some $\sigma$-field $\GG\supset \FF^X_\infty$.
\item\label{cond:stoppedd:two:ii} $\sigma(X^S)$ is separable.
\item\label{cond:stoppedd:two:iii} $(\Im X^S,\Epsilon^{\otimes T}\vert_{\Im X^S})$ is Hausdorff.
\end{enumerate}
\end{enumerate}

is met, then the following statements are equivalent:
\begin{enumerate}[(i)]
\item\label{galmarino:one} $A\in \FF^X_S$.
\item\label{galmarino:two} $\mathbbm{1}_A$ is constant on every set on which $X^S$ is constant and $A\in \FF^X_\infty$.
\item\label{galmarino:three} $A\in \sigma(X^S)$. 
\end{enumerate}
\end{enumerate}
\end{theorem}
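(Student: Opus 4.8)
The plan is to prove (1) by two inclusions and (2) by the cyclic chain \ref{galmarino:one}$\Rightarrow$\ref{galmarino:two}$\Rightarrow$\ref{galmarino:three}$\Rightarrow$\ref{galmarino:one}, where the first implication rests on Lemma~\ref{lemma:respect_stopping_time}, the last is immediate from the standing measurability hypothesis, and the middle one carries the real content. For part (1), in discrete time the stopped process $X^S$ is adapted to the stopped filtration $(\FF^X_{n\land S})_{n\in\mathbb{N}_0}$ by the elementary computation recorded on p.~\pageref{measurability_of_stopped}; since $n\land S\leq S$ forces $\FF^X_{n\land S}\subset \FF^X_S$, each $X^S_n$ is $\FF^X_S$-measurable, so $\sigma(X^S)\subset \FF^X_S$. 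The reverse inclusion $\FF^X_S\subset\sigma(X^S)$ is exactly Proposition~\ref{proposition:stopped} under its condition \ref{cond:stopped:zero}, and together these give $\sigma(X^S)=\FF^X_S$.

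Turning to part (2), I would first note that the hypothesis that $X^S$ is $\FF^X_S/\Epsilon^{\otimes T}$-measurable is precisely the statement $\sigma(X^S)\subset\FF^X_S$ (and that adaptedness of $X^S$ to $(\FF^X_{t\land S})_{t\in T}$ entails it, as $\FF^X_{t\land S}\subset\FF^X_S$). This yields \ref{galmarino:three}$\Rightarrow$\ref{galmarino:one} at once, since then $A\in\sigma(X^S)\subset\FF^X_S$. For \ref{galmarino:one}$\Rightarrow$\ref{galmarino:two}, I would observe that whenever $X^S(\omega)=X^S(\omega')$ one has $X_t(\omega)=X^S_t(\omega)=X^S_t(\omega')=X_t(\omega')$ for every $t\leq S(\omega)\land S(\omega')$, so Lemma~\ref{lemma:respect_stopping_time} applies and delivers $\mathbbm{1}_A(\omega)=\mathbbm{1}_A(\omega')$; since also $\FF^X_S\subset\FF^X_\infty$, both requirements of \ref{galmarino:two} are met. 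Note that this step needs no hypothesis beyond $A\in\FF^X_S$.

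The crux is \ref{galmarino:two}$\Rightarrow$\ref{galmarino:three}, where the two cases \ref{cond:stoppedd:one} and \ref{cond:stoppedd:two} are handled separately. Fix $A\in\FF^X_\infty$ with $\mathbbm{1}_A$ constant on the level sets of $X^S$. Under \ref{cond:stoppedd:one}, write $\mathbbm{1}_A=G\circ X$ for a measurable $G$ (possible as $A\in\FF^X_\infty=\sigma(X)$) and show $\mathbbm{1}_A=G\circ X^S$, forcing $A\in\sigma(X^S)$: for any $\omega$ the point $X^S(\omega)$ lies in $\Im X^S\subset\Im X$, so $X(\omega')=X^S(\omega)$ for some $\omega'$; then $X_t(\omega')=X^S_t(\omega)=X_t(\omega)$ for $t\leq S(\omega)$, and the $A$-free part of Lemma~\ref{lemma:respect_stopping_time} (which uses only that $S$ is a stopping time) gives $X^S(\omega')=X^S(\omega)$, whence $\mathbbm{1}_A(\omega')=\mathbbm{1}_A(\omega)$ and therefore $G(X^S(\omega))=G(X(\omega'))=\mathbbm{1}_A(\omega)$. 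Under \ref{cond:stoppedd:two} I would instead invoke Blackwell's Theorem (Theorem~\ref{Black}) for the Blackwell $\sigma$-field $\GG\supset\FF^X_\infty$: by \ref{cond:stoppedd:two:ii} together with the measurability hypothesis, $\sigma(X^S)$ is a separable sub-$\sigma$-field of $\GG$, while $\mathbbm{1}_A$ is $\GG$-measurable; by \ref{cond:stoppedd:two:iii} the map $X^S$ is constant on the atoms of $\sigma(X^S)$, so these atoms lie inside level sets of $X^S$, and hence $\mathbbm{1}_A$ is constant on every atom of $\sigma(X^S)$. Blackwell's Theorem then gives $\sigma(X^S)$-measurability of $\mathbbm{1}_A$, i.e. $A\in\sigma(X^S)$.

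I expect the main obstacle to be precisely this \ref{galmarino:two}$\Rightarrow$\ref{galmarino:three} step: in the image-inclusion case the delicate point is recognising that only the stopping-time conclusion of Lemma~\ref{lemma:respect_stopping_time} (not its $A$-dependent part) is available, while in the Blackwell case one must verify that the Hausdorff hypothesis \ref{cond:stoppedd:two:iii} is exactly what aligns the atoms of $\sigma(X^S)$ with the level sets of $X^S$ so that the level-set constancy in \ref{galmarino:two} can be fed into Theorem~\ref{Black}. The remaining implications are bookkeeping around the measurability hypothesis and Lemma~\ref{lemma:respect_stopping_time}.
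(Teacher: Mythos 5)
Your proof is correct and follows essentially the same route as the paper's: part (1) via Proposition~\ref{proposition:stopped} together with the automatic $\FF^X_S$-measurability of $X^S$ in discrete time, and part (2) via the cycle \ref{galmarino:one}$\Rightarrow$\ref{galmarino:two}$\Rightarrow$\ref{galmarino:three}$\Rightarrow$\ref{galmarino:one}, using Lemma~\ref{lemma:respect_stopping_time} for the first implication, the factorisation $\mathbbm{1}_A=F\circ X$ under \ref{cond:stoppedd:one} and Blackwell's Theorem under \ref{cond:stoppedd:two} for the second, and the standing measurability hypothesis for the third. Your observation that only the $A$-free conclusions of Lemma~\ref{lemma:respect_stopping_time} are available in the \ref{cond:stoppedd:one} case matches the paper's implicit use of that lemma exactly.
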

\begin{proof}
The first claim, which assumes $T=\mathbb{N}_0$, follows from Proposition~\ref{proposition:stopped} and the fact that automatically $X^S$ is $\FF^X_S$/$\Epsilon^{\otimes T}$-measurable in this case.

In general, implication \ref{galmarino:one}$\Rightarrow$\ref{galmarino:two} follows from Lemma~\ref{lemma:respect_stopping_time}. Implication \ref{galmarino:two}$\Rightarrow$\ref{galmarino:three} proceeds as follows. 

Suppose first  \ref{cond:stoppedd:one}. Let $\mathbbm{1}_A$ be constant on every set on which $X^S$ is constant, $A \in \FF^X_\infty$. Then  $\mathbbm{1}_A=F\circ X$ for some $\Epsilon^{\otimes T}$/$2^{\{0,1\}}$-measurable mapping $F$. Next, from $\Im X^S\subset \Im X$, for any $\omega\in \Omega$, there is an $\omega'\in \Omega$ with $X(\omega')=X^S(\omega)$, and then thanks to Lemma~\ref{lemma:respect_stopping_time}, $X^S(\omega')=X^S(\omega)$, so that by assumption, $\mathbbm{1}_A(\omega)=\mathbbm{1}_A(\omega')$, also. Moreover, $F\circ X^S(\omega)=F\circ X(\omega')=\mathbbm{1}_A(\omega')=\mathbbm{1}_A(\omega)$. It follows that $\mathbbm{1}_A=F\circ X^S$. 

Assume now \ref{cond:stoppedd:two}. Again apply Blackwell's Theorem. Specifically, on account of \ref{cond:stoppedd:two}\ref{cond:stoppedd:two:i}  $(\Omega,\GG)$ is a Blackwell space and $\FF^X_S$ is a sub-$\sigma$-field of $\GG$; on account of \ref{cond:stoppedd:two}\ref{cond:stoppedd:two:ii} and $\sigma(X^S)\subset \FF^X_S$, $\sigma(X^S)$ is a separable sub-$\sigma$-field of $\GG$. Finally, if $\mathbbm{1}_A$ is constant on every set on which $X^S$ is constant and $A\in \FF^X_\infty$, then $\mathbbm{1}_A$ is a $\GG$-measurable function (by \ref{cond:stoppedd:two}\ref{cond:stoppedd:two:i}), constant on every atom of $\sigma(X^S)$ (by \ref{cond:stoppedd:two}\ref{cond:stoppedd:two:iii}). It follows that $\mathbbm{1}_A$ is $\sigma(X^S)$-measurable. 

The implication  \ref{galmarino:three}$\Rightarrow$\ref{galmarino:one} is just one of the assumptions.
\end{proof}
As for our original motivation into this investigation, we obtain: 

\begin{corollary}[Observational consistency]\label{theorem:observational_consistency}
Let $X$ and $Y$ be two processes  (on $\Omega$, with time domain $T$ and values in $E$), $S$ an $\FF^X$ and an $\FF^Y$-stopping time. Suppose furthermore $X^S=Y^S$.  If any one of the conditions
\begin{enumerate}[(O1)]
\item\label{observable:consistency:zero} $T=\mathbb{N}_0$.
\item\label{observable:consistency:one} $\Im X=\Im Y$.
\item[or]
\item \label{observable:consistency:two}
\begin{enumerate}[(a)]
\item $(\Omega,\GG)$ (respectively $(\Omega,\HH)$) is Blackwell for some $\sigma$-field $\GG\supset \FF^X_\infty$ (respectively $\HH\supset \FF^Y_\infty$).
\item $\sigma(X^S)$ (respectively $\sigma(Y^S)$) is separable and contained in $\FF^X_S$ (respectively $\FF^Y_S$).
\item $(\Im X^S,\Epsilon^{\otimes T}\vert_{\Im X^S})$ and $(\Im Y^S,\Epsilon^{\otimes T}\vert_{\Im Y^S})$ are  Hausdorff.
\end{enumerate}
\end{enumerate}
is met, then $\FF^X_S=\FF^Y_S$.
\end{corollary}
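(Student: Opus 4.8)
The plan is to exploit that $X^S=Y^S$ holds \emph{as maps} $\Omega\to(E^T,\Epsilon^{\otimes T})$, so that trivially $\sigma(X^S)=\sigma(Y^S)$; the whole problem then reduces to identifying $\FF^X_S$ with $\sigma(X^S)$ and $\FF^Y_S$ with $\sigma(Y^S)$. Under \ref{observable:consistency:zero} this is immediate from part (1) of Theorem~\ref{theorem:galmarino}, applied once to $X$ and once to $Y$. Under \ref{observable:consistency:two} it follows from part (2) of the same theorem invoked with condition \ref{cond:stoppedd:two}: the three clauses of \ref{observable:consistency:two} translate verbatim into the clauses (a)--(c) of \ref{cond:stoppedd:two} for each of $X$ and $Y$, the requirement that $\sigma(X^S)\subset\FF^X_S$ (resp. $\sigma(Y^S)\subset\FF^Y_S$) simultaneously supplying the $\FF^X_S$/$\Epsilon^{\otimes T}$-measurability of the stopped process that part (2) presupposes. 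In both cases one concludes $\FF^X_S=\sigma(X^S)=\sigma(Y^S)=\FF^Y_S$.

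Condition \ref{observable:consistency:one} must be treated separately, because $\Im X=\Im Y$ does \emph{not} entail Galmarino's pathspace hypothesis \ref{cond:stoppedd:one}, i.e. $\Im X^S\subset\Im X$: a stopped path is frozen after $S$ and need not be a genuine path of either process (one can build a two-point example with $\Im X=\Im Y$ in which the common frozen path lies outside $\Im X$), so the route through $\sigma(X^S)$ is unavailable. Instead I would prove $\FF^X_S\subset\FF^Y_S$ directly, the reverse inclusion following by symmetry. Fix $A\in\FF^X_S$ and write $\mathbbm{1}_A=F\circ X$ with $F$ measurable (possible since $A\in\FF^X_\infty=\sigma(X)$). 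For the membership $A\in\FF^Y_\infty$: given $\omega$, pick (using $\Im Y=\Im X$) an $\omega'$ with $X(\omega')=Y(\omega)$; since $X^S=Y^S$ forces $X(\omega)$ and $X(\omega')$ to agree on $[0,S(\omega)]$, Lemma~\ref{lemma:respect_stopping_time} yields $\mathbbm{1}_A(\omega)=\mathbbm{1}_A(\omega')=F(Y(\omega))$, whence $\mathbbm{1}_A=F\circ Y$ and $A\in\sigma(Y)=\FF^Y_\infty$.

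It then remains to show $A\cap\{S\le t\}\in\FF^Y_t$ for every $t$, and here the two decisive facts are that $\Im X=\Im Y$ projects to $\Im(X\vert_{[0,t]})=\Im(Y\vert_{[0,t]})$, and that $\{S\le t\}\in\FF^Y_t$ because $S$ is \emph{also} an $\FF^Y$-stopping time. Writing $\mathbbm{1}_{A\cap\{S\le t\}}=H\circ(X\vert_{[0,t]})$, the horizon-$t$ version of the previous step (choosing $\omega'$ with $X\vert_{[0,t]}(\omega')=Y\vert_{[0,t]}(\omega)$ and applying Lemma~\ref{lemma:respect_stopping_time} on the set $\{S\le t\}$) shows that $\mathbbm{1}_{A\cap\{S\le t\}}$ and $H\circ(Y\vert_{[0,t]})$ coincide on $\{S\le t\}$; multiplying $H\circ(Y\vert_{[0,t]})$ by the $\FF^Y_t$-measurable indicator of $\{S\le t\}$ then exhibits $\mathbbm{1}_{A\cap\{S\le t\}}$ as an $\FF^Y_t$-measurable function of $Y\vert_{[0,t]}$, giving $A\cap\{S\le t\}\in\FF^Y_t$ and hence $A\in\FF^Y_S$. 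The main obstacle is precisely this case \ref{observable:consistency:one}: the failure of the image condition means Theorem~\ref{theorem:galmarino} cannot be used as a black box, and the argument instead hinges on reinserting the indicator $\mathbbm{1}_{\{S\le t\}}$ (legitimate exactly because $S$ stops \emph{both} filtrations) together with the horizon-wise compatibility of the two image sets.
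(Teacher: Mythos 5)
Your proof is correct and takes essentially the same route as the paper's: cases (O1) and (O3) are reduced, exactly as in the paper, to Theorem~\ref{theorem:galmarino} (yielding $\FF^X_S=\sigma(X^S)=\sigma(Y^S)=\FF^Y_S$, with the containment hypothesis in (O3)(b) supplying the measurability of the stopped process, as you note), and case (O2) is handled by a direct argument hinging on Lemma~\ref{lemma:respect_stopping_time}. The only, immaterial, divergence is within (O2): the paper picks $\omega'$ with $X(\omega')=Y(\omega)$ (full-path matching) and thereby obtains $\mathbbm{1}_{A\cap\{S\leq t\}}=F\circ Y\vert_{[0,t]}$ on all of $\Omega$ with no indicator insertion needed, whereas you match paths only on the horizon $[0,t]$ and then multiply by $\mathbbm{1}_{\{S\leq t\}}$, which is legitimate because $S$ being an $\FF^Y$-stopping time is among the hypotheses.
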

\begin{remark}\label{remark:observational_consistency}
If $T=\mathbb{N}_0$, then rather than requiring that $S$ be a stopping time of both $\FF^X$ and $\FF^Y$, it follows from Lemma~\ref{lemma:denumerable} that it is sufficient  for $S$ to be a stopping time of just one of them.
The same is true when \ref{observable:consistency:two} obtains, as long as the conditions of Proposition~\ref{lemma:continuous} are met for the time $S$ and both the processes $X$ and $Y$.
\end{remark}
\begin{proof}
If \ref{observable:consistency:zero}  or \ref{observable:consistency:two} hold, then the claim follows immediately from Theorem~\ref{theorem:galmarino}.

If \ref{observable:consistency:one} holds, let $A\in \FF^X_S$, $t\in T$. Then $\mathbbm{1}_{A\cap \{S\leq t\}}=F\circ X\vert_{[0,t]}$ for some $\Epsilon^{\otimes [0,t]}$/$2^{\{0,1\}}$-measurable $F$. Moreover, for each $\omega\in \Omega$, there is an $\omega'\in \Omega$ with $X(\omega')=Y(\omega)$, hence $X(\omega)$ agrees with $Y(\omega)=X(\omega')$  on $T\cap [0,S(\omega)]$, and thus thanks to Lemma~\ref{lemma:respect_stopping_time}, $S(\omega)=S(\omega')$ and  $\mathbbm{1}_A(\omega)=\mathbbm{1}_A(\omega')$. This implies that $F\circ Y\vert_{[0,t]}(\omega)=F \circ X\vert_{[0,t]}(\omega')=\mathbbm{1}_{A\cap \{S\leq t\}}(\omega')=\mathbbm{1}_{A\cap \{S\leq t\}}(\omega)$, i.e. $\mathbbm{1}_{A\cap \{S\leq t\}}=F\circ Y\vert_{[0,t]}$. The case with $t=\infty$ is similar.
\end{proof}
We also have:

\begin{proposition}[Monotonicity of information]\label{proposition:information_increases}
Let $Z$ be a process (on $\Omega$, with time domain $T$ and values in $E$) and $U\leq V$ be two stopping times of $\FF^Z$. If either  
\begin{enumerate}[(N1)]
\item $T=\mathbb{N}_0$ 
\item[or else]
\item all of
\begin{enumerate}
\item $(\Omega, \GG)$ is Blackwell for some $\sigma$-field $\GG\supset \sigma(Z^V)\lor \sigma(Z^U)$.
 \item $(\Im Z^V,\Epsilon^{\otimes T}\vert_{\Im Z^V})$  is Hausdorff.
 \item[and]
\item $\sigma(Z^V)$ is separable, 
\end{enumerate}
\end{enumerate}
 then $\sigma(Z^U)\subset \sigma(Z^V)$. 
\end{proposition}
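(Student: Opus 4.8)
The plan is to treat the two regimes of the hypothesis separately, both ultimately resting on the Key Lemma (Lemma~\ref{lemma:respect_stopping_time}), which records that membership of $\FF^Z_S$ and the value of the stopped process $Z^S$ are determined pathwise, up to the stopping time, by the pre-$S$ trajectory. In the discrete case (N1) I would simply chain together identities coming from the generalized Galmarino test, while in the continuous case (N2) I would argue directly at the level of atoms and close with Blackwell's Theorem (Theorem~\ref{Black}).

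For the discrete case, part (1) of Theorem~\ref{theorem:galmarino} applies with no extra hypotheses when $T=\mathbb{N}_0$, giving $\sigma(Z^U)=\FF^Z_U$ and $\sigma(Z^V)=\FF^Z_V$. It then remains only to invoke the elementary monotonicity $\FF^Z_U\subset\FF^Z_V$ valid whenever $U\leq V$: if $A\in\FF^Z_U$ then, using $\{V\leq t\}\subset\{U\leq t\}$, one has $A\cap\{V\leq t\}=(A\cap\{U\leq t\})\cap\{V\leq t\}\in\FF^Z_t$ for every $t\in T$, so $A\in\FF^Z_V$. Chaining the three relations yields $\sigma(Z^U)=\FF^Z_U\subset\FF^Z_V=\sigma(Z^V)$.

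For the continuous case I would take the Blackwell space of hypothesis (N2)(a) to be $(\Omega,\GG)$, the separable sub-$\sigma$-field of Blackwell's Theorem to be $\sigma(Z^V)$ (separable by (N2)(c)), and the target sub-$\sigma$-field to be $\sigma(Z^U)$; by Theorem~\ref{Black} it then suffices to show that every atom of $\sigma(Z^U)$ is a union of atoms of $\sigma(Z^V)$, i.e.\ that $\omega$ and $\omega'$ sharing a common atom of $\sigma(Z^V)$ necessarily share a common atom of $\sigma(Z^U)$. So suppose $\omega$ and $\omega'$ lie in one atom of $\sigma(Z^V)$. Since $(\Im Z^V,\Epsilon^{\otimes T}\vert_{\Im Z^V})$ is Hausdorff (hypothesis (N2)(b)), $Z^V$ is constant on the atoms of $\sigma(Z^V)$, so $Z^V(\omega)=Z^V(\omega')$; reading off the coordinates $t\leq V(\omega)\land V(\omega')$ gives $Z_t(\omega)=Z_t(\omega')$ for all such $t$. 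Because $U\leq V$ we have $U(\omega)\land U(\omega')\leq V(\omega)\land V(\omega')$, so this trajectory agreement persists for all $t\leq U(\omega)\land U(\omega')$. Applying the Key Lemma (Lemma~\ref{lemma:respect_stopping_time}) to the $\FF^Z$-stopping time $U$ then forces $Z^U(\omega)=Z^U(\omega')$, whence $\omega$ and $\omega'$ do share an atom of $\sigma(Z^U)$. Blackwell's Theorem now delivers $\sigma(Z^U)\subset\sigma(Z^V)$.

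The one genuinely delicate point, I expect, is the organization of the continuous-time argument so that it proceeds through atoms and Blackwell's Theorem rather than through the identity $\sigma(Z^S)=\FF^Z_S$: the latter route would require the stopped process to be $\FF^Z_S$-measurable, which is granted for free only in discrete time (or under progressive measurability), and is not available for an arbitrary process. The small but essential observation that makes the direct argument work is the inequality $U(\omega)\land U(\omega')\leq V(\omega)\land V(\omega')$, which transports pre-$V$ path agreement to pre-$U$ path agreement and thereby lets a single application of the Key Lemma finish the proof.
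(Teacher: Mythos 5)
Your proposal is correct and follows essentially the same route as the paper: in discrete time it invokes the generalized Galmarino test (Theorem~\ref{theorem:galmarino}) together with monotonicity of $\FF^Z$ at stopping times, and in continuous time it passes through atoms — Hausdorffness of $(\Im Z^V,\Epsilon^{\otimes T}\vert_{\Im Z^V})$ giving $Z^V(\omega)=Z^V(\omega')$ on atoms of $\sigma(Z^V)$, the Key Lemma~\ref{lemma:respect_stopping_time} applied to $U$ giving $Z^U(\omega)=Z^U(\omega')$, and Blackwell's Theorem closing the argument. The only cosmetic difference is that the paper also applies the Key Lemma to $V$ to record $V(\omega)=V(\omega')$ before concluding, which your direct application to $U$ renders unnecessary.
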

\begin{proof}
In the discrete case the result follows at once from Theorem~\ref{theorem:galmarino}. In the continuous case, we claim that the assumptions imply that every atom of $\sigma(Z^U)$ is a union of the atoms of $\sigma(Z^V)$.
To see this, let $\omega$ and $\omega'$ belong to the same atom of $\sigma(Z^V)$; then, since $(\Im Z^V,\Epsilon^{\otimes T}\vert_{\Im Z^V})$  is Hausdorff, $Z^V(\omega)=Z^V(\omega')$, hence  Lemma~\ref{lemma:respect_stopping_time} implies that $V(\omega)=V(\omega')$ and $U(\omega)=U(\omega')$, and so \emph{a fortiori} $Z^U(\omega)=Z^U(\omega')$, which implies that $\omega$ and $\omega'$ belong to the same atom of $\sigma(Z^U)$. Now apply Blackwell's Theorem.
\end{proof}

\subsection{The completed case}\label{section:case_with_completions}
We have studied in the previous subsection natural filtrations, 
now we turn our attention to their completions. For a filtration $\GG$ on $\Omega$ and a \emph{complete} probability measure $\PP$, whose domain includes $\GG_\infty$, we denote by $\overline{\GG}^{\PP}$ the \textbf{completed filtration} given by $\overline{\GG}^{\PP}_t \eqdef \GG_t\lor \NN$, $\NN$ being the $\PP$-null sets; likewise if the domain of $\PP$ includes a $\sigma$-field $\AA$ on $\Omega$, then $\overline{\AA}^{\PP}\eqdef \AA\lor \NN$ ($=\{A'\text{ from the domain of }\PP\text{ such that }\exists A\in \AA\text{ with }\PP(A\triangle A')=0\}$). For any other unexplained notation, that we shall use, we refer the reader to the beginning of section~\ref{section:the_abstract_case}. 
Recall in particular that $T=\mathbb{N}_0$ or $T=[0,\infty)$, $\Omega$ is a set and $(E,\Epsilon)$ is a measurable space.

First, all is well in the discrete case:
\begin{lemma}\label{lemma:observational_consistency_denumerable_completions}
Let $T=\mathbb{N}_0$ and  $\GG$ be a filtration on $\Omega$. Furthermore, let $\PP$ be a complete probability measure on $\Omega$, whose domain includes $\GG_\infty$ and let $S$ be a $\overline{\GG}^{\PP}$-stopping time. Then:
\begin{enumerate}
\item $S$ is $\PP$-a.s. equal to a stopping time $S'$ of $\GG$; 
\item for any $\GG$-stopping time $U$ which is $\PP$-a.s. equal to $S$, $\overline{\GG_{U}}^{\PP}=\overline{\GG}^{\PP}_S$;
\item[and]
\item if $U$ is any random time, $\PP$-a.s equal to $S$, then it is, in fact, a $\overline{\GG}^{\PP}$-stopping time, and $\overline{\GG}^{\PP}_S=\overline{\GG}^{\PP}_U$.
\end{enumerate}
\end{lemma}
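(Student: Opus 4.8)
The plan is to handle the three assertions in turn, exploiting throughout that the time domain $\mathbb{N}_0$ is countable---so that a stopping time is completely encoded by the nondecreasing sequence of events $(\{S\leq n\})_{n\in\mathbb{N}_0}$---and that every $\overline{\GG}^{\PP}_n=\GG_n\lor\NN$ already contains all $\PP$-null sets.

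For the first assertion I would start from $\{S\leq n\}\in\overline{\GG}^{\PP}_n$, choosing for each $n$ some $C_n\in\GG_n$ with $\PP(C_n\triangle\{S\leq n\})=0$. The $C_n$ need not be nondecreasing, so the first step is to \emph{monotonize}: put $C_n'\eqdef\cup_{m\leq n}C_m\in\GG_n$. Since $(\{S\leq m\})_m$ is nondecreasing, $C_n'$ still differs from $\{S\leq n\}$ by a null set, while now $C_n'\subset C_{n+1}'$. Defining $S'(\omega)\eqdef\inf\{n:\omega\in C_n'\}$ (with $\inf\emptyset\eqdef\infty$) gives a genuine $\GG$-stopping time with $\{S'\leq n\}=C_n'\in\GG_n$, and since there are only countably many indices, $\{S'\neq S\}\subset\cup_n(C_n'\triangle\{S\leq n\})$ is $\PP$-null, so $S'=S$ $\PP$-a.s.

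For the second assertion I would prove the two inclusions separately. The inclusion $\overline{\GG_{U}}^{\PP}\subset\overline{\GG}^{\PP}_S$ is routine: for $A\in\GG_U$ the event $A\cap\{S\leq n\}$ agrees $\PP$-a.s. with $A\cap\{U\leq n\}\in\GG_n$ (using $S=U$ $\PP$-a.s.), hence lies in $\overline{\GG}^{\PP}_n$, so $A\in\overline{\GG}^{\PP}_S$; as $\overline{\GG}^{\PP}_S\supset\NN$, adding null sets closes the inclusion. The reverse inclusion $\overline{\GG}^{\PP}_S\subset\overline{\GG_{U}}^{\PP}$ is where the real work lies, and I expect it to be the main obstacle, because one must manufacture, out of merely a.s.-defined approximants, an honest member of $\GG_U$. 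Given $A\in\overline{\GG}^{\PP}_S$, pick $A'\in\GG_\infty$ with $A\triangle A'$ null and, for each $n$, pick $D_n\in\GG_n$ with $D_n\triangle(A\cap\{S\leq n\})$ null. The key device, available precisely because $T=\mathbb{N}_0$, is to reassemble these fragments along the countable partition $(\{U=n\})_{n\in\mathbb{N}_0\cup\{\infty\}}$ (each $\{U=n\}\in\GG_n$, since $U$ is a $\GG$-stopping time): set $B\eqdef\left(\cup_n(D_n\cap\{U=n\})\right)\cup(A'\cap\{U=\infty\})$. A direct computation $B\cap\{U\leq m\}=\cup_{n\leq m}(D_n\cap\{U=n\})\in\GG_m$ shows $B\in\GG_U$, while piecing the a.s.-agreements together over the countable partition (and invoking $S=U$ a.s. to pass from $\{S\leq n\}$ to $\{U\leq n\}$) gives $B=A$ $\PP$-a.s., whence $A\in\overline{\GG_{U}}^{\PP}$.

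For the third assertion, since $\{U\leq n\}$ agrees $\PP$-a.s. with $\{S\leq n\}\in\overline{\GG}^{\PP}_n$ and $\overline{\GG}^{\PP}_n$ already contains every null set, $\{U\leq n\}\in\overline{\GG}^{\PP}_n$, so $U$ is automatically a $\overline{\GG}^{\PP}$-stopping time. The equality $\overline{\GG}^{\PP}_S=\overline{\GG}^{\PP}_U$ is then immediate and symmetric: for $A\in\overline{\GG}^{\PP}_\infty$ the events $A\cap\{S\leq n\}$ and $A\cap\{U\leq n\}$ differ by a null set, and completeness of $\overline{\GG}^{\PP}_n$ means one belongs to $\overline{\GG}^{\PP}_n$ if and only if the other does. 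Thus no monotonization or reassembly is needed here---the completed filtration is robust under a.s.-modification of the time---and only the second assertion genuinely uses the finer discrete partitioning argument.
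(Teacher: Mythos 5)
Your proof is correct and follows essentially the same route as the paper's: approximate the stopping-time events by members of $\GG_n$, build an honest $\GG$-stopping time from them, and reassemble $\GG_n$-approximants of $A\cap\{S\leq n\}$ along the countable partition $(\{U=n\})_{n\in\mathbb{N}_0\cup\{\infty\}}$ to produce a genuine element of $\GG_U$. The only deviations are cosmetic: you monotonize the events $\{S\leq n\}$ where the paper works with the slices $\{S=n\}$, and in the third assertion you argue directly from completeness of each $\overline{\GG}^{\PP}_n$ where the paper re-invokes the first two parts.
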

\begin{proof}
\leavevmode
\begin{enumerate}
\item For each $n\in \mathbb{N}_0$, we may find an $A_n\in \GG_n$, such that $\{S=n\}=A_n$, $\PP$-a.s. Then $S'\eqdef \sum_{n\in \mathbb{N}_0}n\mathbbm{1}_{A_n}+\infty\mathbbm{1}_{\Omega\backslash \cup_{m\in \mathbb{N}_0}A_m}$ is a $\GG$-stopping time, $\PP$-a.s. equal to $S$. 

\item Now let $U$ be any $\GG$-stopping time, $\PP$-a.s. equal to $S$. To show $\overline{\GG_U}^{\PP}\subset \overline{\GG}^{\PP}_S$, it suffices to note that (i) $\NN$ is contained in $\overline{\GG}^{\PP}_S$ and (ii) $\GG_U\subset \overline{\GG}^{\PP}_S$. Conversely, if $A\in \overline{\GG}^{\PP}_S$, then for each $n\in \mathbb{N}_0\cup \{\infty\}$, $A\cap \{S=n\}=B_n$, $\PP$-a.s., for some $B_n\in \GG_n$, and hence the event $\cup_{n\in \mathbb{N}_0\cup \{\infty\}}B_n\cap \{U=n\}$ belongs to $\GG_U$, and is $\PP$-a.s. equal to $A$.

\item Finally, let $U$ be a random time,  $\PP$-a.s equal to $S$. For each $n\in \mathbb{N}_0\cup \{\infty\}$, there is then a $C_n\in \GG_n$ with $\{U=n\}=\{S=n\}=C_n$, $\PP$-a.s., so $U$ is a $\overline{\GG}^{\PP}$-stopping time. It follows that we can find $U'$, a $\GG$-stopping time, $\PP$-a.s. equal to $U$, hence $S$, and thus with $\overline{\GG}^{\PP}_S=\overline{\GG_{U'}}^{\PP}=\overline{\GG}^{\PP}_U$.
\end{enumerate}
\end{proof}

\begin{corollary}\label{corollary:observational_consistency_completions_discrete}
Suppose that $T=\mathbb{N}_0$; $X$ and $Y$ are processes (on $\Omega$, with time domain $\mathbb{N}_0$ and values in $E$); $\PP^X$ and $\PP^Y$ are complete probability measures on $\Omega$ whose domains contain $\FF^X_\infty$ and $\FF^Y_\infty$, respectively, and with the \emph{same null sets}. Suppose furthermore $S$ is an $\overline{\FF^X}^{\PP^X}$ and an $\overline{\FF^Y}^{\PP^Y}$  stopping time, with $X^S=Y^S$, $\PP^X$ and $\PP^Y$-a.s. Then $\overline{\FF^X}^{\PP^X}_S=\overline{\sigma(X^S)}^{\PP^X}=\overline{\sigma(Y^S)}^{\PP^Y}=\overline{\FF^Y}^{\PP^Y}_S$.
\end{corollary}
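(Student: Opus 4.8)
The plan is to deduce the completed identities from the uncompleted discrete Galmarino identity of Theorem~\ref{theorem:galmarino}(1), together with the structural Lemma~\ref{lemma:observational_consistency_denumerable_completions}, via a single elementary observation about completions. That observation is: if $U,U'$ are maps from $\Omega$ into a common measurable space which agree off a $\PP^X$-null set (equivalently a $\PP^Y$-null set, since the two measures share their null sets $\NN$), then $\sigma(U)\lor\NN=\sigma(U')\lor\NN$; indeed, for $A=U^{-1}(B)\in\sigma(U)$ one has $A\triangle (U')^{-1}(B)\subset\{U\ne U'\}\in\NN$, so $A\in\sigma(U')\lor\NN$, and symmetry gives the reverse inclusion. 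Because $\PP^X$ and $\PP^Y$ have the same null sets, the completions $\overline{(\cdot)}^{\PP^X}$ and $\overline{(\cdot)}^{\PP^Y}$ coincide on the common domain, so I may write $\overline{\AA}=\AA\lor\NN$ without ambiguity.

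First I would establish $\overline{\FF^X}^{\PP^X}_S=\overline{\sigma(X^S)}^{\PP^X}$. Since $S$ is an $\overline{\FF^X}^{\PP^X}$-stopping time, Lemma~\ref{lemma:observational_consistency_denumerable_completions}(1) provides a genuine $\FF^X$-stopping time $S'$ with $S'=S$ $\PP^X$-a.s., and part~(2) of that lemma (applied with $U=S'$) yields $\overline{\FF^X}^{\PP^X}_S=\overline{\FF^X_{S'}}^{\PP^X}$. As $S'$ is a genuine stopping time and $T=\mathbb{N}_0$, the discrete case of Theorem~\ref{theorem:galmarino} gives $\FF^X_{S'}=\sigma(X^{S'})$, hence $\overline{\FF^X_{S'}}^{\PP^X}=\overline{\sigma(X^{S'})}^{\PP^X}$. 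Finally $S'=S$ a.s.\ forces $X^{S'}=X^S$ a.s., so the observation above gives $\overline{\sigma(X^{S'})}^{\PP^X}=\overline{\sigma(X^S)}^{\PP^X}$. Chaining the three equalities proves the claim, and the identical argument applied to $Y$ and $\PP^Y$ proves $\overline{\FF^Y}^{\PP^Y}_S=\overline{\sigma(Y^S)}^{\PP^Y}$.

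It then remains only to identify the two middle terms. Since $X^S=Y^S$ holds $\PP^X$- (equivalently $\PP^Y$-) a.s., the observation above applied once more gives $\overline{\sigma(X^S)}^{\PP^X}=\sigma(X^S)\lor\NN=\sigma(Y^S)\lor\NN=\overline{\sigma(Y^S)}^{\PP^Y}$, and splicing this between the two equalities of the previous paragraph delivers the full chain.

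I do not expect a genuine obstacle in discrete time: the substantive content is already carried by Theorem~\ref{theorem:galmarino}(1) and Lemma~\ref{lemma:observational_consistency_denumerable_completions}, and what remains is essentially bookkeeping. The only points requiring care are that the two completions agree (which is precisely where the shared-null-sets hypothesis is used) and that a.s.-equal maps generate the same completed $\sigma$-field; the real difficulties of Question~\ref{FiltQ} are deferred to the continuous-time setting treated later in this section.
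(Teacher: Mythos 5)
Your proof is correct and follows essentially the same route as the paper's: replace $S$ by genuine stopping times via Lemma~\ref{lemma:observational_consistency_denumerable_completions}, apply the discrete case of Theorem~\ref{theorem:galmarino}, and glue the $X$- and $Y$-sides using the shared null sets. The only cosmetic difference is that you pass explicitly through $\overline{\sigma(X^S)}^{\PP^X}$ and $\overline{\sigma(Y^S)}^{\PP^Y}$, a step the paper leaves implicit in its chain of equalities.
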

 \begin{proof}
From Lemma~\ref{lemma:observational_consistency_denumerable_completions} we can find stopping times $U$ and $V$ of $\FF^X$ and $\FF^Y$, respectively, both $\PP^X$ and $\PP ^Y$-a.s. equal to $S$. The event $\{X^{U}=Y^{V}\}$ is $\PP^X$ and $\PP^Y$-almost certain. It then follows, from Theorem~\ref{theorem:galmarino} and Lemma~\ref{lemma:observational_consistency_denumerable_completions} again, that $\overline{\FF^X}^{\PP^X}_S=\overline{\FF^X_U}^{\PP^X}=\overline{\sigma(X^U)}^{\PP^X}
=\overline{\sigma(Y^V)}^{\PP^Y}=\overline{\FF^Y_V}^{\PP^Y}
=\overline{\FF^Y}^{\PP^Y}_S$, as desired.
\end{proof}

\begin{corollary}\label{corollary:stopping_times_completions}
Let $T=\mathbb{N}_0$, $X$ be a process (on $\Omega$, with time domain $\mathbb{N}_0$ and values in $E$), $\PP$ be a complete probability measure on $\Omega$ whose domain contains $\FF^X_\infty\lor \FF^{X^S}_\infty$ and $S:\Omega\to T\cup \{\infty\}$ be a random time. Then the following are equivalent: 
\begin{enumerate}
\item $S$ is an $\overline{\FF^X}^{\PP}$-stopping time. 
\item $S$ is an $\overline{\FF^{X^S}}^{\PP}$-stopping time. 
\end{enumerate}
\end{corollary}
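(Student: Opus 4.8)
The plan is to transfer the uncompleted equivalence of Lemma~\ref{lemma:denumerable} to the completions by replacing the $\overline{\FF^X}^\PP$- (respectively $\overline{\FF^{X^S}}^\PP$-) stopping time $S$ with a genuine $\FF^X$- (respectively $\FF^{X^S}$-) stopping time agreeing with it $\PP$-a.s., using Lemma~\ref{lemma:observational_consistency_denumerable_completions} to produce such representatives and to pass back. Throughout I will use the elementary observation that if two times $U$ and $U'$ satisfy $U=U'$ $\PP$-a.s., then $X^U=X^{U'}$ $\PP$-a.s. (they coincide pointwise on $\{U=U'\}$), whence, only finitely many coordinates being involved at each level, the completed natural filtrations agree level by level: $\overline{\FF^{X^U}}^\PP=\overline{\FF^{X^{U'}}}^\PP$. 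The one genuine subtlety, addressed below, is that $X^S$ carries the candidate stopping time inside it, so a naive ``take a representative and quote Lemma~\ref{lemma:denumerable}'' reduction regresses in the direction (2)$\Rightarrow$(1). Both applications of Lemma~\ref{lemma:observational_consistency_denumerable_completions} will be legitimate because the domain of $\PP$ contains $\FF^X_\infty\lor\FF^{X^S}_\infty$ and $\PP$ is complete.

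For (1)$\Rightarrow$(2), suppose $S$ is an $\overline{\FF^X}^\PP$-stopping time. By Lemma~\ref{lemma:observational_consistency_denumerable_completions}(1) (with $\GG=\FF^X$) there is an $\FF^X$-stopping time $S'$ with $S'=S$ $\PP$-a.s. Lemma~\ref{lemma:denumerable} then makes $S'$ an $\FF^{X^{S'}}$-stopping time, hence an $\overline{\FF^{X^{S'}}}^\PP$-stopping time; and since $X^{S'}=X^S$ $\PP$-a.s. we have $\overline{\FF^{X^{S'}}}^\PP=\overline{\FF^{X^S}}^\PP$, so $S'$ is an $\overline{\FF^{X^S}}^\PP$-stopping time. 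Finally, applying Lemma~\ref{lemma:observational_consistency_denumerable_completions}(3) with $\GG=\FF^{X^S}$ (its stopping time being $S'$, the random time a.s.\ equal to it being $S$) yields that $S$ itself is an $\overline{\FF^{X^S}}^\PP$-stopping time. The point is that the representative is taken on the $X$-side, where the process does not depend on the stopping time, so Lemma~\ref{lemma:denumerable} applies in its matched form and a.s.\ equality is only used to swap $X^{S'}$ for $X^S$.

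For (2)$\Rightarrow$(1) the first step still applies: by Lemma~\ref{lemma:observational_consistency_denumerable_completions}(1) (with $\GG=\FF^{X^S}$) there is an $\FF^{X^S}$-stopping time $S''$ with $S''=S$ $\PP$-a.s.; put $N\eqdef\{S\ne S''\}\in\NN$. Here I cannot simply invoke Lemma~\ref{lemma:denumerable}, since it would require $S''$ to be a stopping time of $\FF^{X^{S''}}$ rather than of $\FF^{X^S}$, and unwinding this regresses. Instead I would re-run the pathwise sandwich underlying the proof of Lemma~\ref{lemma:denumerable} directly on the full-measure set $\Omega\setminus N$, where $S''=S$ restores the matched situation. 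Concretely, fix $n\in\mathbb{N}_0$; for each $m\le n$ choose $E_m\in\Epsilon^{\otimes[0,m]}$ with $\{S''\le m\}=(X^S\vert_{[0,m]})^{-1}(E_m)$ and set $C_m\eqdef (X\vert_{[0,m]})^{-1}(E_m)\in\FF^X_m$. Using that $X^S_s=X_s$ for $s\le S$, one checks the two pointwise inclusions on $\Omega\setminus N$: if $\omega\notin N$ and $S''(\omega)=m_0\le n$, then $X\vert_{[0,m_0]}(\omega)=X^S\vert_{[0,m_0]}(\omega)\in E_{m_0}$, so $\omega\in C_{m_0}$; conversely, if $\omega\notin N$ and $\omega\in C_m$ with $S(\omega)>m$, then $X^S\vert_{[0,m]}(\omega)=X\vert_{[0,m]}(\omega)\in E_m$ forces $S''(\omega)\le m<S(\omega)=S''(\omega)$, a contradiction, so $S(\omega)\le m\le n$. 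Hence $\{S''\le n\}=\bigcup_{m\le n}C_m$ on $\Omega\setminus N$, i.e.\ $\{S\le n\}=\bigcup_{m\le n}C_m$ $\PP$-a.s., which lies in $\overline{\FF^X}^\PP_n$. As $n$ was arbitrary, $S$ is an $\overline{\FF^X}^\PP$-stopping time.

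The hard part is precisely this self-reference of the stopped process in direction (2)$\Rightarrow$(1): because $X^S$ depends on the very time being tested, the clean representative-and-quote argument available for (1)$\Rightarrow$(2) fails, and one is forced to redo the measurable sandwich of Lemma~\ref{lemma:denumerable} on the full-measure set $\{S=S''\}$ rather than to appeal to that lemma as a black box.
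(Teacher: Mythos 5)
Your proof is correct and follows the paper's own route: the (1)$\Rightarrow$(2) direction uses exactly the paper's three ingredients (Lemma~\ref{lemma:observational_consistency_denumerable_completions}, Lemma~\ref{lemma:denumerable}, and the fact that a.s.\ equal processes generate the same filtration up to null sets), while your re-run of the measurable sandwich for (2)$\Rightarrow$(1) is precisely what the paper means by ``proceed as in the relevant part of the proof of Lemma~\ref{lemma:denumerable}, adding $\PP$-a.s.\ qualifiers as appropriate.'' Your explicit identification of the self-referential obstruction (that a representative $S''$ is a stopping time of $\FF^{X^S}$, not of $\FF^{X^{S''}}$, so Lemma~\ref{lemma:denumerable} cannot be quoted as a black box) is a faithful spelling-out of the detail the paper leaves implicit, not a departure from its method.
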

\begin{proof}
That (1)$\Rightarrow$(2) is clear from Lemma~\ref{lemma:observational_consistency_denumerable_completions}, Lemma~\ref{lemma:denumerable} and the fact that two processes, which are versions of each other, generate the same filtration, up to null sets. For the reverse implication, one can proceed as in  the relevant part of the proof of Lemma~\ref{lemma:denumerable}, adding $\PP$-a.s. qualifiers as appropriate.
\end{proof}
The continuous-time case is much more involved. Indeed, we have the following significant negative results:
\begin{example}\label{example:completions_one}Consider the following setup.
$\Omega=(0,\infty)\times \{0,1\}$; $\FF$ is the product of the Lebesgue $\sigma$-field on $(0,\infty)$ and of the power set on $\{0,1\}$;  $\PP=\Exp(1)\times \Unif(\{0,1\})$ is the product law on $\FF$ (which is complete); $e$ (respectively $I$) is the projection onto the first (respectively second) coordinate. The process $N$ is given by  $N_t=(t-e)\mathbbm{1}_{[0,t]}(e)I$  for $t\in [0,\infty)$ (starting at zero, the process $N$ departs from zero at time $e$ with unit positive drift, or remains at zero for all times, with equal probability, independently of $e$). Its completed natural filtration, $\overline{\FF^N}^{\PP}$, is already right-continuous.

For, if $t\in [0,\infty)$, $\overline{\FF^N}^{\PP}_{t+}=\overline{\FF^N_{t+} }^\PP$; so let $A\in \FF^N_{t+}$, we show $A\in \overline{\FF^N_t}^\PP$. (i) $A\cap \{e=t\}$ is $\PP$-negligible. (ii) Clearly $A=N^{-1}(G)$, for some measurable $G\subset \mathbb{R}^{[0,\infty)}$. Then define for each natural $n\geq 1/t$ (when $t>0$), $L_n:\mathbb{R}^{[0,t]}\to \mathbb{R}^{[0,\infty)}$, by demanding
$$L_n(\omega)(u)=
\begin{cases}
\omega(u), &\text{ for } u\leq t\\
\omega(t)+(u-t)\frac{\omega(t)-\omega(t-1/n)}{1/n}, &\text{ for }u>t
\end{cases}
$$ ($u\in [0,\infty)$, $\omega\in \mathbb{R}^{[0,t]}$), a measurable mapping. It follows that for $t>0$, for each natural $n \geq 1/t$, $N^{-1}(G)\cap \{e\leq t-1/n\}=N\vert_{[0,t]}^{-1}(L_n^{-1}(G))\cap \{e\leq t-1/n\}\in \FF^N_t$. (iii) For each natural $n$, $A\cap \{e>t+1/n\}=N\vert_{[0,t+1/n]}^{-1}(G_n)\cap \{e>t+1/n\}$ for some measurable $G_n\subset \mathbb{R}^{[0,t+1/n]}$, so $A\cap \{e>t+1/n\}$ is $\emptyset$ or $\{e>t+1/n\}$ according as $0$ is an element of $G_n$ or not (note this is a ``monotone'' condition, in the sense that as soon as we once get a non-empty set for some natural $n$, we subsequently get $\{e>t+1/m\}$ for all natural $m\geq n$). It follows that $A\cap \{e>t\}=\cup_{n\in \mathbb{N}}(A\cap \{e>t+1/n\})\in \{\emptyset,\{e>t\}\}\subset \FF^N_t$.

Further, let $U$ be the first entrance time of the process $N$ to $(0,\infty)$. By the D\'ebut Theorem, this is a stopping time of $\overline{\FF^N}^\PP$, but it is not $\PP$-a.s. equal to any stopping time of $\FF^N$ at all.

For suppose that it were $\PP$-a.s. equal to a stopping time $V$ of $\FF^N$. Then there would be a set $\Omega'$, belonging to $\FF$, of full $\PP$-measure, and such that $V=U$ on $\Omega'$. Tracing everything ($\FF$, $\PP$, $N$, $a$, $e$, $V$) onto $\Omega'$, we would obtain ($\FF'$, $\PP'$, $N'$, $a'$, $e'$, $V'$), with (i) $V'$ equal to the first entrance time of $N'$ to $(0,\infty)$ and (ii) $V'$ a stopping time of $\FF^{N'}$, the natural filtration of $N'$. Note that $N'_t=a'(t-e')\mathbbm{1}_{[0,t]}(e')$, $t\in[ 0,\infty)$. Now take $\{\omega,\omega'\}\subset \Omega'$ with $a(\omega)=1$, $a(\omega')=0$, denote $t\eqdef e(\omega)$. Then $N'\vert_{[0,t]}(\omega)=N'\vert_{[0,t]}(\omega')$, so $\omega$ and $\omega'$ should belong to the same atom of $\FF^{N'}_t$; yet $\{V'\leq t\}\in \FF^{N'}_t$, with $\mathbbm{1}_{\{V'\leq t\}}(\omega)=1$ and $\mathbbm{1}_{\{V'\leq t\}}(\omega')=0$, a contradiction.

Moreover, $\overline{\FF^N}^{\PP}_U\ne \overline{\sigma(N^U)}^{\PP}$, since the event $A\eqdef \{U<\infty\}=\{a=1\}$ that $N$ ever assumes a positive drift belongs to  $\overline{\FF^N}^{\PP}_U$ (which fact is clear), but not to $\overline{\sigma(N^U)}^{\PP}=\overline{\sigma(0)}^\PP$, the trivial $\sigma$-field (it is also obvious; $\PP(a=1)=1/2\notin \{0,1\}$).\finish
\end{example}

\begin{example}\label{example:completions_two}
It is even worse. Let $\Omega=(0,\infty)\times \{-2,-1,0\}$ be endowed with the law $\PP=\Exp(1)\times \Unif(\{-2,-1,0\})$, defined on the tensor product of the Lebesgue $\sigma$-field on $(0,\infty)$ and the power set of $\{-2,-1,0\}$. Denote by $e$, respectively $I$, the projection onto the first, respectively second, coordinate. Define the process $X$ by $X_t\eqdef (t-e)\mathbbm{1}_{[0,t]}(e)I$, $t\in [0,\infty)$, and the process $Y_t\eqdef (-1)(t-e)\mathbbm{1}_{[0,t]}(e)\mathbbm {1}_{\{-1,-2\}}\circ I $, $t\in [0,\infty)$. The completed natural filtrations of $X$ and $Y$ are already right-continuous. The first entrance time $S$ of $X$ into $(-\infty,0)$ is equal to the first entrance time of $Y$ into $(-\infty,0)$, and this is a stopping time of $\overline{\FF^X}^\PP$ as it is of $\overline{\FF^Y}^\PP$ (but not of $\FF^X$ and not of $\FF^Y$). Moreover, $X^S=0=Y^S$. 

Consider now the event $A\eqdef \{I=-1\}$. It is clear that $A\in \overline{\FF^X}^{\PP}_S$. However, $A\notin \overline{\FF^Y}^{\PP}_S$. For, assuming the converse, we should have, $\PP$-a.s., $\mathbbm{1}_{A\cap \{S\leq 1\}}=F\circ Y\vert_{[0,1]}$ for some, measurable, $F$. In particular, since $A\cap \{S\leq 1\}$ has positive probability, there should be an $\omega\in A\cap \{S\leq 1\}$ with $F(Y\vert_{[0,1]}(\omega))=1$. But also the event $\{I=-2\}\cap \{S\leq 1\}$ has positive probability and is disjoint from $A\cap \{S\leq 1\}$, so there should be an $\omega'\in \{I=-2\}\cap \{S\leq 1\}$ having $F(Y\vert_{[0,1]}(\omega'))=0$. A contradiction, since nevertheless $Y\vert_{[0,1]}(\omega')=Y\vert_{[0,1]}(\omega)$. \finish
\end{example}
The problem here is that in completing the natural filtration the apparently innocuous operation of adding all the events negligible under $\PP$ is done uncountably many times (once for every deterministic time). In particular, this does not correspond to a single completion of the sigma-field generated by the stopped process: completions are not always harmless.

Furthermore, it is not clear to us what a sensible \emph{direct} `probabilistic' analogue of Lemma~\ref{lemma:respect_stopping_time} should be, never mind how to go about proving one.

However, the situation is not entirely bleak, since positive results can be obtained at least for foretellable/predictable stopping times of quasi-left-continuous filtrations  -- as in the case of discrete time -- by an \emph{indirect} method; reducing the `probabilistic' to the `measure-theoretic' case. We use here the terminology of \cite[pp.~127 and 137, Definitions~IV.69,~IV.70 and~IV.84]{dellacheriemeyer}, i.e. given a filtration $\GG$ and a probability measure $\QQ$ on $\Omega$, whose domain includes $\GG_\infty$: 
\begin{definition}
a random time $S:\Omega\to [0,\infty]$ is \textbf{predictable} relative to $\GG$ if the stochastic interval $\llbracket T,\infty\llbracket$ is predictable. It is \textbf{$\QQ$-foretellable} relative to $\GG$ if there exists a $\QQ$-a.s. nondecreasing sequence $(S_n)_{n\geq 1}$ of $\GG$-stopping times with $S_n\leq S$, $\QQ$-a.s for all $n\geq 1$ and such that, again $\QQ$-a.s., $$\lim_{n\to \infty}S_n=S, S_n<S\text{ for all }n\text{ on }\{S>0\};$$ \textbf{foretellable}, if the a.s. qualifications can be omitted. Finally, $\GG$ is \textbf{quasi-left-continuous} if $\GG_T=\GG_{T-}$ for all predictable times $T$ of $\GG$.
\end{definition}
Note that (i) the property of predictability is invariant under passage to the right-continuous augmentation of a filtration, and (ii) in a $\PP$-complete filtration ($\PP$ itself assumed complete), the notions of predictable, foretellable and $\PP$-foretellable stopping times coincide \cite[ p. 127, IV.70; p. 128, Theorem~IV.71  and  p. 132, Theorem~IV.77]{dellacheriemeyer}.

The following is now a complement to \cite[p.~120, Theorem~IV.59  and  p.~133, Theorem~IV.78]{dellacheriemeyer}  \cite[p. 5,  Lemma~1.19]{jacod}, and an analogue of the discrete statement of Lemma~\ref{lemma:observational_consistency_denumerable_completions}:

\begin{proposition}\label{proposition:represent_predictable}
Let $T=[0,\infty)$, $\GG$ be a filtration on $\Omega$. Let $\PP$ be a complete probability measure on $\Omega$, whose domain includes $\GG_\infty$ and $S$ be a predictable stopping time relative to $\overline{\GG}^{\PP}$ that we assume is quasi-left-continuous. Then:
\begin{enumerate}[(F1)]
\item\label{fore1} $S$ is $\PP$-a.s. equal to a predictable stopping time $W$ of $\GG$. 
\item\label{fore2} Moreover, if $U$ is any $\GG$-stopping time, $\PP$-a.s. equal to $S$, then $\overline{\GG}^\PP_S=\overline{\GG_U}^\PP$. 
\item Finally, if $S'$ is another random time, $\PP$-a.s equal to $S$, then it is a predictable $\overline{\GG}^{\PP}$-stopping time, and $\overline{\GG}^{\PP}_S=\overline{\GG}^{\PP}_{S'}$.
\end{enumerate}
\end{proposition}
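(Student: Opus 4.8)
The plan is to dispatch the three assertions in increasing order of subtlety, isolating the single genuinely hard point in (F1). I would first record two elementary facts about the $\PP$-complete filtration $\overline{\GG}^{\PP}$, to be used repeatedly. \textbf{Fact (a):} if $R$ is a $\overline{\GG}^{\PP}$-stopping time and $R'=R$ $\PP$-a.s., then $R'$ is again a $\overline{\GG}^{\PP}$-stopping time and $\overline{\GG}^{\PP}_{R'}=\overline{\GG}^{\PP}_{R}$; this is checked by absorbing the null set $\{R\ne R'\}$ into $\overline{\GG}^{\PP}_t$ in the relations $\{R'\le t\}=\{R\le t\}$ and $A\cap\{R'\le t\}=A\cap\{R\le t\}$ ($A\in\overline{\GG}^{\PP}_R$). \textbf{Fact (b):} since $\overline{\GG}^{\PP}$ is $\PP$-complete, a process indistinguishable from a $\mathcal P(\overline{\GG}^{\PP})$-measurable one is itself $\mathcal P(\overline{\GG}^{\PP})$-measurable, so predictability relative to $\overline{\GG}^{\PP}$ is a property of the $\PP$-a.s. class of a time. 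Given these, (F2) and (F3) will be essentially bookkeeping, while (F1) requires the cited theory.

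For (F1) I would reduce to the measure-theoretic level by comparing predictable $\sigma$-fields. The generators $\{0\}\times G_0$ ($G_0\in\GG_0$) and $(s,t]\times G$ ($G\in\GG_s$, $s<t$) of $\mathcal P(\GG)$, when formed from $\overline{\GG}^{\PP}_s=\GG_s\lor\NN$ instead of $\GG_s$, differ from honest $\mathcal P(\GG)$-generators only by $\PP$-evanescent sets (sets whose $\Omega$-projection is $\PP$-null); a good-sets argument then shows that every member of $\mathcal P(\overline{\GG}^{\PP})$ is indistinguishable from a member of $\mathcal P(\GG)$. Since $S$ is predictable relative to $\overline{\GG}^{\PP}$, $\llbracket S,\infty\llbracket\in\mathcal P(\overline{\GG}^{\PP})$ is indistinguishable from some $H\in\mathcal P(\GG)$. \emph{The hard step}, and the one I expect to be the main obstacle, is to manufacture from this predictable \emph{set} $H$ --- known only up to evanescence --- an honest $\GG$-predictable \emph{time} $W$ with $\llbracket W,\infty\llbracket$ indistinguishable from $\llbracket S,\infty\llbracket$ (whence $W=S$ $\PP$-a.s.). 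The difficulty is structural: the strict running supremum of $\mathbbm{1}_H$ is $\GG$-predictable but recovers only the open interval $\llparenthesis S,\infty\rrparenthesis$, while the closed-at-$S$ version --- which carries the graph of $S$, i.e. exactly the information that makes $S$ predictable --- is merely optional. Resolving this is precisely the content complemented by the cited results of Dellacherie--Meyer and Jacod (it is here that the quasi-left-continuity of $\overline{\GG}^{\PP}$ is used), and I would invoke them to produce $W$, giving (F1).

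For (F2), let $U$ be a $\GG$-stopping time with $U=S$ $\PP$-a.s. The inclusion $\overline{\GG_U}^{\PP}\subseteq\overline{\GG}^{\PP}_{S}$ is immediate, since $\GG_U\subseteq\overline{\GG}^{\PP}_{U}=\overline{\GG}^{\PP}_{S}$ by Fact (a) and $\NN\subseteq\overline{\GG}^{\PP}_{S}$. For the reverse inclusion I would exploit quasi-left-continuity to write $\overline{\GG}^{\PP}_{S}=\overline{\GG}^{\PP}_{S-}$, the latter being generated by $\overline{\GG}^{\PP}_{0}$ together with the sets $A'\cap\{t<S\}$ ($t\in[0,\infty)$, $A'\in\overline{\GG}^{\PP}_{t}$). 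Each generator is then treated by hand: choosing $B'\in\GG_t$ with $\PP(A'\triangle B')=0$, one has $A'\cap\{t<S\}=B'\cap\{t<U\}$ $\PP$-a.s. (using $S=U$ $\PP$-a.s.), and $B'\cap\{t<U\}\in\GG_U$ by the elementary check that $(B'\cap\{t<U\})\cap\{U\le s\}$ is empty for $s\le t$ and a combination of $\GG_s$-sets for $s>t$. Thus each generator, and also $\overline{\GG}^{\PP}_{0}$, lies in the complete $\sigma$-field $\overline{\GG_U}^{\PP}$, so $\overline{\GG}^{\PP}_{S}=\overline{\GG}^{\PP}_{S-}\subseteq\overline{\GG_U}^{\PP}$. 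I note that this argument uses neither (F1) nor any Blackwell-type hypothesis, only predictability of $S$ and quasi-left-continuity.

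Finally, (F3) is read off from Facts (a) and (b): if $S'=S$ $\PP$-a.s. then Fact (a) gives that $S'$ is a $\overline{\GG}^{\PP}$-stopping time with $\overline{\GG}^{\PP}_{S'}=\overline{\GG}^{\PP}_{S}$, and Fact (b) gives its predictability because $\llbracket S',\infty\llbracket$ is indistinguishable from the predictable set $\llbracket S,\infty\llbracket$. In summary, the only nontrivial labour is the set-to-time passage in (F1); (F2) and (F3) reduce to manipulations with null sets and with the strict-past $\sigma$-field $\overline{\GG}^{\PP}_{S-}$.
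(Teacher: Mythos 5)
Your overall structure is sound and, in (F2), genuinely different from the paper's. The paper proves (F1) exactly as you do---by citing Dellacherie--Meyer, Theorem~IV.78---but then makes (F2) \emph{depend} on (F1): for $A\in\overline{\GG}^{\PP}_S$ it uses quasi-left-continuity (via Kallenberg, Proposition~22.15) to see that the restricted time $S_A=S\1_A+\infty\1_{\Omega\backslash A}$ is again $\overline{\GG}^{\PP}$-predictable, invokes (F1) to produce a $\GG$-stopping time $V$ with $V=S_A$ $\PP$-a.s., and then reconstructs $A$ $\PP$-a.s. as $(A'\cap\{U=\infty\})\cup\{V=U<\infty\}\in\GG_U$, where $A'\in\GG_\infty$ with $A'=A$ $\PP$-a.s. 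You instead use quasi-left-continuity head-on, writing $\overline{\GG}^{\PP}_S=\overline{\GG}^{\PP}_{S-}$, and check the inclusion into $\overline{\GG_U}^{\PP}$ on the generators $A'\cap\{t<S\}$ of the strict-past $\sigma$-field. This is correct, more elementary (no appeal to the restriction result nor to (F1)), and it shows that (F2) is independent of the representation theorem, with quasi-left-continuity entering at exactly one visible point; the paper's route does less hands-on work by recycling (F1). Your (F3) is likewise more direct: the paper gets $\overline{\GG}^{\PP}_S=\overline{\GG}^{\PP}_{S'}$ by applying (F1) and then (F2) twice, whereas your Fact~(a) gives the $\sigma$-field identity in one line.

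Two caveats. First, your Fact~(b) is false as stated for general processes: with the definitions in force here, $\mathcal{P}(\overline{\GG}^{\PP})\subset\overline{\GG}^{\PP}_\infty\otimes\mathcal{B}([0,\infty))$, so every predictable process has Borel $t$-sections; hence a process indistinguishable from $0$ but supported on $N\times V$, with $N$ a $\PP$-null set and $V\subset[0,\infty)$ non-Borel, is not predictable. What your (F3) actually needs is the statement for \emph{times} only: a random time $\PP$-a.s. equal to a predictable time of a $\PP$-complete filtration is itself predictable. That is true, but the clean route is not through indistinguishable processes; it is through the equivalence of predictability with $\PP$-foretellability in a $\PP$-complete filtration (recorded in the paper just before the proposition, citing Dellacherie--Meyer IV.70, IV.71 and IV.77): the announcing sequence that $\PP$-foretells $S$ also $\PP$-foretells $S'$. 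The paper's ``clearly, it is a predictable $\overline{\GG}^{\PP}$-stopping time'' rests on the same equivalence. Second, you misplace the quasi-left-continuity hypothesis: Dellacherie--Meyer IV.78---the citation both you and the paper lean on for (F1)---does not require it; in the proposition it is needed only for (F2), exactly as in your own argument there. Neither point damages your (F2), which is the genuinely new and correct content of your proposal.
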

\begin{proof}
\leavevmode
\begin{enumerate}[(F1)]
\item This is contained in \cite[p.~133, Theorem~IV.78]{dellacheriemeyer}.
\item Now let $U$ be any $\GG$-stopping time, $\PP$-a.s. equal to $S$. The inclusion  $\overline{\GG}^\PP_S \supset \overline{\GG_U}^\PP$ is obvious. Then take $A\in \overline{\GG}^\PP_S$. Since $A\in \overline{\GG}^\PP_\infty=\overline{\GG_\infty}^\PP$, there is an $A'\in \GG_\infty$, such that $A'=A$, $\PP$-a.s. Furthermore, thanks to quasi-left-continuity, $S_A=S\mathbbm{1}_A+\infty\mathbbm{1}_{\Omega\backslash A}$ is again  $\overline{\GG}^{\PP}$-predictable \cite[p. 418, Proposition~22.15]{kallenberg}. Hence, by \ref{fore1}, there exists $V$, a $\GG$-stopping time, with $V=S_A$, $\PP$-a.s. So, 
$$A=(A'\cap \{U=\infty\})\cup \{V=U<\infty\}\in \GG_U,\text{  $\PP$-a.s.}$$ 
\item Finally let $S'$ be a random time, $\PP$-a.s. equal to $S$. Clearly, it is a predictable  $\overline{\GG}^\PP$-stopping time. Moreover, by \ref{fore1}, we can find $W$, a $\GG$-stopping time, $\PP$-a.s. equal to $S$ and  hence  (by hypothesis) $S'$. It follows from \ref{fore2} that $\overline{\GG}^{\PP}_S=\overline{\GG_W}^\PP=\overline{\GG}^{\PP}_{S'}$.
\end{enumerate} 
\end{proof}
From this we can obtain easily some useful  counterparts to the findings of section~\ref{section:the_abstract_case} in the continuous case: Corollaries~\ref{corollary:continuous_stopped_filtration_sigma_field_of_stopped}, \ref{corollary:observational_consistency_completions} and \ref{proposition:information_increases_completions} below. They can be applied to completions of Blackwell spaces in conjunction with (in this order) (i) the fact that a standard Borel space
-valued random element measurable with respect to the completed domain of a probability measure $\QQ$ is $\overline{\QQ}$-a.s. equal to a random element measurable with respect to the uncompleted domain of $\QQ$ ($\overline{\QQ}$ being the completion of $\QQ$) \cite[p.~13, Lemma~1.25]{kallenberg} and (ii) part \ref{fore1}of Proposition~\ref{proposition:represent_predictable}.

\begin{corollary}\label{corollary:continuous_stopped_filtration_sigma_field_of_stopped}
Let $T=[0,\infty)$, $Z$ be a process (on $\Omega$, with time domain $T$ and values in $E$), $\PP$ be a complete probability measure on $\Omega$, whose domain includes $\FF^Z_\infty$, and $W$ be an $\overline{\FF^Z}^\PP$-predictable stopping time with $\overline{\FF^Z}^\PP$ quasi-left-continuous. If, for some process $X$ $\PP$-indistinguishable from $Z$ and a stopping time $S$ of $\FF^X$, with $S=W$, $\PP$-a.s., Condition \ref{cond:stopped:two} holds, then $\overline{\FF^Z}^\PP_W\subset \overline{\sigma(Z^W)}^\PP$. 
\end{corollary}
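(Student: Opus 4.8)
The plan is to transfer the assertion, by way of the given uncompleted data $(X,S)$, to the purely `measure-theoretic' inclusion $\FF^X_S\subset\sigma(X^S)$ supplied by Proposition~\ref{proposition:stopped}, and then to re-complete.

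First I would record the two consequences of $X$ being $\PP$-indistinguishable from $Z$ together with $S=W$ $\PP$-a.s. Since versions of a process generate the same natural filtration up to null sets (as already used in the proof of Corollary~\ref{corollary:stopping_times_completions}), we have $\overline{\FF^X}^\PP=\overline{\FF^Z}^\PP$ as filtrations; in particular $\overline{\FF^X}^\PP$ is quasi-left-continuous, $W$ is a predictable stopping time relative to it, and $\overline{\FF^Z}^\PP_W=\overline{\FF^X}^\PP_W$ (the $\sigma$-field at $W$ being computed identically from equal filtrations). Moreover, on the $\PP$-full event where both the paths of $X$ and $Z$ and the values of $S$ and $W$ coincide, $X^S=Z^W$; hence $X^S=Z^W$ $\PP$-a.s., and consequently $\overline{\sigma(X^S)}^\PP=\overline{\sigma(Z^W)}^\PP$.

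Next I would apply Proposition~\ref{proposition:represent_predictable} with $\GG\eqdef\FF^X$: here $W$ is a predictable stopping time of $\overline{\FF^X}^\PP$ (quasi-left-continuous), and $S$ is a $\GG$-stopping time $\PP$-a.s. equal to $W$, so part~\ref{fore2} yields $\overline{\FF^X}^\PP_W=\overline{\FF^X_S}^\PP$. This is the step that reduces the `probabilistic' filtration-at-a-time to the completion of an \emph{uncompleted} filtration evaluated at an \emph{uncompleted} stopping time, and is the crux of the argument. With this reduction in hand, Condition~\ref{cond:stopped:two} (assumed for the pair $(X,S)$) lets me invoke Proposition~\ref{proposition:stopped}, giving $\FF^X_S\subset\sigma(X^S)$, and hence, on passing to $\PP$-completions, $\overline{\FF^X_S}^\PP\subset\overline{\sigma(X^S)}^\PP$.

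Finally I would chain the identities and the inclusion,
$$\overline{\FF^Z}^\PP_W=\overline{\FF^X}^\PP_W=\overline{\FF^X_S}^\PP\subset\overline{\sigma(X^S)}^\PP=\overline{\sigma(Z^W)}^\PP,$$
which is the claim. There is no substantial obstacle here beyond careful null-set bookkeeping: one must check that indistinguishability genuinely transports predictability and quasi-left-continuity from $\overline{\FF^Z}^\PP$ to $\overline{\FF^X}^\PP$ (so that the hypotheses of Proposition~\ref{proposition:represent_predictable} are met for $\GG=\FF^X$), and that the identity $\overline{\FF^X}^\PP_W=\overline{\FF^X_S}^\PP$ from part~\ref{fore2} correctly licenses the application of the uncompleted Proposition~\ref{proposition:stopped} to $(X,S)$. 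Everything else is routine.
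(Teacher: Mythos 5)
Your proposal is correct and follows essentially the same route as the paper: both reduce to the uncompleted inclusion $\FF^X_S\subset\sigma(X^S)$ from Proposition~\ref{proposition:stopped} via Condition~\ref{cond:stopped:two}, then use Proposition~\ref{proposition:represent_predictable}\ref{fore2} applied to $W$ (as a predictable stopping time of $\overline{\FF^X}^\PP=\overline{\FF^Z}^\PP$, $\PP$-a.s. equal to $S$) to get $\overline{\FF^X}^\PP_W=\overline{\FF^X_S}^\PP$, and conclude by the identical chain $\overline{\FF^Z}^\PP_W=\overline{\FF^X}^\PP_W=\overline{\FF^X_S}^\PP\subset\overline{\sigma(X^S)}^\PP=\overline{\sigma(Z^W)}^\PP$. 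Your explicit verification that indistinguishability transports predictability and quasi-left-continuity to $\overline{\FF^X}^\PP$ is a point the paper leaves implicit, but it is the same argument.
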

\begin{remark}
The reverse inclusion $\overline{\sigma(Z^W)}\subset \overline{\FF^Z}^\PP_W$ is usually trivial. 
\end{remark}
\begin{proof}
According to Proposition~\ref{proposition:stopped}, $\FF^X_S\subset \sigma(X^S)$. Also $\overline{\FF^X}^\PP=\overline{\FF^Z}^\PP$ and $\overline{\sigma(X^S)}^\PP=\overline{\sigma(Z^W)}^\PP$. Taking completions in $\FF^X_S\subset \sigma(X^S)$, by applying \ref{fore2} of Proposition~\ref{proposition:represent_predictable} to the stopping time $W$ of $\overline{\FF^X}^\PP$ (which is $\PP$-a.s. equal to the stopping time $S$ of $\FF^X$), we obtain:  $$\overline{\FF^Z}^\PP_W=\overline{\FF^X}^\PP_W=\overline{\FF^X_S}^\PP\subset \overline{\sigma(X^S)}^\PP=\overline{\sigma(Z^W)}^\PP,$$ as desired.
\end{proof}

\begin{corollary}\label{corollary:observational_consistency_completions}
Let $T=[0,\infty)$; let $Z$ and $W$ be two processes  (on $\Omega$, with time domain $[0,\infty)$ and values in $E$); $\PP^Z$ and $\PP^W$ be probability measures on $\Omega$, with the same null sets, and whose domains include $\FF^Z_\infty$ and $\FF^W_\infty$, respectively; $V$ a predictable $\overline{\FF^Z}^{\PP^Z}$- and  $\overline{\FF^W}^{\PP^W}$-stopping time, with $\overline{\FF^Z}^{\PP^Z}$  and $\overline{\FF^W}^{\PP^W}$ quasi-left-continuous. Suppose furthermore $Z^V=W^V$, $\PP^Z$ and $\PP^W$-a.s.  

If there exist two processes $X$ and $Y$, indistinguishable from $Z$ and $W$, respectively, and stopping times $S$ and $U$ of $\FF^X$ and $\FF^Y$, respectively, with $S=U=V$, $\PP^Z$ and $\PP^W$-a.s. and such that the pairs $(X,S)$ and $(Y,U)$ each satisfy Condition \ref{cond:stopped:two},
then $\overline{\FF^Z}^{\PP^Z}_V=\overline{\sigma(Z^V)}^{\PP^Z}=
\overline{\sigma(W^V)}^{\PP^W}=\overline{\FF^W}^{\PP^W}_V$.
\end{corollary}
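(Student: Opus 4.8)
The plan is to read off the two outer equalities from Corollary~\ref{corollary:continuous_stopped_filtration_sigma_field_of_stopped} and to obtain the middle equality directly from the hypothesis $Z^V=W^V$ a.s.\ together with the coincidence of null sets. First I would treat the $Z$-side: the data $(Z,\PP^Z,V)$ together with the witnesses $(X,S)$ are precisely the hypotheses of Corollary~\ref{corollary:continuous_stopped_filtration_sigma_field_of_stopped}, since $V$ is a predictable stopping time of the quasi-left-continuous filtration $\overline{\FF^Z}^{\PP^Z}$, the process $X$ is $\PP^Z$-indistinguishable from $Z$, $S$ is an $\FF^X$-stopping time with $S=V$ $\PP^Z$-a.s., and $(X,S)$ satisfies Condition~\ref{cond:stopped:two}. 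Hence $\overline{\FF^Z}^{\PP^Z}_V\subset\overline{\sigma(Z^V)}^{\PP^Z}$. The reverse inclusion is the one flagged as trivial in the Remark following that Corollary; it amounts to the $\overline{\FF^Z}^{\PP^Z}_V$-measurability of $Z^V$, which holds once $X$ (hence $Z$) is progressively measurable, in which case Theorem~\ref{theorem:galmarino} even upgrades Proposition~\ref{proposition:stopped} to $\sigma(X^S)=\FF^X_S$ and one completes. Either way the first equality $\overline{\FF^Z}^{\PP^Z}_V=\overline{\sigma(Z^V)}^{\PP^Z}$ follows, and the identical argument applied to $(W,\PP^W,V)$ with witnesses $(Y,U)$ gives the last equality $\overline{\sigma(W^V)}^{\PP^W}=\overline{\FF^W}^{\PP^W}_V$.

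For the middle equality I would exploit that $\PP^Z$ and $\PP^W$ share the same null sets $\NN$, so that both completions amount to adjoining the \emph{same} collection $\NN$; thus $\overline{\sigma(Z^V)}^{\PP^Z}=\sigma(Z^V)\lor\NN$ and $\overline{\sigma(W^V)}^{\PP^W}=\sigma(W^V)\lor\NN$. Since $Z^V=W^V$ off a common null set, for every $B\in\Epsilon^{\otimes T}$ the symmetric difference $(Z^V)^{-1}(B)\,\triangle\,(W^V)^{-1}(B)$ is contained in $\{Z^V\neq W^V\}\in\NN$; hence each generator $(Z^V)^{-1}(B)$ of $\sigma(Z^V)$ lies in $\sigma(W^V)\lor\NN$, giving $\sigma(Z^V)\lor\NN\subset\sigma(W^V)\lor\NN$, and by symmetry equality. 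Chaining the three equalities then yields the assertion.

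The genuinely substantive input is Corollary~\ref{corollary:continuous_stopped_filtration_sigma_field_of_stopped} (and beneath it Proposition~\ref{proposition:stopped} and the reduction of the completed to the uncompleted setting via Proposition~\ref{proposition:represent_predictable}); once that is in hand the present statement is essentially bookkeeping. Accordingly, the only points needing care are (i) verifying the reverse, ``trivial'' inclusions that upgrade the one-sided conclusion of Corollary~\ref{corollary:continuous_stopped_filtration_sigma_field_of_stopped} to the outer equalities, and (ii) checking that the two distinct measures do not obstruct the gluing, which is exactly where the common-null-sets hypothesis is used. I expect (i) to be the main (if modest) obstacle, since it tacitly requires enough path-regularity of $X$ and $Y$ for $Z^V$ and $W^V$ to be measurable at the respective stopped filtrations; this is the standard progressive-measurability fact and is harmless in the Blackwell/right-continuous situations for which Condition~\ref{cond:stopped:two} is designed.
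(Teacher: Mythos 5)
Your proposal is correct and follows essentially the same route as the paper: the outer equalities come from Corollary~\ref{corollary:continuous_stopped_filtration_sigma_field_of_stopped} together with the ``trivial'' reverse inclusion $\sigma(X^S)\subset\FF^X_S$ (completed via Proposition~\ref{proposition:represent_predictable}), and the middle equality from $Z^V=W^V$ a.s.\ plus the shared null sets. Your write-up is in fact slightly more explicit than the paper's, which leaves the middle equality and the null-set bookkeeping implicit.
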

\begin{proof}
The claim follows from Corollary~\ref{corollary:continuous_stopped_filtration_sigma_field_of_stopped}, and the fact that again 
$\sigma(X^S)\subset \FF^X_S$ implies $\overline{\sigma(Z^V)}^{\PP^Z}\subset \overline{\FF^Z}^{\PP^Z}_V$; with a comparable statement holding  for $W$.
\end{proof}

\begin{corollary}\label{proposition:information_increases_completions}
Let $T=[0,\infty)$;  $X$ be a process (on $\Omega$, with time domain $[0,\infty)$ and values in $E$); $\PP$ a complete probability measure on $\Omega$, whose domain includes $\FF^X_\infty$; $S$ and $\stop$ two predictable stopping times of $\overline{\FF^X}^\PP$ with $S\leq \stop$ and $\overline{\FF^X}^\PP$  quasi-left-continuous. Let $U$ and $V$ be two stopping times of the natural filtration of a process $Z$, $\PP$-indistinguishable from $X$, $\PP$-a.s. equal to $S$ and $\stop$, respectively, with $U\leq V$, and such that: 
\begin{enumerate}[(Q1)]
\item\label{order1} $(\Omega, \GG)$ is Blackwell for some $\sigma$-field $\GG\supset \sigma(Z^V)\lor \sigma(Z^U)$, 
\item\label{order2}  $(\Im Z^V,\Epsilon^{\otimes T}\vert_{\Im Z^V})$  is Hausdorff 
\item[and]
\item\label{order3}  $\sigma(Z^V)$ is separable. 
\end{enumerate}
Then $\overline{\sigma(X^S)}^\PP\subset \overline{\sigma(X^\stop)}^\PP$. 
\end{corollary}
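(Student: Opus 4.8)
The plan is to reduce this completed (`probabilistic') statement to the uncompleted (`measure-theoretic') Monotonicity of information, Proposition~\ref{proposition:information_increases}, applied to the process $Z$, and then to transport the resulting inclusion back to $X$ across the $\PP$-indistinguishability of $X$ and $Z$.

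First I would note that $U$ and $V$ are genuine $\FF^Z$-stopping times with $U\le V$, and that conditions \ref{order1}--\ref{order3} are precisely the continuous-time hypotheses (N2) of Proposition~\ref{proposition:information_increases} for the process $Z$ and the pair $U\le V$. That proposition therefore yields at once $\sigma(Z^U)\subset\sigma(Z^V)$; since completion preserves inclusions, $\overline{\sigma(Z^U)}^\PP\subset\overline{\sigma(Z^V)}^\PP$.

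Next I would transfer this to $X$. Because $X$ and $Z$ are $\PP$-indistinguishable and $S=U$, $\stop=V$ hold $\PP$-a.s., there is a single $\PP$-full event on which $X=Z$ as paths and $S=U$, $\stop=V$ simultaneously; on that event $X^S_t=X_{S\wedge t}=Z_{U\wedge t}=Z^U_t$ for every $t$, and likewise $X^\stop=Z^V$. Hence $X^S$ is indistinguishable from $Z^U$ and $X^\stop$ from $Z^V$, and versions of one another generate the same $\sigma$-field up to $\PP$-null sets (the observation already used in the proof of Corollary~\ref{corollary:stopping_times_completions}); thus $\overline{\sigma(X^S)}^\PP=\overline{\sigma(Z^U)}^\PP$ and $\overline{\sigma(X^\stop)}^\PP=\overline{\sigma(Z^V)}^\PP$. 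Chaining the three relations gives
\[
\overline{\sigma(X^S)}^\PP=\overline{\sigma(Z^U)}^\PP\subset\overline{\sigma(Z^V)}^\PP=\overline{\sigma(X^\stop)}^\PP,
\]
which is the desired conclusion.

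I do not anticipate a serious obstacle. In contrast to Corollaries~\ref{corollary:continuous_stopped_filtration_sigma_field_of_stopped} and~\ref{corollary:observational_consistency_completions}, which must invoke part~\ref{fore2} of Proposition~\ref{proposition:represent_predictable} to pass between a completed filtration \emph{at} a stopping time and the corresponding uncompleted object, the present statement only concerns the $\sigma$-fields $\sigma(X^\cdot)$ generated by the stopped processes, so no filtration-at-a-stopping-time bookkeeping is needed; the predictability and quasi-left-continuity hypotheses enter solely to guarantee, via part~\ref{fore1} of Proposition~\ref{proposition:represent_predictable} and the remark preceding the corollary, that the approximating $\FF^Z$-stopping times $U,V$ exist. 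The only point deserving a line of care is the indistinguishability step: one checks that $\{\,X^S_t\neq Z^U_t\text{ for some }t\,\}$ lies in the null event where $X\neq Z$ or $S\neq U$, and that completeness of $\PP$ then forces equality of the completed generated $\sigma$-fields.
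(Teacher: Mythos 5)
Your proposal is correct and follows essentially the same route as the paper: the paper's proof likewise observes that $\overline{\sigma(X^S)}^\PP=\overline{\sigma(Z^U)}^\PP$ and $\overline{\sigma(X^\stop)}^\PP=\overline{\sigma(Z^V)}^\PP$ (via indistinguishability and completeness), and then applies Proposition~\ref{proposition:information_increases} to $Z$, $U$, $V$ under hypotheses \ref{order1}--\ref{order3}. Your write-up merely spells out the indistinguishability bookkeeping that the paper leaves implicit.
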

\begin{remark}
Proposition~\ref{proposition:represent_predictable}\ref{fore1} establishes the existence of $U$ and $V'$, stopping times of $\FF^Z$, $\PP$-a.s. equal to $S$ and $\stop$, respectively, with $U\leq V'$, $\PP$-a.s. 
Defining $V\eqdef V'\mathbbm{1}(U\leq V')+U\mathbbm{1}(U>V')$, $V$ is also a stopping time of $\FF^Z$, $\PP$-a.s. equal to $\stop$, and \emph{it} satisfies $U\leq V$ with certainty. The question is whether these stopping times satisfy conditions \ref{order1}--\ref{order3}. 

\end{remark}
\begin{proof}
We  see  that $\overline{\sigma(X^S)}^\PP=\overline{\sigma(Z^U)}^\PP$ and $\overline{\sigma(X^\stop)}^\PP=\overline{\sigma(Z^V)}^\PP$. Applying Proposition~\ref{proposition:information_increases} gives the result. 
\end{proof}

We are not able to provide a 
\emph{useful}  counterpart to Proposition~\ref{lemma:continuous}.  In the notation of  Corollary~\ref{proposition:information_increases_completions}, Proposition~\ref{proposition:represent_predictable} \emph{does say} that, given a predictable stopping time $P$ of $\overline{\FF^{X^P}}^\PP$, there is a predictable stopping time $U$ of $\FF^{Z^P}$, $\PP$-a.s. equal to $P$. But this \emph{does not \emph{a priori} say} that $U$ is a stopping time of $\FF^{Z^U}$, so one cannot directly apply Proposition~\ref{lemma:continuous}.
\bibliographystyle{plain}
\bibliography{Biblio_stochastic_control}
\appendix
\goodbreak
\section{Miscellaneous technical results}
Throughout this appendix $(\Omega,\FF,\PP)$ is a probability space; $\EE$ denotes expectation with respect to $\PP$. 

\begin{lemma}[On conditioning]\label{conditioninglemma}
Let $X:\Omega\to [-\infty,\infty]$ be a random variable, and $\GG_{i}\subset \FF$, $i=1,2$, be two sub-$\sigma$-fields of $\FF$ agreeing when traced on $A\in \GG_1\cap \GG_2$. Then, $\PP$-a.s. on $A$, $\EE[X\vert \GG_1]=\EE[X\vert \GG_2]$, whenever $X$ has a $\PP$-integrable positive or negative part. 
\end{lemma}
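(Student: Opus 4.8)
The plan is to reduce everything to the single random variable $\1_A X$ and to exploit that $A\in\GG_1\cap\GG_2$ together with the trace agreement $\GG_1\vert_A=\GG_2\vert_A$. Since the hypothesis is symmetric under $X\mapsto -X$, I may assume without loss of generality that $X^-$ is $\PP$-integrable, so that the generalized conditional expectations $\EE[X\vert\GG_i]$ are well defined, have $\PP$-a.s.\ finite negative part, and satisfy the usual identity $\EE[X\1_G]=\EE[\EE[X\vert\GG_i]\1_G]$ for $G\in\GG_i$ (both sides lying in $(-\infty,\infty]$). Set $Y\eqdef\EE[X\vert\GG_1]$. Because $A\in\GG_1$ and $(\1_AX)^-\le X^-$ is integrable, I can pull out the known factor to get $\1_AY=\EE[\1_AX\vert\GG_1]$; in particular $\1_AY$ is $\GG_1$-measurable and vanishes off $A$. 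Since likewise $\1_A\EE[X\vert\GG_2]=\EE[\1_AX\vert\GG_2]$, the claim $\EE[X\vert\GG_1]=\EE[X\vert\GG_2]$ $\PP$-a.s.\ on $A$ is exactly $\1_AY=\EE[\1_AX\vert\GG_2]$ $\PP$-a.s., which is what I will establish.

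First I would check that $\1_AY$ is $\GG_2$-measurable. For any Borel $B$ one has $\{Y\in B\}\cap A\in\GG_1\vert_A=\GG_2\vert_A\subseteq\GG_2$ (using $A\in\GG_2$ for the last inclusion); then splitting according to whether $0\in B$ shows $\{\1_AY\in B\}\in\GG_2$ in either case. Thus the $\GG_1$-measurable, $A$-supported variable $\1_AY$ is \emph{automatically} $\GG_2$-measurable. This is the one place where the trace hypothesis $\GG_1\vert_A=\GG_2\vert_A$ is essential, and it is the crux of the argument.

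Second I would verify the integral identity $\EE[\1_AY\,\1_G]=\EE[\1_AX\,\1_G]$ for every $G\in\GG_2$. Since $\1_AY$ and $\1_AX$ are both supported on $A$, each side is unchanged when $G$ is replaced by $G\cap A$. Now $G\cap A\in\GG_2\vert_A=\GG_1\vert_A$, so $G\cap A=G'\cap A$ for some $G'\in\GG_1$; applying the defining property of $Y=\EE[X\vert\GG_1]$ against $G'$ gives $\EE[\1_AY\1_{G'}]=\EE[\1_AX\1_{G'}]$, and re-intersecting with $A$ carries this equality back from $G'$ to $G$. Combined with the $\GG_2$-measurability from the previous step, this identifies $\1_AY$ as a version of $\EE[\1_AX\vert\GG_2]=\1_A\EE[X\vert\GG_2]$, which completes the proof.

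The main subtlety to watch is integrability: because $X$ need only have an integrable positive or negative part, I must work with generalized conditional expectations throughout, and at each use of "pull $\1_A$ inside" and of the characterizing identity $\EE[\cdot\,\1_G]$ I should confirm the relevant negative (resp.\ positive) parts are integrable so that every integral is well defined in the extended reals. No deeper obstacle arises; once the trace-measurability step is in hand, the rest is the standard characterization of conditional expectation.
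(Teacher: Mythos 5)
Your proof is correct and takes essentially the same route as the paper's: use the trace hypothesis $\GG_1\vert_A=\GG_2\vert_A$ (together with $A\in\GG_1\cap\GG_2$) to show that $\1_A\EE[X\vert\GG_1]$ is $\GG_2$-measurable, then verify the defining integral identity to identify it with $\EE[\1_A X\vert\GG_2]=\1_A\EE[X\vert\GG_2]$. The only inessential difference is that you establish measurability by a direct preimage argument for the particular variable $\1_A\EE[X\vert\GG_1]$, where the paper invokes an approximation argument for a general $\GG_1$-measurable $Z$, and you spell out the integrability bookkeeping that the paper leaves implicit.
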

\begin{proof}
$\mathbbm{1}_AZ$ is $\GG_2$-measurable, for any $Z$ $\GG_1$-measurable, by an approximation argument. Then, $\PP$-a.s., $\mathbbm{1}_A\EE[X\vert \GG_1]=\EE[\mathbbm{1}_AX\vert \GG_2]$, by the very definition of conditional expectation.
\end{proof}
Recall Definition~\ref{accinf}.
\begin{lemma}\label{lemma:endoftime}
Suppose that $\HH$ is a (respectively a $\PP$-complete) 
filtration on $\Omega$ in discrete ($T=\mathbb{N}_0$) or continuous ($T=[0,\infty)$) time and that a sequence $(S_n)_{n\in\mathbb{N}}$ of its stopping times accesses infinity pointwise (respectively $\PP$-a.s.) on $A\subset \Omega$. Then $\HH_\infty\vert_A=\lor_{n\in \mathbb{N}}\HH_{S_n}\vert_A$. 
\end{lemma}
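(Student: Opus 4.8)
The plan is to reduce the equality to a single generator level and then exploit the defining property of stopping-time $\sigma$-fields. The inclusion $(\lor_{n\in\mathbb{N}}\HH_{S_n})\vert_A\subset \HH_\infty\vert_A$ is immediate, since each $\HH_{S_n}$ is by definition a sub-$\sigma$-field of $\HH_\infty$, hence so is their join. For the reverse inclusion I would invoke the standard fact that tracing commutes with generation: writing $\HH_\infty=\sigma(\cup_{t\in T}\HH_t)$, one has $\HH_\infty\vert_A=\sigma_A\big(\cup_{t\in T}(\HH_t\vert_A)\big)$. Because $(\lor_{n}\HH_{S_n})\vert_A$ is itself a $\sigma$-field on $A$, it therefore suffices to prove, for each fixed $t\in T$, the inclusion $\HH_t\vert_A\subset (\lor_{n}\HH_{S_n})\vert_A$.

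To establish this, fix $t\in T$ and $B\in\HH_t$. The key observation is that $B\cap\{t\le S_n\}\in\HH_{S_n}$ for every $n$: checking the definition, for $s\ge t$ one has $\{t\le S_n\}\in\HH_t\subset\HH_s$ and $B\in\HH_t\subset\HH_s$, so $(B\cap\{t\le S_n\})\cap\{S_n\le s\}\in\HH_s$, while for $s<t$ this intersection is empty (the verification that $\{t\le S_n\}=\Omega\setminus\{S_n<t\}\in\HH_t$ is identical in discrete and continuous time). Setting $C\eqdef \bigcup_{n}\big(B\cap\{t\le S_n\}\big)=B\cap\bigcup_n\{t\le S_n\}$, we get $C\in\lor_{n}\HH_{S_n}$. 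Now in the pointwise case every $\omega\in A$ satisfies $\sup_n S_n(\omega)=\infty>t$, so $A\subset\bigcup_n\{t\le S_n\}$ and hence $C\cap A=B\cap A$, giving $B\cap A\in(\lor_{n}\HH_{S_n})\vert_A$ as required.

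For the completed (``$\PP$-a.s.'') case I would let $N$ be the measurable $\PP$-null set of those $\omega\in A$ at which $(S_n(\omega))_{n}$ fails to access infinity, so that $A\subset N\cup\bigcup_n\{t\le S_n\}$. Since $\HH$ is $\PP$-complete, each $\HH_{S_n}$ contains all $\PP$-null sets, hence so does $\lor_{n}\HH_{S_n}$; thus $C'\eqdef C\cup(B\cap N)\in\lor_{n}\HH_{S_n}$, and again $C'\cap A=B\cap A$. The generation step and the trivial inclusion then close the argument exactly as before. The only point demanding genuine care is this treatment of the exceptional null set $N$ via completeness of the stopping-time $\sigma$-fields; the remainder is routine bookkeeping with the definitions, uniform across the discrete and continuous time settings.
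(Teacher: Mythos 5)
Your proof is correct and follows essentially the same route as the paper's: the easy inclusion is immediate, and the reverse inclusion rests on the identity $B\cap A=\bigcup_{n}\bigl(B\cap\{S_n\geq t\}\bigr)\cap A$ for $B\in\HH_t$, with $B\cap\{S_n\geq t\}\in\HH_{S_n}$, which is exactly the paper's argument. You merely spell out the details the paper leaves implicit (the commutation of trace with generation, the stopping-time $\sigma$-field verification, and the explicit null-set bookkeeping in the completed case).
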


\begin{remark}
Note that for any $\mathcal{L}\subset 2^\Omega$ and $A\subset \Omega$, $\sigma_\Omega(\mathcal{L})\vert_A=\sigma_A(\mathcal{L}\vert_A)$, so there is no ambiguity in writing $\lor_{n\in \mathbb{N}}\HH_{S_n}\vert_A$. 
\end{remark}
\begin{proof}
The inclusion $\HH_\infty\vert_A\supset\lor_{n\in \mathbb{N}}\HH_{S_n}\vert_A$ is clear. To establish the reverse inclusion let $t\in T$ and $B\in \HH_t$. \color{black}Then (respectively $\PP$-a.s.) $B\cap A=\cup_{n=1}^\infty (B\cap \{S_n\geq t\})\cap A$ with $B\cap \{S_n\geq t\}\in \HH_{S_n}$.
\end{proof}
The following lemma uses the notation, and is to be understood in the context, of sections~\ref{section:setting} and~\ref{section:cond_payoff_sytem}. 
\begin{lemma}\label{lemma:accessinginfinity}
Let $\{c,d\}\subset \CC$ and $A\subset \Omega$.  Suppose that $(\SS_n)_{n\in\mathbb{N}}$ is a sequence in $\Gtimes$ accessing infinity \{a.s.\} on $A$ for the controls $c$ and $d$, and for which $c\sim_{\SS_n}d$ for each $n\in \mathbb{N}$. Then $\GG^c_\infty\vert_A=\GG^d_\infty\vert_A$. 

If further, $A\in \GG^{c}_{\SS^c_n}$ for all $n\in \mathbb{N}$, and $(\SS^h_n(\omega))_{n\in \mathbb{N}}$ is nondecreasing for \{$\PP^h$-almost\} every $\omega\in A$, each $h\in \{c,d\}$, then $\PP^c\vert_{\GG^c_\infty}$ and $\PP^d\vert_{\GG^d_\infty}$ agree when traced on $A$. 
\end{lemma}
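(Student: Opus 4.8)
The plan is to reduce the claim to the agreement of $\PP^c$ and $\PP^d$ on an algebra that generates the common trace $\sigma$-field $\GG^c_\infty\vert_A=\GG^d_\infty\vert_A$ (this equality being exactly the content of the first part of the lemma), and then to close by the uniqueness theorem for finite measures. First I would record what stability under stopping gives at each stage $n$: since $c\sim_{\SS_n}d$, Definition~\ref{assumption:temporalconsistency} yields $\GG^c_{\SS^c_n}=\GG^d_{\SS^d_n}$ and $\PP^c\vert_{\GG^c_{\SS^c_n}}=\PP^d\vert_{\GG^d_{\SS^d_n}}$. In particular, using the hypothesis $A\in\GG^c_{\SS^c_n}$, we get $A\in\GG^c_{\SS^c_n}=\GG^d_{\SS^d_n}$, so $A$ is measurable for both laws with $\PP^c(A)=\PP^d(A)$, and $\PP^c(B)=\PP^d(B)$ for every $B$ in this common $\sigma$-field; a fortiori for every $B\subset A$ lying in it.

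Next I would establish that the traces $\GG^c_{\SS^c_n}\vert_A$ are nondecreasing in $n$, so that $\mathcal{A}_0\eqdef\bigcup_{n\in\mathbb{N}}\GG^c_{\SS^c_n}\vert_A$ is an \emph{algebra} on $A$ (any finite subcollection of its members already lies in a single $\GG^c_{\SS^c_N}\vert_A$). This is the one genuinely computational step: for $B\in\GG^c_{\SS^c_n}$ with $B\subset A$ one checks $B\in\GG^c_{\SS^c_{n+1}}$ by writing, for each $t\in T$, $B\cap\{\SS^c_{n+1}\leq t\}=B\cap\{\SS^c_n\leq t\}\cap\{\SS^c_{n+1}\leq t\}$, which holds because $\SS^c_n\leq\SS^c_{n+1}$ \{$\PP^c$-a.s.\} on $A\supset B$; the right-hand side lies in $\GG^c_t$ since $B\in\GG^c_{\SS^c_n}$ and $\{\SS^c_{n+1}\leq t\}\in\GG^c_t$. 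By the first part of the lemma together with Lemma~\ref{lemma:endoftime} (applied to $\GG^c$ and the stopping times $(\SS^c_n)$, which access infinity \{a.s.\} on $A$) and the identity $\sigma_\Omega(\mathcal{L})\vert_A=\sigma_A(\mathcal{L}\vert_A)$, this gives $\GG^c_\infty\vert_A=\sigma_A(\mathcal{A}_0)$, and symmetrically $\GG^d_\infty\vert_A=\sigma_A(\mathcal{A}_0)$ (the union $\mathcal{A}_0$ is the same computed for $c$ or $d$, since $\GG^c_{\SS^c_n}=\GG^d_{\SS^d_n}$ and $A$ is measurable for both).

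Finally, the first step shows $\PP^c$ and $\PP^d$ agree on every member of $\mathcal{A}_0$, and they are finite measures of equal total mass $\PP^c(A)=\PP^d(A)$ on the trace space $A\in\mathcal{A}_0$. Since $\mathcal{A}_0$ is an algebra (in particular a $\pi$-system containing the whole trace space) generating $\GG^c_\infty\vert_A=\GG^d_\infty\vert_A$, the Dynkin $\pi$-$\lambda$ (uniqueness) theorem forces $\PP^c$ and $\PP^d$ to coincide on all of $\GG^c_\infty\vert_A$, i.e. $\PP^c\vert_{\GG^c_\infty}$ and $\PP^d\vert_{\GG^d_\infty}$ agree when traced on $A$, as required. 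The main obstacle I anticipate is the monotonicity step, and specifically the bookkeeping in the completed setting: there the inequality $\SS^c_n\leq\SS^c_{n+1}$ holds only $\PP^c$-a.s. on $A$, so the displayed set identity is valid only up to a $\PP^c$-null set, and one must lean on the completeness of $\GG^c$ to keep $B\cap\{\SS^c_{n+1}\leq t\}$ in $\GG^c_t$ at each level $t$.
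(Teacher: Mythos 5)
Your proposal is correct and is essentially the paper's own argument: stability under stopping applied at each $\SS_n$, Lemma~\ref{lemma:endoftime} to identify $\GG^h_\infty\vert_A$ with $\sigma_A\bigl(\bigcup_{n\in\mathbb{N}} \GG^h_{\SS^h_n}\vert_A\bigr)$ for $h\in\{c,d\}$ (which, the unions being equal, yields the first claim), and then agreement of the traced measures on this union together with the uniqueness theorem for finite measures agreeing on a generating $\pi$-system containing the whole trace space. The only difference is presentational: you spell out the verification that $\GG^c_{\SS^c_n}\vert_A\subset\GG^c_{\SS^c_{n+1}}\vert_A$ (including the null-set bookkeeping in the completed setting), a monotonicity step the paper asserts implicitly when it calls the union a nondecreasing union of $\sigma$-fields.
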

\begin{remark}
We mean to address here abstractly the situation when the two controls $c$ and $d$ agree for all times on $A$. 
\end{remark}
\begin{proof}
By stability under stopping, certainly $\PP^c\vert_{\mathcal{G}^c_{\SS_n^c}}$ agrees with $\PP^d\vert_{\mathcal{G}^d_{\SS_n^d}}$ for each $n\in \mathbb{N}$, while $(\SS_n^c=\SS_n^d)_{n\in \mathbb{N}}$ accesses infinity \{$\PP^c$-a.s. and $\PP^d$-a.s.\} on $A$. Then apply Lemma~\ref{lemma:endoftime} to obtain $\GG^c_\infty\vert_A=\sigma_A(\cup_{n\in \mathbb{N}}\mathcal{G}^c_{\SS_n^c}\vert_A)=\sigma_A(\cup_{n\in \mathbb{N}}\mathcal{G}^d_{\SS_n^d}\vert_A)=\GG^d_\infty\vert_A.$ If, moreover $A\in \GG^{c}_{\SS^c_n}$ for all $n\in \mathbb{N}$, then the traces of $\PP^c$ and $\PP^d$ on $A$ agree on $\cup_{n\in \mathbb{N}}\mathcal{G}^c_{\SS_n^c}\vert_A$. Provided in addition $(\SS_n^c)_{n\in\mathbb{N}}$ is \{$\PP^c$-a.s.\} nondecreasing on $A$, the latter union is a $\pi$-system (as a nondecreasing union of $\sigma$-fields, so even an algebra) on $A$. This, coupled with the fact that probability measures  which agree on a generating $\pi$-system are equal, yields the second claim.
\end{proof}

\begin{lemma}[Generalised conditional Fatou and Beppo-Levi lemmas]\label{lemma:beppoleviFatou}
Let $\GG\subset \FF$ be a sub-$\sigma$-field and $(f_n)_{n\geq 1}$ a sequence of $[-\infty,\infty]$-valued random elements, whose negative parts are dominated $\PP$-a.s by a single $\PP$-integrable random variable. Then, $\PP$-a.s., $$\EE[\liminf_{n\to\infty}f_n\vert\GG]\leq \liminf_{n\to\infty}\EE[f_n\vert\GG].$$ If, moreover, $(f_n)_{n\geq 1}$ is $\PP$-a.s. nondecreasing, then, $\PP$-a.s., $$\EE[\lim_{n\to\infty}f_n\vert \GG]=\lim_{n\to\infty}\EE[f_n\vert\GG].$$
\end{lemma}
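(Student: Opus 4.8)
The plan is to deduce both statements from the nonnegative conditional monotone convergence theorem, which I would establish first as the core building block. Concretely, I would show that if $0\le h_n\uparrow h$ $\PP$-a.s., then $\EE[h_n\vert\GG]\uparrow\EE[h\vert\GG]$ $\PP$-a.s. The conditional expectations $\EE[h_n\vert\GG]$ are $\PP$-a.s. nondecreasing in $n$ (conditional monotonicity), so their $\PP$-a.s. limit $L$ exists in $[0,\infty]$ and satisfies $L\le\EE[h\vert\GG]$. To get equality I would test against an arbitrary $A\in\GG$: two applications of the ordinary monotone convergence theorem give $\EE[\1_AL]=\lim_n\EE[\1_A\EE[h_n\vert\GG]]=\lim_n\EE[\1_Ah_n]=\EE[\1_Ah]=\EE[\1_A\EE[h\vert\GG]]$, and since $L$ and $\EE[h\vert\GG]$ are both $\GG$-measurable this forces $L=\EE[h\vert\GG]$ $\PP$-a.s.

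Next I would prove the Beppo--Levi (monotone) assertion. Writing $g$ for the dominating integrable random variable with $f_n^-\le g$ $\PP$-a.s., I would set $h_n\defto f_n+g$. Since $g\ge f_n^-$ we have $h_n\ge f_n^+\ge0$, and $(h_n)$ inherits the a.s. monotonicity of $(f_n)$, so $h_n\uparrow(\lim_nf_n)+g=:h$. Because $f_n^-\le g$ is integrable, $\EE[f_n^-\vert\GG]\le\EE[g\vert\GG]<\infty$ $\PP$-a.s., so each $\EE[f_n\vert\GG]$ is well defined (as $\EE[f_n^+\vert\GG]-\EE[f_n^-\vert\GG]$, possibly $+\infty$) and $\EE[h_n\vert\GG]=\EE[f_n\vert\GG]+\EE[g\vert\GG]$ $\PP$-a.s.; the same additivity holds for $h$ because $(\lim_nf_n)^-\le f_1^-\le g$. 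Applying the nonnegative result to $(h_n)$ and cancelling the $\PP$-a.s. finite quantity $\EE[g\vert\GG]$ throughout yields $\EE[f_n\vert\GG]\uparrow\EE[\lim_nf_n\vert\GG]$ $\PP$-a.s.

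Finally, for the Fatou inequality I would apply the monotone statement just obtained to $h_n\defto\inf_{k\ge n}f_k$. These are a.s. nondecreasing with $h_n\uparrow\liminf_nf_n$, and $h_n\ge-g$, so $h_n^-\le g$; hence the Beppo--Levi part gives $\EE[h_n\vert\GG]\uparrow\EE[\liminf_nf_n\vert\GG]$ $\PP$-a.s. On the other hand $h_n\le f_k$ for every $k\ge n$, so $\EE[h_n\vert\GG]\le\EE[f_k\vert\GG]$ $\PP$-a.s. and therefore $\EE[h_n\vert\GG]\le\inf_{k\ge n}\EE[f_k\vert\GG]$ $\PP$-a.s. (the infimum being over countably many indices). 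Letting $n\to\infty$ delivers $\EE[\liminf_nf_n\vert\GG]\le\liminf_n\EE[f_n\vert\GG]$ $\PP$-a.s.

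The individual steps are genuinely routine; the only points requiring care --- and hence the main obstacle --- are the bookkeeping in the extended-valued setting: ensuring that each $\EE[f_n\vert\GG]$ makes sense when $f_n^+$ need not be integrable, justifying the additivity and the cancellation of $\EE[g\vert\GG]$ on the set of full measure where it is finite, and keeping track that the countably many almost sure statements (monotonicity of the conditional expectations, the inequalities $\EE[h_n\vert\GG]\le\EE[f_k\vert\GG]$) may be intersected into a single $\PP$-a.s. conclusion.
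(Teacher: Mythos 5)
Your proof is correct and rests on exactly the same device as the paper's one-line proof: shift by the dominating integrable variable $g$ to reduce to the nonnegative conditional convergence theorems, then use linearity and cancel the $\PP$-a.s. finite quantity $\EE[g\vert\GG]$. The only differences are organizational --- you prove the nonnegative conditional monotone convergence theorem from scratch and derive the Fatou half from the already-generalized Beppo--Levi statement via $\inf_{k\ge n}f_k$, whereas the paper simply cites the conditional Fatou and Beppo--Levi lemmas for nonnegative sequences and applies each directly to $f_n+g$.
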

\begin{proof}
Just apply the conditional version of Fatou's Lemma (respectively of the Beppo-Levi Lemma) to the $\PP$-a.s. nonnegative (respectively nonnegative nondecreasing) sequence $f_n+g$ where $g$ is the $\PP$-integrable random variable which $\PP$-a.s. dominates the negative parts of the $f_n$. Then use linearity and subtract the $\PP$-a.s. finite quantity $\EE[g\vert \GG]$. 
\end{proof}
The following is a slight generalization of \cite[Theorem~A2]{striebel}. 

\begin{lemma}[Essential supremum and the upwards lattice property]\label{lemma:esssup}
Let $\GG\subset \FF$ be a sub-$\sigma$-field and $X=(X_\lambda)_{\lambda\in \Lambda}$ a collection of $[-\infty,\infty]$-valued random variables with integrable negative parts. Assume furthermore that for each $\{\epsilon,M\}\subset (0,\infty)$, $X$  has the ``$(\epsilon,M)$-upwards lattice property'', i.e. for all $\{\lambda,\lambda'\}\subset \Lambda$, one can find a $\lambda''\in \Lambda$ with $X_{\lambda''}\geq (M\land X_\lambda)\lor (M\land X_{\lambda'})-\epsilon$ $\PP$-a.s. Then, $\PP$-a.s.,
\begin{equation}\label{eq:essential_supremum}
\EE[\PPesssup_{\lambda\in\Lambda}X_\lambda\vert\mathcal{G}]=\PPesssup_{\lambda\in\Lambda}\EE[X_\lambda\vert \GG],
\end{equation}
where on the right-hand side the essential supremum may of course equally well be taken with respect to the measure $\PP\vert_\GG$.
\end{lemma}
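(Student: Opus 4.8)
The plan is to prove the two inequalities between $\EE[\Xi\vert\GG]$ and $\Theta$ separately, where I abbreviate $\Xi\eqdef\PPesssup_{\lambda\in\Lambda}X_\lambda$ and $\Theta\eqdef\PPesssup_{\lambda\in\Lambda}\EE[X_\lambda\vert\GG]$. Both are well defined: the common integrability of the negative parts makes each $\EE[X_\lambda\vert\GG]$ a well-defined $(-\infty,\infty]$-valued $\GG$-measurable map, and $\Xi^-\le X_{\lambda_0}^-$ for any fixed $\lambda_0\in\Lambda$, so $\EE[\Xi\vert\GG]$ makes sense too. The inequality $\EE[\Xi\vert\GG]\ge\Theta$ is immediate: for each $\lambda$ one has $X_\lambda\le\Xi$ $\PP$-a.s., whence $\EE[X_\lambda\vert\GG]\le\EE[\Xi\vert\GG]$ $\PP$-a.s. by monotonicity of conditional expectation; since $\EE[\Xi\vert\GG]$ is $\GG$-measurable and dominates each member of $(\EE[X_\lambda\vert\GG])_{\lambda\in\Lambda}$, it dominates their essential supremum $\Theta$.

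The substance is the reverse inequality $\EE[\Xi\vert\GG]\le\Theta$. First I would fix $M\in(0,\infty)$ and $\epsilon>0$ and work with the truncations $X^M_\lambda\eqdef M\land X_\lambda$. Using the standard fact that an essential supremum is attained along a countable subfamily, I choose $(\lambda_n)_{n\ge1}$ with $\sup_nX_{\lambda_n}=\Xi$ $\PP$-a.s.; since $x\mapsto M\land x$ is nondecreasing and continuous it commutes with suprema, so $\sup_nX^M_{\lambda_n}=M\land\Xi$ $\PP$-a.s. The crucial step is to convert this countable family into an (almost) nondecreasing one by exploiting the lattice property. Inductively I set $\mu_1\eqdef\lambda_1$ and, given $\mu_k$, apply the $(\epsilon 2^{-k},M)$-upwards lattice property to $\{\mu_k,\lambda_{k+1}\}$ to obtain $\mu_{k+1}$ with $X^M_{\mu_{k+1}}\ge X^M_{\mu_k}\lor X^M_{\lambda_{k+1}}-\epsilon2^{-k}$ $\PP$-a.s. (here one uses that $(M\land X_{\mu_k})\lor(M\land X_{\lambda_{k+1}})\le M$, so truncating $X_{\mu_{k+1}}$ at $M$ preserves the bound). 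Adding a summable correction $c_k\uparrow2\epsilon$ chosen so that $Z_k\eqdef X^M_{\mu_k}+c_k$ satisfies $Z_{k+1}\ge Z_k$ $\PP$-a.s., I obtain a genuinely nondecreasing sequence $(Z_k)$, bounded above by $M+2\epsilon$, with $Z_k^-\le X_{\lambda_1}^-$ integrable, and with $Z_\infty\eqdef\lim_kZ_k\ge M\land\Xi-2\epsilon$ $\PP$-a.s. (the latter because $X^M_{\mu_k}\ge X^M_{\lambda_k}-\epsilon2^{1-k}$ for every $k$, together with $\sup_nX^M_{\lambda_n}=M\land\Xi$).

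It then remains to pass to the limit. The conditional Beppo-Levi lemma (Lemma~\ref{lemma:beppoleviFatou}), applicable because $(Z_k)$ is nondecreasing with negative parts dominated by the single integrable variable $X_{\lambda_1}^-$, yields $\lim_k\EE[Z_k\vert\GG]=\EE[Z_\infty\vert\GG]\ge\EE[M\land\Xi\vert\GG]-2\epsilon$ $\PP$-a.s. On the other hand $Z_k\le X_{\mu_k}+c_k$, so $\EE[Z_k\vert\GG]\le\EE[X_{\mu_k}\vert\GG]+c_k\le\Theta+2\epsilon$ $\PP$-a.s.; combining gives $\EE[M\land\Xi\vert\GG]\le\Theta+4\epsilon$ $\PP$-a.s. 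Letting $\epsilon\downarrow0$ along a sequence gives $\EE[M\land\Xi\vert\GG]\le\Theta$ $\PP$-a.s., and finally letting $M\uparrow\infty$ through the integers and invoking conditional monotone convergence once more (now $M\land\Xi\uparrow\Xi$ with negative parts dominated by $\Xi^-\le X_{\lambda_0}^-$) yields $\EE[\Xi\vert\GG]\le\Theta$ $\PP$-a.s., as required.

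The main obstacle I anticipate is the middle step: the lattice property is only approximate and only controls the $M$-truncations, so one cannot simply quote the classical ``upward-directed family'' argument (which would replace the countable family by an honestly increasing one on the nose). The care needed is twofold: to check that truncation at $M$ is compatible with the lattice inequality, and to arrange the error bookkeeping -- the summable corrections $c_k$ and the $\epsilon$-penalties in the domination $X^M_{\mu_k}\ge X^M_{\lambda_k}-\epsilon2^{1-k}$ -- so that a single integrable function dominates all the negative parts (a hypothesis of Lemma~\ref{lemma:beppoleviFatou}) and the accumulated errors remain $O(\epsilon)$ uniformly, thereby permitting the two limits $\epsilon\downarrow0$ and $M\uparrow\infty$ to be taken in turn.
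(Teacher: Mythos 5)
Your proof is correct and follows essentially the same route as the paper's: both reduce to truncations at a finite level, recursively apply the approximate lattice property to a countable attaining sequence with geometrically summable errors, invoke Lemma~\ref{lemma:beppoleviFatou}, and then remove the truncation via conditional monotone convergence. The only cosmetic difference is that you add summable constants $c_k$ to make the sequence genuinely nondecreasing and use the Beppo--Levi half of that lemma, where the paper applies the Fatou half directly to the $\liminf$ of the uncorrected sequence.
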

\begin{proof}
It is assumed without loss of generality that $\Lambda\ne\emptyset$, whence remark that $\PPesssup_{\lambda\in\Lambda}X_\lambda$ has an integrable negative part. Then the inequality 
$$\EE[\PPesssup_{\lambda\in\Lambda}X_\lambda\vert\mathcal{G}]\geq\PPesssup_{\lambda\in\Lambda}\EE[X_\lambda\vert \GG]
$$
is immediate.

Conversely, we show first that it is sufficient to establish the reverse inequality for each truncated $(X_\lambda\land N)_{\lambda\in\Lambda}$ family, as $N$ runs over $\mathbb{N}$. Indeed, suppose we have $\PP$-a.s. $$\EE[\PPesssup_{\lambda\in\Lambda}X_\lambda\land N\vert\mathcal{G}]\leq \PPesssup_{\lambda\in\Lambda}\EE[X_\lambda\land N\vert \GG]$$ for all $N\in \mathbb{N}$. Then \emph{a fortiori} $\PP$-a.s. for all $N\in \mathbb{N}$, $$\EE[\PPesssup_{\lambda\in\Lambda}X_\lambda\land N\vert\mathcal{G}]\leq \PPesssup_{\lambda\in\Lambda}\EE[X_\lambda\vert \GG]$$ and generalised conditional monotone convergence (Lemma~\ref{lemma:beppoleviFatou}) allows to pass to the limit: 
$$\EE[\lim_{N\to\infty}\PPesssup_{\lambda\in\Lambda}X_\lambda\land N\vert\mathcal{G}]\leq \PPesssup_{\lambda\in\Lambda}\EE[X_\lambda\vert \GG]$$ $\PP$-a.s. But clearly, $\PP$-a.s.,  $\lim_{N\to\infty}\PPesssup_{\lambda\in\Lambda}X_\lambda\land N\geq \PPesssup_{\lambda\in\Lambda}X_\lambda$, since for all $\lambda\in \Lambda$, we have, $\PP$-a.s., $X_\lambda\leq \lim_{N\to\infty}X_\lambda\land N\leq \lim_{N\to\infty}\PPesssup_{\mu\in\Lambda}X_\mu\land N$. 

Thus it will indeed be sufficient to establish the ``$\leq$-inequality'' in \eqref{eq:essential_supremum} for the truncated families, and so it is assumed without loss of generality (take $M=N$) that $X$ enjoys, for each $\epsilon\in (0,\infty)$, the ``$\epsilon$-upwards lattice property'': for all $\{\lambda,\lambda'\}\subset \Lambda$, one can find a $\lambda''\in \Lambda$ with $X_{\lambda''}\geq X_\lambda\lor X_{\lambda'}-\epsilon$ $\PP$-a.s.

Then take $(\lambda_n)_{n\geq 1}\subset \Lambda$ such that, $\PP$-a.s., $\PPesssup_{\lambda\in\Lambda}X_\lambda=\sup_{n\geq 1}X_{\lambda_n}$ and fix $\delta>0$. Recursively define $(\lambda_n')_{n\geq 1}\subset\Lambda$ so that,  $X_{\lambda_1'}=X_{\lambda_1}$ while for $n\in \mathbb{N}$, $\PP$-a.s., $X_{\lambda_{n+1}'}\geq X_{\lambda_n'} \lor X_{\lambda_{n+1}}-\delta/2^{n}$. Prove by induction that $\PP$-a.s. for all $n\in\mathbb{N}$, $X_{\lambda_n'}\geq \max_{1\leq k\leq n}(X_{\lambda_k}-\sum_{l=1}^{n-1}\delta/2^l)$, so that $\liminf_{n\to\infty}X_{\lambda'_n}\geq \sup_{n\in \mathbb{N}}X_{\lambda_n}-\delta$, $\PP$-a.s. Note next that the negative parts of $(X_{\lambda'_n})_{n\in \mathbb{N}}$ are dominated $\PP$-a.s. by a single $\PP$-integrable random variable. By the generalised conditional Fatou's lemma (Lemma~\ref{lemma:beppoleviFatou}) we therefore obtain, $\PP$-a.s., $\PPesssup_{\lambda\in\Lambda}\EE[X_\lambda\vert \GG]\geq \liminf_{n\to\infty} \EE[X_{\lambda'_n}\vert\GG]\geq \EE[\liminf_{n\to\infty}X_{\lambda'_n}\vert\GG]\geq \EE[\PPesssup_{\lambda\in\Lambda}X_\lambda\vert\mathcal{G}]-\delta$. Finally, let $\delta$ descend to $0$ (over some sequence descending to $0$). 
\end{proof}

\begin{lemma}\label{lemma:adap-predictable}
\leavevmode
\begin{enumerate}[(i)]
\item Let $\AA$ and $\BB$ be two $\sigma$-fields on $\Omega$, $F\subset \Omega$, $V:\Omega\to [-\infty,\infty]$, such that $\AA\vert_F=\BB\vert_{F}$ with $F\in \AA\cap \BB$. Then 
 $V\mathbbm{1}_F$ is $\BB$-measurable if $V$ is $\AA$-measurable.
\item Let $\AA$ and $\BB$ be two filtrations in continuous ($T=[0,\infty)$) 
 or discrete ($T=\mathbb{N}_0$)  time on $\Omega$, $V$ be a real-valued process on $\Omega$ with time domain $T$, $P$ be a map $P:\Omega\to T\cup\{\infty\}$, and $A\subset \Omega$. Assume that for all $t\in T$, 
\begin{enumerate}[(I)]
\item $\AA_t\vert_{\{t\leq P\}}=\BB_t\vert_{\{t\leq P\}}$, $\AA_t\vert_{\{t> P\}\cap A}=\BB_t\vert_{\{t> P\}\cap A}$ 

and 
\item  the events $\{t\leq P\}$, $A\cap \{P<t\}$ belong to $\BB_t\cap \AA_t$. 
\end{enumerate}
Then  $V\mathbbm{1}_{\llbracket 0,P\rrbracket}$ and  $V\mathbbm{1}_{\llparenthesis P,\infty\rrparenthesis}\mathbbm{1}_A$ are $\BB$-adapted (respectively, predictable) if $V$ is $\AA$-adapted (respectively, predictable). 
\end{enumerate}
\end{lemma}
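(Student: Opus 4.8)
The plan is to reduce both parts to the elementary part~(i), which I would prove first and directly. For a Borel set $C\subset[-\infty,\infty]$ one has $(V\mathbbm{1}_F)^{-1}(C)=\{V\in C\}\cap F$ when $0\notin C$, and $(V\mathbbm{1}_F)^{-1}(C)=(\{V\in C\}\cap F)\cup(\Omega\setminus F)$ when $0\in C$. Since $\{V\in C\}\in\AA$ and $F\in\AA$, the set $\{V\in C\}\cap F$ belongs to $\AA\vert_F=\BB\vert_F$, hence equals $B\cap F$ for some $B\in\BB$; as $F\in\BB$ this lies in $\BB$, and adjoining $\Omega\setminus F\in\BB$ in the second case keeps us in $\BB$. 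Thus $V\mathbbm{1}_F$ is $\BB$-measurable.

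The adapted case of~(ii) is then immediate by freezing time. Fixing $t\in T$, I would apply~(i) to the pair $(\AA_t,\BB_t)$: with $F=\{t\le P\}$, hypothesis~(II) gives $F\in\AA_t\cap\BB_t$ and~(I) gives $\AA_t\vert_F=\BB_t\vert_F$, so from the $\AA_t$-measurability of $V_t$ we get that $V_t\mathbbm{1}_{\{t\le P\}}$ is $\BB_t$-measurable; with $F=A\cap\{P<t\}$ the second identity of~(I) together with~(II) supply the same hypotheses and yield the $\BB_t$-measurability of $V_t\mathbbm{1}_{\{P<t\}}\mathbbm{1}_A$. Letting $t$ vary gives $\BB$-adaptedness of both processes.

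For the predictable case I would view the two processes as maps on $\Omega\times T$ and pass to the predictable $\sigma$-fields $\mathcal{P}(\AA)$ and $\mathcal{P}(\BB)$, generated in continuous time by the stochastic rectangles $\{0\}\times F_0$ ($F_0\in\AA_0$) and $(s,t]\times F$ ($F\in\AA_s$). The entire statement then follows from~(i), applied now with ambient $\sigma$-fields $\mathcal{P}(\AA),\mathcal{P}(\BB)$ and with $F$ taken to be $\llbracket 0,P\rrbracket$ and then $G:=\llparenthesis P,\infty\rrparenthesis\cap(A\times T)$, provided I verify that each of these sets lies in $\mathcal{P}(\AA)\cap\mathcal{P}(\BB)$ and that $\mathcal{P}(\AA)$ and $\mathcal{P}(\BB)$ agree when traced on it. Membership is checked directly: $u\mapsto\mathbbm{1}_{\{u\le P\}}$ is left-continuous and, by~(II), adapted to both filtrations, so $\llbracket 0,P\rrbracket$ is predictable for each; and $G=\bigcup_{q\in\mathbb{Q}\cap[0,\infty)}(A\cap\{P<q\})\times(q,\infty)$ is a countable union of predictable rectangles by~(II).

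It remains to match the traces on generators and invoke Dynkin's lemma; this is the heart of the matter and the place I expect the real difficulty. On $\llbracket 0,P\rrbracket$ things are benign: given $F\in\AA_s$, the first identity of~(I) at time $s$ furnishes $F'\in\BB_s$ with $F\cap\{s\le P\}=F'\cap\{s\le P\}$, and because $\{u\le P\}\subset\{s\le P\}$ for all $u>s$ this single $F'$ already gives $((s,t]\times F)\cap\llbracket 0,P\rrbracket=((s,t]\times F')\cap\llbracket 0,P\rrbracket$, an element of $\mathcal{P}(\BB)\vert_{\llbracket 0,P\rrbracket}$. On $G$ this monotonicity argument fails, and one must localize in time: inserting a rational $q$ between $P$ and $u$ gives $((s,t]\times F)\cap G=\bigcup_{q\in\mathbb{Q}\cap[s,t)}(F\cap A\cap\{P<q\})\times(q,t]$, after which the second identity of~(I), now applied at time $q$, replaces $F\cap(A\cap\{P<q\})$ by $F'_q\cap(A\cap\{P<q\})$ with $F'_q\in\BB_q$, exhibiting the trace as a $\mathcal{P}(\BB)$-set. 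The discrete-time case runs by the same scheme. The essential obstacle is exactly that~(I) compares the two filtrations only slice-by-slice on the moving sets $\{t\le P\}$ and $A\cap\{P<t\}$, so that no single $\BB_s$-set can be expected to work across all times; the devices that overcome this are the monotonicity of $\{u\le P\}$ for the first interval and the rational localization to the instant $q$ for the second.
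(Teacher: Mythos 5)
Your continuous-time argument is correct, but it takes a genuinely different route from the paper's. The paper treats part (i) and the adapted case exactly as you do (it merely declares them clear), and then settles predictability by a \emph{functional} monotone class argument: the class of processes $V$ for which $V\mathbbm{1}_{\llbracket 0,P\rrbracket}$ and $V\mathbbm{1}_{\llparenthesis P,\infty\rrparenthesis}\mathbbm{1}_A$ are $\BB$-predictable contains the multiplicative class of all left-continuous $\AA$-adapted processes (for such $V$ the two products are again left-continuous, and they are $\BB$-adapted by part (i) applied at each fixed time, hence $\BB$-predictable), and the Functional Monotone Class Theorem then extends the claim to all $\AA$-predictable processes. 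You instead argue at the level of the predictable $\sigma$-fields as set systems on $\Omega\times T$: you verify that $\llbracket 0,P\rrbracket$ and $G=\llparenthesis P,\infty\rrparenthesis\cap(A\times T)$ belong to $\mathcal{P}(\AA)\cap\mathcal{P}(\BB)$, that the traces of $\mathcal{P}(\AA)$ and $\mathcal{P}(\BB)$ on each of these sets coincide, and then invoke part (i) with ambient space $\Omega\times T$. The two generator computations are exactly right: the monotonicity $\{u\leq P\}\subset\{s\leq P\}$ for $u>s$ lets a single $\BB_s$-set work on $\llbracket 0,P\rrbracket$, while the rational decomposition $((s,t]\times F)\cap G=\bigcup_{q}(F\cap A\cap\{P<q\})\times(q,t]$ is precisely the correct fix where that monotonicity fails, since it allows hypothesis (I) to be applied at the (deterministic) instant $q$. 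Both proofs are sound; the paper's is shorter given the FMCT machinery, whereas yours is more elementary (generators plus the observation that $\{C:C\cap F\in\mathcal{P}(\BB)\vert_F\}$ is a $\sigma$-field) and yields a stronger, reusable set-level fact, namely $\mathcal{P}(\AA)\vert_F=\mathcal{P}(\BB)\vert_F$ for both stochastic intervals.

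One caveat, which applies equally to the paper's one-line dismissal of discrete time: your closing claim that ``the discrete-time case runs by the same scheme'' does not literally hold. Under the standard convention that discrete-time predictability of $V$ means $V_n\in\AA_{n-1}$-measurability, membership of $\llbracket 0,P\rrbracket$ in $\mathcal{P}(\AA)$ requires $\{n\leq P\}\in\AA_{n-1}$, whereas hypothesis (II) only places $\{n\leq P\}$ in $\AA_n$; indeed with $\AA=\BB$, $\AA_0$ trivial, $\AA_n=2^\Omega$ for $n\geq 1$, $P$ a non-constant $\{0,1\}$-valued map and $V\equiv 1$, the conclusion itself fails. So in discrete time the predictable assertion must be read with a different convention (e.g.\ as adaptedness); since the lemma is only invoked in the paper for $T=[0,\infty)$ (Example~\ref{ex:cts_field}), this is harmless for both your proof and the paper's.
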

\begin{proof}
The first part is clear. The second part in discrete time follows at once. In continuous time, adaptedness also follows at once from the first part. For predictability, one notes that the class of processes $V$ for which $V\mathbbm{1}_{\llbracket 0,P\rrbracket}$ and  $V\mathbbm{1}_{\llparenthesis P,\infty\rrparenthesis}\mathbbm{1}_A$  are $\BB$-predictable  contains the multiplicative class of all left-continuous $\AA$-adapted processes. The Functional Monotone Class Theorem allows us to extend this claim to all $\AA$-predictable processes. 
\end{proof}

\end{document}